\newtheorem{thm}{Theorem}
\newtheorem{cor}[thm]{Corollary}
\newtheorem{defi}[thm]{Definition}
\newtheorem{rem}[thm]{Remark}
\newtheorem{nota}[thm]{Notation}
\newtheorem{princ}[thm]{Principle}
\newtheorem{theme}[thm]{Theme}
\numberwithin{thm}{section}
\newcommand\be{\begin{equation}}
\newcommand\ee{\end{equation}}
\newbox\gnBoxA
\newdimen\gnCornerHgt
\newdimen\gnArgHgt
\def\Godelnum #1{%
	\setbox\gnBoxA=\hbox{$#1$}%
	\gnArgHgt=\ht\gnBoxA%
	\ifnum \gnArgHgt<\gnCornerHgt
		\gnArgHgt=0pt%
	\else
		\advance \gnArgHgt by -\gnCornerHgt%
	\fi
	\raise\gnArgHgt\hbox{$\ulcorner$} \box\gnBoxA %
		\raise\gnArgHgt\hbox{$\urcorner$}}
\def\bdefi{\begin{defi}\rm}
\def\edefi{\end{defi}}
\def\bnota{\begin{nota}\rm}
\def\enota{\end{nota}}
\def\brem{\begin{rem}\rm}
\def\erem{\end{rem}}
\def\QFAC{\textup{QF-AC}}
\def\QFACP{\big(\textup{QF-AC}^{1,0}\big)^{\st}}
\def\RCA{\textup{RCA}}
\def\WKL{\textup{WKL}}
\def\bye{\end{document}}
\def\N{{\mathbb  N}}
\def\R{{\mathbb  R}}
\def\FAN{\textup{FAN}}
\def\R{{\mathbb{R}}}
\def\({\textup{(}}
\def\){\textup{)}}
\def\st{\textup{st}}
\def\asa{\leftrightarrow}
\def\di{\rightarrow}
\def\eps{\varepsilon}
\def\US{\textit{US}}
\def\M{\mathcal{M}}
\def\ACA{\textup{ACA}}
\def\paai{\Pi_{1}^{0}\textup{-TRANS}}
\def\Ph{\mathbb{ph}}
\def\Ps{\mathbb{ps}}
\newbox\gnBoxA
\newdimen\gnCornerHgt
\newdimen\gnArgHgt
\def\bdefi{\begin{defi}\rm}
\def\edefi{\end{defi}}
\def\bnota{\begin{nota}\rm}
\def\enota{\end{nota}}
\def\brem{\begin{rem}\rm}
\def\erem{\end{rem}}
\def\rel{\sqsubseteq}
\def\sler{\sqsupset}
\def\RCA{\textup{RCA}}
\def\RCAo{\textup{RCA}_{0}^{\omega}}
\def\RCAO{\textup{RCA}_{0}^{\Omega}}
\def\WKL{\textup{WKL}}
\def\bye{\end{document}}
\def\N{{\mathbb  N}}
\def\R{{\mathbb  R}}
\def\FAN{\textup{FAN}}
\def\UFAN{\textup{UFAN}}
\def\R{{\mathbb{R}}}
\def\({\textup{(}}
\def\){\textup{)}}
\def\asa{\leftrightarrow}
\def\di{\rightarrow}
\def\eps{\varepsilon}
\def\ACA{\textup{ACA}}
\numberwithin{equation}{section}
\begin{document}
\title[Reverse Mathematics of Brouwer's Continuity Theorem]{Reverse Mathematics of Brouwer's Continuity Theorem and related principles}
\author{Sam Sanders
}



\begin{abstract}
In intuitionistic mathematics, the \emph{Brouwer Continuity Theorem} states that all total real functions are (uniformly) continuous on the unit interval.  
We study this theorem and related principles from the point of view of Reverse Mathematics over a base theory accommodating higher types and Nonstandard Analysis.
With regard to the bigger picture, Reverse Mathematics provides a classification of theorems of ordinary mathematics 
based on computability.  Our aim is to provide an \emph{alternative} classification of theorems based on the central tenet of Feferman's \emph{Explicit Mathematics}, namely that \emph{a proof of existence of an object yields a procedure to compute 
said object}.  Our classification gives rise to the \emph{Explicit Mathematics theme} (EMT).   Intuitively speaking, the EMT states that
a standard object with certain properties can be computed by a functional if and only if this object exists classically with these same standard \emph{and nonstandard} properties.  
Hence, we establish the EMT for a series of intuitionistic principles in this paper.        
\end{abstract}

\maketitle


\section{Introduction: Intuitionistic, Explicit, and Reverse Mathematics}
\subsection{Intuitionistic and Reverse Mathematics}
At the beginning of the twentieth century, L.E.J.\ Brouwer proposed \emph{intuitionism}, an anti-platonist philosophy of mathematics (\cite{brouw}).  
Brouwer was motivated by the belief that mathematics is the result of human mental activity, not the discovery of pre-existent entities in some independent reality.  His philosophical ideas led him to reject the principle of excluded middle as a valid logical law (\cite{vajuju}*{p.\ 334}). 
Brouwer also initiated the development of \emph{intuitionistic mathematics}, a type of constructive mathematics motivated by his ideas and seemingly incompatible with mainstream (or `classical') mathematics.  
In particular, Brouwer's \emph{Continuity Theorem} (\cite{vajuju}*{Theorem 3, p.~463}) states that all total $[0,1]\di \R$ functions are (uniformly) continuous.               

\medskip  

Recent results in Reverse Mathematics (\cite{aloneatlast3}*{Theorem 43}) consider the relation between nonstandard continuity\footnote{By `nonstandard continuity', we mean the definition from 
Nonstandard Analysis involving the `infinitely close' predicate `$\approx$' as in \cite{stroyan}*{\S5.1, p.\ 71}.  See Section \ref{norema} below for the exact definition.} and $\eps$-$\delta$-continuity.  
A natural question is then:  
\begin{center}
\emph{In case all total $[0,1]\di \R$-functions are $\eps$-$\delta$-continuous, are they then also \textbf{nonstandard} continuous?  What extra nonstandard axioms are needed?}  (Q).
\end{center}  
The short answer to (Q) is \emph{Yes, see Theorem~\ref{allisone}}.
The long answer takes up the rest of this paper: We shall develop Reverse Mathematics (RM for short; See Section~\ref{theme} for the latter) for Brouwer's continuity theorem and related principles over a conservative extension of the `usual' base theory $\RCA_{0}$ involving higher types and Nonstandard Analysis.  This extended base theory, called $\RCAO$, is based on Nelson's \emph{internal set theory} (\cite{wownelly}), as discussed in Section \ref{pdef}.  
The aforementioned development of RM takes place in Section~\ref{main}-\ref{strongEMT} and proceeds along the lines of the \emph{Explicit Mathematics Theme} (EMT for short), discussed in the next section.  

\medskip

As an aside, our study gives rise to several very natural splitting results (See Section~\ref{fans}) \emph{and} the discovery of a \emph{natural higher-order} statement implicit in a second-order theorem concerning continuous functions (See Section \ref{LOCO}).  This implicit presence is caused by the special nature of the RM-definition of continuity.  
\subsection{The theme from Explicit Mathematics}\label{theme}
Reverse Mathematics is a program in the foundations of mathematics initiated by Friedman (\cite{fried, fried2}), and developed extensively by Simpson and others (See \cite{simpson1, simpson2} for an overview and introduction).  
The aim is to find the axioms necessary to prove a given theorem of ordinary$^{\ref{footsie}}$ mathematics, assuming the `base theory' $\RCA_{0}$, a weak system of computable mathematics.  

\medskip

In particular, RM can be viewed as a classification of theorems of ordinary\footnote{The term `ordinary mathematics' refers to mathematics concerned with countable and separable objects, as discussed in \cite{simpson2}*{I.1}.\label{footsie}} mathematics from the point of view of \emph{computability} (See e.g.\ \cite{simpson2}*{I.3.4}).  A natural question is if there are \emph{other interesting ways} of classifying these theorems;  
In this paper, we shall discuss a classification based on the core tenet of Feferman's \emph{Explicit Mathematics}  (See \cite{feferman2,feferman3,feferman4, fefermaninf} and \cite{firstHORM}*{\S1.3}), which is as follows: 
\begin{center}
\emph{a proof of existence of an object yields a procedure to compute said object}.   
\end{center}
Hence, rather than enforcing the core tenet of Explicit Mathematics, we shall classify theorems based on `how much' extra is needed to compute objects claimed to exist by theorems of ordinary mathematics.  
This classification will be developed along the lines of the following general theme, first introduced in \cite{firstHORM}.  
\begin{theme}[The theme from Explicit Mathematics]\rm
Consider a theorem of mathematics (in the language of $\RCAO$) of the form:
\[
T^{\st}\equiv(\forall^{\st}x^{\sigma})(A^{\st}(x)\di (\exists^{\st} y^{\tau})B^{\st}(x,y)).
\]
The \emph{nonstandard} version of $T^{\st}$ is the statement:
\be\label{mf}\tag{$T^{*}$}
(\forall^{\st}x^{\sigma})(A^{\st}(x)\di (\exists^{\st} y^{\tau})B(x,y)), 
\ee
where $B^{\st}$ is `transferred' to $B$, i.e.\ the standardness predicate `st' is omitted.  
Furthermore, the \emph{uniform} version of $T$, is 
\be\tag{$UT$}
(\exists \Phi^{\sigma\di \tau})(\forall x^{\sigma})(A (x)\di B (x,\Phi(x))).
\ee
The \emph{Explicit Mathematics Theme} (EMT) is the observation that for many theorems $T$ as above, the base theory proves $T^{*}\asa UT$.  
\end{theme}
As suggested by its name, the EMT is inspired by the foundational program {Explicit Mathematics}.  
The name `EMT' was chosen because it expresses a uniform way of characterising the computability from the central tenet of Explicit Mathematics, namely that the mere non-uniform \emph{existence} of an object $y$ as in $T^{*}$, is 
equivalent to $y$ \emph{being computable via a functional} as in $UT$.  

\medskip

In this paper, we will establish EMT for a number of intuitionistic principles.  
In light of \cite{kohlenbach2}*{p.\ 293-294}, the \emph{fan functional} constitutes a natural starting point, discussed in Section \ref{main}.  
An obvious next step is the study, in Section \ref{UB41}, of the uniform boundedness principles from \cite{kohlenbach4}*{Ch.\ 12}, which are generalisations of the fan functional more suitable for proof mining.  
In turn, in Section \ref{SB}, we study continuity principles which are weaker than the fan functional.  
Finally, in Section~\ref{strongEMT}, we discuss the RM-classification of Brouwer's continuity theorem.  The latter study gives rise to very natural splitting results, as discussed in Section~\ref{fans}.         

\medskip

While studying principles weaker than the fan functional in Section \ref{SB}, it becomes clear that the \emph{idealization} axiom I from $\RCAO$ is needed (which is exceptional in this context).  
Furthermore, the axiom $\textup{I}$ gives rise to \emph{another} nonstandard version $T^{**}$, also equivalent to $UT$ as discussed in Section \ref{layola}.   

\medskip

In conclusion, we discuss the possible foundational significance of the EMT.  
\begin{enumerate} 
\item Central to the EMT is that statements involving \emph{higher-type} objects like $UT$ are equivalent to statements $T^{*}$ involving only \emph{lower-type} nonstandard objects.  
In this light, it seems incoherent to claim that higher-type objects are somehow `more real' than nonstandard ones (or vice versa).  
Furthermore, the EMT suggests that higher-order RM is actually implicit in Friedman-Simpson RM, as Nonstandard Analysis is used in the latter:  See \cites{tahaar, tanaka1, keisler1, yo1, yokoyama2, yokoyama3, sayo, avi3, aloneatlast3}.  
More directly, the EMT even yields an example of a uniform statement implicit in a second-order statement concerning continuous functions (See Remark~\ref{loofer}).        
\item In general, to prove $T^{*}\di UT$, one defines a functional $\Psi(\cdot, M)$ of (rather) elementary complexity, but involving an infinite number $M$.  
Assuming $T^{*}$, this functional is $\Omega$-invariant (See Definition \ref{homega}) and the axiom $\Omega$-CA from $\RCAO$ provides the required standard functional for $UT$.  The functional $\Psi(\cdot, M)$ is the \emph{canonical approximation} of the one from $UT$.  
As discussed in \cite{firstHORM}, these results can be viewed as a contribution to Hilbert's program for finitistic mathematics, as infinitary objects (the functional from $UT$) are decomposed into elementary computable objects.    
By the results in the next sections and in \cite{firstHORM}, such decomposition is available for both classical and intuitionistic principles, i.e.\ a `finitistic multiverse' presents itself.  
\item Fujiwara and Kohlenbach have established the connection (and even equivalence in some cases) between (classical) uniform existence as in $UT$ and intuitionistic provability (\cites{fuji1,fuji2}).  The EMT suggests that $T^{*}$ constitutes another way of capturing intuitionistic provability (in certain cases).  
\end{enumerate}
Finally, we urge the reader to first consult Remarks \ref{ohdennenboom} and \ref{flack} so as to clear up any common prejudice regarding Nelson's framework.

\section{A base theory for Reverse Mathematics}\label{pdef}
In this section, we introduce the base theory $\RCAO$ in which we will work.  
We discuss some basic results and introduce some notation.
\subsection{The system $\RCAO$}
In two words, $\RCAO$ is a conservative extension of Kohlenbach's base theory $\RCAo$ from \cite{kohlenbach2} with certain axioms from Nelson's \emph{Internal Set Theory} (\cite{wownelly}) based on the approach from \cites{brie,bennosam}.    
This conservation result is proved in \cite{bennosam}, while partial results are implicit in \cite{brie}.  The system $\RCAo$ is a conservative extension of $\RCA_{0}$ for the second-order language by \cite{kohlenbach2}*{Prop.\ 3.1}.  

\medskip

In Nelson's \emph{syntactic} approach to Nonstandard Analysis (\cite{wownelly}), as opposed to Robinson's semantic one (\cite{robinson1}), a new predicate `st($x$)', read as `$x$ is standard' is added to the language of ZFC.  
The notations $(\forall^{\st}x)$ and $(\exists^{\st}y)$ are short for $(\forall x)(\st(x)\di \dots)$ and $(\exists y)(\st(y)\wedge \dots)$.
The three axioms \emph{Idealization}, \emph{Standard Part}, and \emph{Transfer} govern the new predicate `st'  and give rise to a conservative extension of ZFC.   
Nelson's approach has been studied in the context of higher-type arithmetic in e.g.\ \cite{brie, bennosam, avi3}.

\medskip

Following Nelson's approach in arithmetic, we define $\RCAO$ as the system 
\be\label{poli}
\textup{E-PRA}_{\st}^{\omega*}+\textup{QF-AC}^{1,0} +\textup{I}+\textup{HAC}_{\textup{int}}+ \textup{PF-TP}_{\forall} 
\ee
from \cite{bennosam}*{\S3.2-3.3}.  Nelson's idealization axiom I is available in $\RCAO$, but to guarantee that the latter is a conservative extension of $\RCAo$, Nelson's axiom \emph{Standard part} must be limited to $\Omega$-CA defined below (which derives from HAC$_{\textup{int}}$), 
while Nelson's axiom \emph{Transfer} has to be limited to universal formulas \emph{without} parameters, as in PF-TP$_{\forall}$.  We have the following theorem.  
\begin{thm}
The system \textup{E-PRA$_{\st}^{\omega*}+\textup{HAC}_{\textup{int}}+\textup{I}+ \textup{PF-TP}_{\forall}$} is a conservative extension of \textup{E-PRA}$^{\omega}$.  
The system $\RCAO$ is a $\Pi_{2}^{0}$-conservative extension of $\textup{PRA}$.  
\end{thm}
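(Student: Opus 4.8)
The plan is to establish the first assertion by a \emph{functional interpretation} tailored to Nelson's predicate $\st$, and then to read off the second assertion by composing with the known conservation of $\textup{E-PRA}^{\omega}$ over $\textup{PRA}$. Concretely, I would use the Herbrandised (nonstandard) Dialectica interpretation of \cite{brie}, which assigns to every formula $\varphi$ of the language with $\st$ an \emph{internal} formula $\varphi^{S}\equiv (\exists \underline{x})(\forall \underline{y})\,\varphi_{S}(\underline{x},\underline{y})$ whose interpretation variables range over \emph{finite sequences}; the clauses for $(\exists^{\st}z)$ and $(\forall^{\st}z)$ treat ``standardness'' as quantification over an unspecified finite tuple of candidates, which is precisely why the target theory must carry the sequence types signalled by the superscript ``$*$'' in $\textup{E-PRA}_{\st}^{\omega*}$. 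Since $\textup{E-PRA}^{\omega}$ is classical, I would first compose with a negative translation (which commutes with the $\st$-free clauses and leaves the predicate $\st$ untouched), or equivalently invoke the Shoenfield-style variant of the interpretation.

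The core is the \textbf{soundness theorem}: if $\textup{E-PRA}_{\st}^{\omega*}+\textup{HAC}_{\textup{int}}+\textup{I}+\textup{PF-TP}_{\forall}\vdash \varphi$, then closed terms $\underline{t}$ of $\textup{E-PRA}^{\omega*}$ can be extracted from the derivation with $\textup{E-PRA}^{\omega*}\vdash (\forall \underline{y})\,\varphi_{S}(\underline{t},\underline{y})$. This is checked axiom by axiom. The external induction axiom and the logical rules are routine; $\textup{HAC}_{\textup{int}}$ is, by design, interpreted by its own Skolem functions exactly as ordinary choice is handled by the Dialectica interpretation; the idealization axiom $\textup{I}$ is realized by \emph{elementary} terms, because once ``standard'' has become ``ranges over a finite tuple'', the finite-intersection hypothesis of $\textup{I}$ supplies the required witness directly, with no recursion needed; and $\textup{PF-TP}_{\forall}$ is sound because a parameter-free universal internal sentence is, under the interpretation, provably equivalent to its own relativisation to $\st$. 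One then proves the \textbf{characterisation lemma}: for an \emph{internal} $\varphi$, the formula $\varphi^{S}$ is provably equivalent in $\textup{E-PRA}^{\omega*}$ to the ordinary Dialectica interpretation $\varphi^{D}$. Consequently, if the nonstandard system proves an internal $\varphi$, the usual soundness of the Dialectica interpretation for $\textup{E-PRA}^{\omega*}$ yields $\textup{E-PRA}^{\omega*}\vdash \varphi$, and since the sequence types are eliminable (so $\textup{E-PRA}^{\omega*}$ is conservative over $\textup{E-PRA}^{\omega}$), we conclude $\textup{E-PRA}^{\omega}\vdash \varphi$, which is the first assertion.

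For the second assertion, observe that a $\Pi_{2}^{0}$ sentence is internal; applying the argument above also to the extra axiom $\textup{QF-AC}^{1,0}$ of $\RCAO$ (whose interpretation is again just its Skolemisation and costs no new functionals) shows that every $\Pi_{2}^{0}$ theorem of $\RCAO$ is a theorem of $\textup{E-PRA}^{\omega}$. By the classical result underlying \cite{kohlenbach2}, the Dialectica interpretation of $\textup{E-PRA}^{\omega}$ uses only primitive recursive functionals, so on a $\Pi_{2}^{0}$ sentence it produces a $\textup{PRA}$-proof; chaining the two steps gives the $\Pi_{2}^{0}$-conservativity of $\RCAO$ over $\textup{PRA}$.

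I expect the \textbf{main obstacle} to be the soundness verification for $\textup{I}$ and $\textup{HAC}_{\textup{int}}$ simultaneously --- arranging the interpretation so that it closes under both \emph{without leaving the primitive recursive functionals}, since any appeal to bar recursion would destroy the $\textup{PRA}$ bound in the second assertion --- together with the delicate point that $\textup{PF-TP}_{\forall}$ must be handled so as to stay sound even though \emph{full}, parametrised transfer is \emph{not} conservative (it imports $\textup{PA}$-strength and hence $\Pi_{2}^{0}$-statements unprovable in $\textup{PRA}$); the argument therefore has to genuinely use the absence of parameters, and this is the subtle heart of the construction, carried out in \cite{bennosam}. A purely model-theoretic route --- embedding any model of $\textup{E-PRA}^{\omega}$ into a model of the nonstandard system via a sufficiently saturated (iterated) ultrapower whose standard elements form the original model --- would yield the first assertion, with parameter-free transfer validated by {\L}o\'{s}'s theorem, but would still require the proof-theoretic input above to recover the $\textup{PRA}$-level bound.
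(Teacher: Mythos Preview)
Your sketch is correct and in fact does considerably more than the paper itself: the paper's proof of this theorem consists entirely of the citation ``See \cite{bennosam}*{Cor.\ 9}'', with the surrounding text adding only that the result without $\textup{PF-TP}_{\forall}$ is already implicit in \cite{brie}*{Cor.\ 7.6}. So there is no argument in the paper to compare against.

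That said, what you outline --- the Herbrandised nonstandard functional (Shoenfield-style) interpretation of \cite{brie}, its soundness for $\textup{I}$ and $\textup{HAC}_{\textup{int}}$ with primitive-recursive realisers, the characterisation lemma collapsing the interpretation on internal formulas, and the separate treatment of parameter-free transfer --- is precisely the machinery used in the cited references, so your proposal is an accurate reconstruction of the actual proof rather than a genuinely different route. Two small points: first, the paper notes that ``the conservation result for $\textup{E-PRA}_{\st}^{\omega*}+\textup{QF-AC}^{1,0}$ is trivial'', so $\textup{QF-AC}^{1,0}$ need not be fed through the nonstandard interpretation at all; second, your identification of $\textup{PF-TP}_{\forall}$ as the delicate axiom is exactly right, and it is indeed the point where \cite{bennosam} goes beyond \cite{brie}.
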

\begin{proof}
See \cite{bennosam}*{Cor.\ 9}.  
\end{proof}
The conservation result for $\textup{E-PRA}_{\st}^{\omega*}+\textup{QF-AC}^{1,0}$ is trivial.  
Furthermore, omitting PF-TP$_{\forall}$, the theorem is implicit in \cite{brie}*{Cor.\ 7.6} as the proof of the latter goes through as long as EFA is available.
We now discuss the two final axioms of \eqref{poli}.   
\subsection{Transfer and Standard Part in $\RCAO$}
We first discuss the \emph{Transfer principle} included in $\RCAO$, which is as follows.     
\begin{princ}[PF-TP$_\forall$]  
For any internal formula $\varphi(x^{\tau})$ with all parameters shown, we have $(\forall^{\st}x^{\tau})\varphi(x)\di (\forall x)\varphi(x)$.
\end{princ} 
A special case of the previous can be found in Avigad's system NPRA$^{\omega}$ from \cite{avi3}.  
The omission of parameters in PF-TP$_{\forall}$ is essential, as is clear from the following theorem, relating to the following principles:
\be\tag{$\paai$}
 (\forall^{\st}f^{1})\big[(\forall^{\st}n^{0})f(n)=_{0}0\di (\forall n^{0})f(n)=_{0}0],
\ee 
\be\tag{$\exists^{2}$}
(\exists \varphi^{2})(\forall g^{1})\big[(\exists x^{0})g(x)=0 \asa \varphi(g)=0  \big].
\ee
Note that standard parameters are allowed in $f$, and that $(\exists^{2})$ is the functional version of $\ACA_{0}$ (\cite{simpson2}*{III}), i.e.\ arithmetical comprehension.
\begin{thm}\label{markje}
The system $\RCAO$ proves $\paai\asa (\exists^{2})$.
\end{thm}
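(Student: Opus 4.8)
The plan is to prove the two implications separately. Recall first that $(\exists^{2})$, as well as the sentence $(\mu^{2})$ asserting the existence of Feferman's search functional $\mu^{2}$ (i.e.\ $(\forall f^{1})\big[(\exists n^{0})f(n)=0\di f(\mu(f))=0\big]$), are \emph{internal} sentences \emph{without} parameters, and that $\RCAo$ --- hence $\RCAO$ --- proves the internal equivalence $(\exists^{2})\asa(\mu^{2})$ (see \cite{kohlenbach2}). For $(\exists^{2})\di\paai$, I would first pass to $(\mu^{2})$; since this is an internal sentence without parameters, $\textup{PF-TP}_{\forall}$ (in contrapositive form) furnishes a \emph{standard} $\mu^{2}$ with the displayed property. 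Now let $f^{1}$ be standard (standard parameters in $f$ are harmless, leaving $f$ standard) and assume $(\forall^{\st}n^{0})f(n)=0$; let $h^{1}$ be the standard function with $h(n)=0$ iff $f(n)\neq0$. If $(\exists n)f(n)\neq0$ held, then $h$ would have a zero, whence $h(\mu(h))=0$, i.e.\ $f(\mu(h))\neq0$; but $\mu(h)$ is a standard number (a standard functional applied to a standard argument), contradicting $(\forall^{\st}n)f(n)=0$. Hence $(\forall n)f(n)=0$, which is $\paai$.

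For $\paai\di(\exists^{2})$, I would follow the canonical-approximation strategy of the introduction. By idealization, fix a nonstandard $M^{0}$ and let $\Psi(g^{1},M)$ be $0$ if $(\exists x\leq M)(g(x)=0)$ and $1$ otherwise (bounded search, so a term of $\RCAo$); this $\Psi(\cdot,M)$ is the canonical approximation of the functional sought for $(\exists^{2})$. Using $\paai$ one shows $\Psi(\cdot,M)$ is $\Omega$-invariant in the sense of Definition~\ref{homega}: for standard $g^{1}$ and nonstandard $M,M'$ we get $\Psi(g,M)=\Psi(g,M')$, since $(\exists x\leq M)g(x)=0$ implies $(\exists x)g(x)=0$, hence $(\exists^{\st}x)g(x)=0$ by contraposing $\paai$ for the standard function that is $0$ exactly where $g$ is nonzero, and such a standard $x$ lies below $M'$. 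The axiom $\Omega$-CA then provides a standard $\varphi^{2}$ with $(\forall^{\st}g^{1})\big(\varphi(g)=\Psi(g,M)\big)$; combining this with the above computation and $\paai$ (by which a standard $g$ has a zero iff it has a standard zero, the latter then being below $M$) gives $(\forall^{\st}g^{1})\big[\varphi(g)=0\asa(\exists x)g(x)=0\big]$. What remains is to promote this to the internal sentence $(\exists^{2})$, i.e.\ to all $g^{1}$.

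That promotion is the step I expect to be the main obstacle. One cannot simply invoke $\textup{PF-TP}_{\forall}$: the matrix $\varphi(g)=0\asa(\exists x)g(x)=0$ carries the standard parameter $\varphi$, and the omission of parameters in $\textup{PF-TP}_{\forall}$ is exactly what is essential here (cf.\ the discussion just before the theorem). Moreover, as a matter of pure logic a standard functional correct on standard inputs need not witness $(\exists^{2})$. Thus the argument must exploit the precise content of $\Omega$-CA, and possibly a further use of idealization $\textup{I}$ to treat all type-$1$ arguments at once, so as to show that the standard functional extracted from the $\Omega$-invariant approximation $\Psi(\cdot,M)$ really decides $(\exists x)g(x)=0$ for \emph{every} $g^{1}$. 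Once that is secured, both implications are complete; the standardisation via $\textup{PF-TP}_{\forall}$, the $\Omega$-invariance computation, and the use of $\mu^{2}$ are routine.
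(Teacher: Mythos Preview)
The paper's proof is a bare citation to \cite{bennosam}*{Corollary~12}, so you are reconstructing that argument. Your forward direction is correct, and in the reverse direction your $\Omega$-invariant approximation $\Psi(g,M)$ and the application of $\Omega$-CA are fine; you correctly arrive at a standard $\varphi^{2}$ satisfying $(\forall^{\st}g^{1})[\varphi(g)=0\asa(\exists x)g(x)=0]$ and correctly flag the promotion to all $g^{1}$ as the real obstacle.

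Your proposed fix, however, points in the wrong direction: neither idealization nor a sharper use of $\Omega$-CA will eliminate the type-$2$ parameter $\varphi$ from the transfer instance you need. The actual resolution is the constant-symbol mechanism of Remark~\ref{tokkiep}. Since the functional in $(\exists^{2})$ can be made \emph{unique} (restrict outputs to $\{0,1\}$), $\RCAO$ carries a dedicated constant $E_{0}$ together with an axiom of shape \eqref{durfall} asserting $\st(E_{0})$ and that any standard witness of $(\exists^{2})^{\st}$ agrees with $E_{0}$ on standard inputs. Your $\varphi$ is such a witness (using $\paai$ once more to pass between $(\exists x)$ and $(\exists^{\st}x)$ for standard $g$), so $(\forall^{\st}g)[E_{0}(g)=0\asa(\exists x)g(x)=0]$; now the bracketed formula is internal and contains, besides $g$, only the \emph{constant} $E_{0}$ --- not a parameter --- so $\textup{PF-TP}_{\forall}$ applies directly and yields $(\exists^{2})$. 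This is exactly how Remark~\ref{tokkiep} obtains $\eqref{MUC}^{\st}\di\eqref{MUC}$, and it is the device \cite{bennosam}*{\S3.3} uses here.
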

\begin{proof}
By \cite{bennosam}*{Corollary 12}.
\end{proof}
Next, we discuss the \emph{Standard Part principle}, called $\Omega$-CA, included in $\RCAO$.  
Intuitively speaking, a Standard Part principle allows us to convert nonstandard into standard objects.  
By way of example, the following type 1-version of the Standard part principle results in a conservative extension of $\WKL_{0}$ (See \cites{keisler1, briebenno}).  
\be\label{STP}\tag{STP}
(\forall X^{1})(\exists^{\st} Y^{1})(\forall^{\st} x^{0})(x\in X\asa x\in Y).
\ee
Here, we have used set notation to increase readability;  We assume that sets $X^{1}$ are given by their characteristic functions $f^{1}_{X}$, i.e.\ $(\forall x^{0})[x\in X\asa f_{X}(x)=1]$.     
The set $Y$ from \eqref{STP} is also called the \emph{standard part} of $X$.  We also write `$N^{0}\in \Omega$' as short for `$\neg\st(N)$' and say that `$N$ is infinite'.   

\medskip
    
We now discuss the Standard Part principle $\Omega$-CA, a very practical consequence of the axiom HAC$_{\textup{int}}$.  
Intuitively speaking, $\Omega$-CA expresses that we can obtain 
the standard part (in casu $G$) of \emph{$\Omega$-invariant} nonstandard objects (in casu $F(x,M)$).   
\bdefi[$\Omega$-invariance]\label{homega} Let $F^{(\sigma\times  0)\di 0}$ be standard and fix $M^{0}\in \Omega$.  
Then $F(\cdot,M)$ is {\bf $\Omega$-invariant} if   
\be\label{homegainv}
(\forall^{\st} x^{\sigma})(\forall N^{0}\in \Omega)\big[F(x ,M)=_{0}F(x,N) \big].  
\ee
\edefi
\begin{princ}[$\Omega$-CA]\rm Let $F^{(\sigma\times 0)\di 0}$ be standard and fix $M^{0}\in \Omega$.
For every $\Omega$-invariant $F(\cdot,M)$, there is a standard $G^{\sigma\di 0}$ such that
\be\label{homegaca}
(\forall^{\st} x^{\sigma})(\forall N^{0}\in \Omega)\big[G(x)=_{0}F(x,N) \big].  
\ee
\end{princ}
The axiom $\Omega$-CA provides the standard part of a nonstandard object, if the latter is \emph{independent of the choice of infinite number} used in its definition.  
Proofs may be found in \cite{tale} or \cite{firstHORM}.
\begin{thm}\label{drifh}
In the system $\RCAO$, the principle $\Omega\textup{-CA}$ is provable.  
\end{thm}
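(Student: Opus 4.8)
The plan is to derive $\Omega\textup{-CA}$ from the two nonstandard axioms $\textup{I}$ (Idealization) and $\textup{HAC}_{\textup{int}}$ already present in $\RCAO$, together with the basic fact that the standard objects of $\textup{E-PRA}_{\st}^{\omega*}$ are closed under the term-forming operations, so that a standard functional applied to standard arguments yields a standard value. Fix a standard $F^{(\sigma\times 0)\di 0}$ and $M^{0}\in\Omega$ such that $F(\cdot,M)$ is $\Omega$-invariant as in \eqref{homegainv}; we must produce a \emph{standard} $G^{\sigma\di 0}$ witnessing \eqref{homegaca}.

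First I would observe that for every standard $x^{\sigma}$ the internal set $S_{x}:=\{n^{0}: F(x,n)\neq_{0}F(x,M)\}$ (internal, with parameters $x$ and $M$) contains no infinite number, which is immediate from $\Omega$-invariance \eqref{homegainv}. Next, applying $\textup{I}$ to the internal formula ``$N\in S_{x}\wedge N>m$'' yields the usual overspill principle — an internal set that contains arbitrarily large standard numbers contains an infinite number — so $S_{x}$ must in fact be bounded by some standard $k_{x}$. Hence
\[
(\forall^{\st}x^{\sigma})(\exists^{\st}k^{0})(\forall n^{0},m^{0}\geq k)\big[F(x,n)=_{0}F(x,m)\big],
\]
since for $n,m\geq k_{x}+1$ we have $F(x,n)=_{0}F(x,M)=_{0}F(x,m)$; note the witness $k_{x}+1$ is standard and $F(x,k_{x}+1)=_{0}F(x,M)$, which incidentally also shows that $F(x,M)$ is standard for standard $x$ — exactly what \eqref{homegaca} forces.

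Now apply $\textup{HAC}_{\textup{int}}$ to the internal formula $\varphi(x,k):\equiv(\forall n,m\geq k)[F(x,n)=_{0}F(x,m)]$: this provides a standard functional $\Psi$ returning for each standard $x$ a finite sequence $\Psi(x)$ of numbers with $(\exists k\in\Psi(x))\varphi(x,k)$. I would then define $G(x):=F\big(x,\,1+\max_{i<|\Psi(x)|}(\Psi(x))_{i}\big)$, which is standard because it is an explicit term in the standard objects $F$ and $\Psi$. To verify \eqref{homegaca}: for standard $x$ the sequence $\Psi(x)$ is standard, so all its entries, and hence $n^{*}:=\max_{i<|\Psi(x)|}(\Psi(x))_{i}$, are standard, whence $n^{*}<M$; picking $k\in\Psi(x)$ with $\varphi(x,k)$, both $n^{*}+1$ and $M$ exceed $k$, so $G(x)=_{0}F(x,n^{*}+1)=_{0}F(x,M)$, and by $\Omega$-invariance $F(x,M)=_{0}F(x,N)$ for every $N\in\Omega$, as required.

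The only genuinely delicate point is the middle step: using $\Omega$-invariance, via overspill, to upgrade the merely classical (``for some, possibly infinite, $k$'') stabilization of $F(x,\cdot)$ into a \emph{standard} stabilization point, which is what makes $F(x,M)$ standard and gets the argument off the ground; everything afterwards is a routine application of $\textup{HAC}_{\textup{int}}$ plus closure of standard objects under terms. One should also dispatch the trivial edge cases (e.g.\ $\Psi(x)$ empty for some nonstandard $x$, where $G(x)$ may be given any default value), as these do not affect the standard instances demanded by \eqref{homegaca}.
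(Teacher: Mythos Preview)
Your argument is correct. The paper itself gives no proof here, deferring to \cite{tale} and \cite{firstHORM}; your route --- underspill via $\textup{I}$ to obtain a standard stabilization point for $F(x,\cdot)$, then $\textup{HAC}_{\textup{int}}$ to extract a standard Herbrand functional $\Psi$, and finally $G(x):=F\big(x,1+\max_{i<|\Psi(x)|}(\Psi(x))_{i}\big)$ --- is the standard derivation and matches what those references do.

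One cosmetic remark: your overspill step is phrased a bit loosely. Spelled out, suppose for some standard $x$ that $(\forall^{\st}k)(\exists n\geq k)[F(x,n)\neq_{0}F(x,M)]$. Since the matrix is monotone in $k$, for every standard finite sequence $s$ of numbers there is a single $n$ with $n\geq k\wedge F(x,n)\neq_{0}F(x,M)$ for all $k\in s$; by $\textup{I}$ there is then an $n$ with $n\geq k$ for all standard $k$ (so $n\in\Omega$) and $F(x,n)\neq_{0}F(x,M)$, contradicting $\Omega$-invariance. This yields $(\forall^{\st}x)(\exists^{\st}k)(\forall n\geq k)[F(x,n)=_{0}F(x,M)]$, from which your $M$-free stabilization statement follows immediately. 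Everything after that is exactly as you wrote.
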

\begin{cor}\label{genall}
In $\RCAO$, we have for all standard $F^{(\sigma\times 0)\di 1}$ that
\begin{align*}
(\forall^{\st} x^{\sigma})(\forall M,N \in & \Omega)\big[F(x ,M)\approx_{1}F(x,N) \big] \\
&\di (\exists^{\st}G^{\sigma\di 1})(\forall^{\st} x^{\sigma})(\forall N^{0}\in \Omega)\big[G(x)\approx_{1}F(x,N) \big],
\end{align*}
where $f^{1}\approx_{1} g^{1}$ if $(\forall^{\st}n^{0})(f(n)=_{0}g(n))$. 
\end{cor}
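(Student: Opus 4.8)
The strategy is to reduce Corollary~\ref{genall} to Theorem~\ref{drifh} (i.e.\ $\Omega$-CA) by the standard trick of coding a type-$1$ object together with its single argument into one type-$0$ number, so that type-$1$ outputs become type-$0$ outputs in the relevant sense. Concretely, given standard $F^{(\sigma\times 0)\di 1}$ and a fixed $M^{0}\in\Omega$, I would define the standard functional $\widetilde F^{(\sigma\times 0\times 0)\di 0}$ by $\widetilde F(x,n,k):=F(x,n)(k)$. The hypothesis $(\forall^{\st}x)(\forall M,N\in\Omega)[F(x,M)\approx_{1}F(x,N)]$ unpacks, by definition of $\approx_{1}$, to $(\forall^{\st}x)(\forall M,N\in\Omega)(\forall^{\st}k)[F(x,M)(k)=_{0}F(x,N)(k)]$, i.e.\ $\widetilde F(\cdot,\cdot,k)$ is (after absorbing the standard parameter $k$ into the first coordinate, or simply treating $k$ as an extra standard argument) $\Omega$-invariant in the sense of Definition~\ref{homega}.

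The second step is to apply $\Omega$-CA (Theorem~\ref{drifh}) to obtain, for each standard $k$, a standard object; but since we want a single standard $G^{\sigma\di 1}$, the clean way is to apply $\Omega$-CA once to the functional $H^{((\sigma\times 0)\times 0)\di 0}$ given by $H((x,k),M):=F(x,M)(k)$, viewing $\sigma\times 0$ as the single parameter type in the role of `$\sigma$' in the statement of $\Omega$-CA. The $\Omega$-invariance hypothesis \eqref{homegainv} for $H$ is exactly what the unpacked hypothesis of the Corollary gives us. Then $\Omega$-CA yields a standard $G_{0}^{(\sigma\times 0)\di 0}$ with $(\forall^{\st}x,k)(\forall N\in\Omega)[G_{0}(x,k)=_{0}F(x,N)(k)]$. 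Finally, put $G(x):=\lambda k.\,G_{0}(x,k)$; this $G$ is standard (being explicitly defined from the standard $G_{0}$ using only the standard $\lambda$-operator available in $\textup{E-PRA}_{\st}^{\omega*}$), and for standard $x$ and $N\in\Omega$ we get $(\forall^{\st}k)[G(x)(k)=_{0}F(x,N)(k)]$, which is precisely $G(x)\approx_{1}F(x,N)$, as required.

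The one point needing a little care — and the closest thing to an obstacle — is bookkeeping around the types: the statement of $\Omega$-CA is for $F^{(\sigma\times 0)\di 0}$ with the infinite number $M$ occupying the distinguished type-$0$ slot, whereas here we have an additional type-$0$ argument $k$ that must be kept as a genuine standard argument rather than conflated with $M$. This is handled either by the pairing function on $\N$ (coding $(x,k)$ and relying on the fact that $k$ standard $\leftrightarrow$ its code standard, so the code of $(x,k)$ ranges over standard objects exactly when $x$ and $k$ are standard) or, more transparently in the higher-type setting, by noting that $\Omega$-CA as proved in $\RCAO$ applies verbatim with $\sigma$ replaced by any finite type, here $\sigma\times 0$. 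Either reading makes the argument go through; I would state it with the product type to avoid invoking properties of the pairing. No transfer or idealization axioms are needed — only $\Omega$-CA and the definitional apparatus of the base theory — so the corollary is, as the name suggests, a routine packaging of Theorem~\ref{drifh}.
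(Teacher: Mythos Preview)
Your proposal is correct and is exactly the intended argument: the paper states the result as an immediate corollary to Theorem~\ref{drifh} without giving a proof, and the natural reduction---absorbing the extra standard argument $k^{0}$ into the parameter type so that $H((x,k),M):=F(x,M)(k)$ is $\Omega$-invariant of type $((\sigma\times 0)\times 0)\di 0$, then $\lambda$-abstracting the resulting $G_{0}$---is precisely what is meant. Your care about keeping the standard argument $k$ separate from the infinite slot $M$ is well placed; the product-type reading is the cleanest way to present it.
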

\begin{cor}\label{genalli}
In $\RCAO$, for all standard $F^{(\sigma\times 0)\di 1}$ and internal formulas $C$,
\begin{align*}
(\forall^{\st} x^{\sigma})(\forall M,N \in & \Omega)\big[C(F,x)\di F(x ,M)\approx_{1}F(x,N) \big] \\
&\di (\exists^{\st}G^{\sigma\di 1})(\forall^{\st} x^{\sigma})(\forall N^{0}\in \Omega)\big[C(F,x)\di  G(x)\approx_{1}F(x,N) \big].
\end{align*}
\end{cor}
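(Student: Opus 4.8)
The plan is to run the proof of Corollary~\ref{genall} (equivalently, of $\Omega$-CA via HAC$_{\textup{int}}$) while carrying the internal guard $C(F,x)$ through the argument; the single genuinely new ingredient is an overspill step showing that, under the guard, the relevant internal sequences already stabilise at a \emph{standard} stage. First I would reduce the type-$1$ statement to a type-$0$ one exactly as in the step from $\Omega$-CA to Corollary~\ref{genall}: set $\widehat F((x,n),k):=F(x,k)(n)$, a standard functional of type $(\sigma\times 0\times 0)\di 0$, and read the internal formula $C(F,x)$ as having free variables $(x,n)$. It then suffices to produce a standard $\widehat G$ of type $(\sigma\times 0)\di 0$ with
\[
(\forall^{\st}x,n)(\forall N\in\Omega)\big[C(F,x)\di \widehat G(x,n)=_{0}F(x,N)(n)\big],
\]
since $G(x):=\lambda n.\,\widehat G(x,n)$ is then standard and, as $\approx_{1}$ only tests standard arguments, witnesses the conclusion.

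The main step is: for standard $x,n$ with $C(F,x)$, the internal sequence $k\mapsto F(x,k)(n)$ is eventually constant from a standard index on. Indeed, under $C(F,x)$ the hypothesis gives $F(x,M)\approx_{1}F(x,N)$ for all $M,N\in\Omega$, so for our standard $n$ the number $F(x,k)(n)$ has one and the same value $v$ for every infinite $k$; in particular $F(x,\cdot)(n)$ is internally eventually constant. Were the least modulus of this convergence infinite, the internal ($\Sigma^{0}_{1}$) statement $(\exists k\geq m)\,[F(x,k)(n)\neq v]$ would hold for every standard $m$, hence by overspill for some infinite $m_{1}$, yielding an infinite $k\geq m_{1}$ with $F(x,k)(n)\neq v$ --- contradicting $F(x,k)(n)=v$ for all infinite $k$. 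So there is a standard $m_{0}$ with $(\forall k\geq m_{0})\,[F(x,k)(n)=_{0}F(x,m_{0})(n)]$, and note $F(x,m_{0})(n)=v$.

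Combining this with the trivial case $\neg C(F,x)$ (where $m_{0}:=0$ makes the implication below vacuous) gives
\[
(\forall^{\st}x,n)(\exists^{\st}m_{0})\big[C(F,x)\di (\forall k\geq m_{0})\,F(x,k)(n)=_{0}F(x,m_{0})(n)\big],
\]
whose matrix is internal precisely because $C$ is. Applying HAC$_{\textup{int}}$ yields a standard functional $\Phi$ assigning to each $(x,n)$ a finite sequence of naturals, some entry $m_{0}$ of which satisfies the matrix; put $M(x,n):=$ the largest entry of $\Phi(x,n)$ and $\widehat G(x,n):=F(x,M(x,n))(n)$, which is standard. For standard $x$ with $C(F,x)$ and standard $n$, the admissible $m_{0}\leq M(x,n)$ makes $F(x,\cdot)(n)$ constant from $m_{0}$ on, so $\widehat G(x,n)=F(x,M(x,n))(n)=F(x,N)(n)$ for every infinite $N$, which is what was needed; $G(x):=\lambda n.\,\widehat G(x,n)$ then gives the desired standard functional.

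The hard part is the overspill step: one must see that \emph{guarded} $\Omega$-invariance forces the internal limit to be attained at a standard index --- the $\RCAO$-analogue of the usual ``the limit comes with a standard modulus'' phenomenon --- and this is where the internal induction of $\RCAO$ is used. The remaining points are bookkeeping: keeping the matrix internal (which is exactly why $C$ is assumed internal, so that HAC$_{\textup{int}}$ is applicable), and noting that replacing $\Phi(x,n)$ by its maximum is harmless since constancy from $m_{0}$ propagates to all larger stages. If one prefers to avoid re-opening the proof of $\Omega$-CA, essentially the same manipulations establish a ``guarded'' form of Corollary~\ref{genall} first and then deduce the present statement from it.
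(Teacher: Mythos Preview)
The paper does not provide a proof for this corollary; it is listed immediately after Corollary~\ref{genall} and Theorem~\ref{drifh} with the understanding that it follows by the same argument with the internal guard $C(F,x)$ carried along. Your proposal does exactly this: you unfold the $\Omega$-CA proof (underspill plus $\textup{HAC}_{\textup{int}}$) componentwise and thread $C(F,x)$ through, which is correct and is the intended route. Two small remarks: first, the overspill/underspill step you use is available in $\RCAO$ via the idealisation axiom~$\textup{I}$ rather than via ``internal induction'' as you phrase it (apply~$\textup{I}$ to $\varphi(z,y):\equiv y\geq z\wedge(\exists k\geq y)[F(x,k)(n)\neq v]$); second, taking the maximum of $\Phi(x,n)$ is legitimate precisely because the matrix $(\forall k\geq m_{0})[F(x,k)(n)=F(x,m_{0})(n)]$ is upward-closed in $m_{0}$, which you use implicitly and might state explicitly. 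Note also that a one-line reduction to Corollary~\ref{genall} by case-splitting $F$ on $C(F,x)$ is \emph{not} available in general, since $C$ need not be decidable in $\RCAO$; so re-running the proof, as you do, really is required.
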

Applications of the previous corollaries are assumed to be captured under the umbrella-term `$\Omega$-CA'.  
Furthermore, by the above, if we drop the $\Omega$-invariance condition in $\Omega$-CA, the resulting system is a non-conservative extension of $\RCAO$.       
 
\subsection{Notations and remarks}\label{norema}
We finish this section with some remarks and notations regaring $\RCAO$.  
First of all, we shall mostly use notations as in \cite{bennosam}.  
\begin{rem}[Notations]\label{notawin}\rm
We write $(\forall^{\st}x^{\tau})\Phi(x^{\tau})$ and $(\exists^{\st}x^{\sigma})\Psi(x^{\sigma})$ as short for 
$(\forall x^{\tau})\big[\st(x^{\tau})\di \Phi(x^{\tau})\big]$ and $(\exists^{\st}x^{\sigma})\big[\st(x^{\sigma})\wedge \Psi(x^{\sigma})\big]$.     
We also write $(\forall x^{0}\in \Omega)\Phi(x^{0})$ and $(\exists x^{0}\in \Omega)\Psi(x^{0})$ as short for 
$(\forall x^{0})\big[\neg\st(x^{0})\di \Phi(x^{0})\big]$ and $(\exists x^{0})\big[\neg\st(x^{0})\wedge \Psi(x^{0})\big]$.  Furthermore, if $\neg\st(x^{0})$ (resp.\ $\st(x^{0})$), we also say that $x^{0}$ is `infinite' (resp.\ finite) and write `$x^{0}\in \Omega$'.  
Finally, a formula $A$ is `internal' if it does not involve $\st$, and $A^{\st}$ is defined from $A$ by appending `st' to all quantifiers (except bounded number quantifiers).    
\end{rem}
Secondly, we use the usual notations for rational and real numbers and functions as introduced in \cite{kohlenbach2}*{p.\ 288-289} (and \cite{simpson2}*{I.8.1} for the former).  
\begin{nota}[Real number]\label{keepintireal}\rm
A (standard) real number $x$ is a (standard) fast-converging Cauchy sequence $q_{(\cdot)}^{1}$, i.e.\ $(\forall n^{0}, i^{0})(|q_{n}-q_{n+i})|<_{0} \frac{1}{2^{n}})$.  
We freely make use of Kohlenbach's `hat function' from \cite{kohlenbach2}*{p.\ 289} to guarantee that every sequence $f^{1}$ can be viewed as a real.  
Two reals $x, y$ represented by $q_{(\cdot)}$ and $r_{(\cdot)}$ are \emph{equal}, denoted $x=y$, if $(\forall n)(|q_{n}-r_{n}|\leq \frac{1}{2^{n}})$. Inequality $<$ is defined similarly.         
We also write $x\approx y$ if $(\forall^{\st} n)(|q_{n}-r_{n}|\leq \frac{1}{2^{n}})$ and $x\gg y$ if $x>y\wedge x\not\approx y$.  Functions $F:\R\di \R$ mapping reals to reals are represented by functionals $\Phi^{1\di 1}$ such that $(\forall x, y)(x=y\di \Phi(x)=\Phi(y))$, i.e.\ equal reals are mapped to equal reals.   
\end{nota}
Thirdly, by way of context for the next remark, recall that extending the language of a logical system with symbols representing certain functionals is common practice in mathematical logic: Indeed, see e.g.\ \cite{farwise}*{p.\ 935, \S4.5}, \cite{avi2}*{\S2.5} and \cites{fefja1,fefja2}.
\begin{rem}[Standard functionals]\label{tokkiep}\rm
We discuss some consequences of {PF-TP$_{\forall}$};  In particular, how the latter gives rise to \emph{standard and unique} functionals.    

\medskip

First of all, consider the fan functional, defined as follows:
\be\label{MUC}\tag{MUC}
(\exists \Omega^{3})\big[(\forall \varphi^{2}, f^{1}, g^{1}\leq_{1}1 )[\overline{f}(\Omega(\varphi))=_{0}\overline{g}(\Omega(\varphi))\di \varphi(f)=_{0}\varphi(g)]\big].
\ee  
We immediately obtain, via the contraposition of PF-TP$_{\forall}$, that 
\be\label{drifsd}
(\exists^{\st} \Theta^{3})(\forall \varphi^{2}, f^{1}, g^{1}\leq_{1}1 )[\overline{f}(\Theta(\varphi))=_{0}\overline{g}(\Theta(\varphi))\di \varphi(f)=_{0}\varphi(g)],
\ee
as the formula in big square brackets in \eqref{MUC} is internal and does not have parameters other than $\Omega$.  
In other words, we may assume that the fan functional is \emph{standard} and the same holds for \emph{any functional} of which the (internal) definition does not involve additional parameters.   

\medskip

Secondly, again for the fan functional, we may assume $\Omega(\varphi)$ is the \emph{least number} as in \eqref{MUC}, which implies that $\Theta(\varphi)$ from \eqref{drifsd} can also be assumed to have this property.  
However, then $\Theta(\varphi)=_{0}\Omega(\varphi)$ for any $\varphi^{2}$, implying $\Theta=_{3}\Omega$, i.e.\ if it exists, the fan functional is \emph{unique and standard}.  
The same again holds for any uniquely-defined functional of which the internal definition is parameter-free.  

\medskip

The two above observations prompted the addition to $\RCAO$ of axioms reflecting the uniqueness and standardness of certain functionals (See \cite{bennosam}*{\S3.3}).  
In particular, the language of $\RCAO$ contains a distinct symbol $\Omega_{0}$ and the system itself contains:
\be\label{durfall}
\st(\Omega_{0})\wedge (\forall^{\st}\Xi^{3})\big[M^{\st}(\Xi)\di (\forall^{\st} \varphi^{2})(\Omega_{0}(\varphi)=_{0}\Xi(\varphi))\big], 
\ee
where $M(\Omega)$ is the formula in square brackets in \eqref{MUC}, with the addition that $\Omega(\varphi)$ is the least number with this property.  

\medskip

Clearly, the axiom \eqref{durfall} expresses that, if it exists, the fan functional is standard and unique, reflecting the standardness and uniqueness properties we have proved in the previous two paragraphs assuming \eqref{MUC}.  
Furthermore, as noted in \cite{bennosam}*{\S3.3}, $\RCAO$ contains axioms like \eqref{durfall} for uniquely defined (via an internal formula) functionals.
An advantage of \eqref{durfall} is that $\RCAO$ proves that $\eqref{MUC}^{\st}\di \eqref{MUC}$ by applying PF-TP$_{\forall}$ to $M^{\st}(\Omega_{0})$, as discussed in \cite{bennosam}*{\S3.3} and \cite{firstHORM}*{\S4}.  
We stress that \eqref{durfall} does not represent some `trick' to obtain equivalences: This formula reflects the standard and unique nature of the fan functional which we proved above.  
%
%
\end{rem}
Fourth, we show that versions of \eqref{durfall}, and the associated equivalences, can also be obtained \emph{without} invoking the uniqueness of the functional at hand.  
\begin{rem}[Standard functionals II]\label{tokkier}\rm
We discuss important consequences of PF-TP$_{\forall}$;  In particular how the latter gives rise to basic \emph{standard} properties of functionals.    
By way of example, consider the modulus-of-continuity functional:
\be\label{POC2}\tag{MPC}
(\exists \Delta^{3})\big[(\forall \varphi^{2}, f^{1}, g^{1}\leq_{1}1 )(\overline{f}\Delta(\varphi,f)=_{0}\overline{g}\Delta(\varphi,f)\di \varphi(f)=_{0}\varphi(g))\big].
\ee
Kohlenbach shows in \cite{kohlenbach4}*{\S4} that an \emph{associate} (See \cite{kohlenbach4}*{Def.\ 4.2} or Definition~\ref{kodef} below) can be defined from a modulus of continuity.  
Thus, let $\Xi(\Phi, \omega_{\Phi})$ be the functional $\alpha$ from the second part of the proof of \cite{kohlenbach4}*{Prop.\ 4.4} which produces an associate for $\Phi^{2}$ from the latter and a modulus of continuity $\omega_{\Phi}$ of $\Phi$.  

\medskip

Working in $\RCAO+\eqref{POC2}$, both $\Xi$ and the functional $\Delta$ from \eqref{POC2} are standard, and it is clear that the standard functional $\Xi(\varphi, \Delta(\varphi, \cdot))$ produces a standard associate for any standard $\varphi^{2}$.  
By the definition of associate and the fact that $\Delta$ is standard, we have the following \emph{standard} property:
\be\label{torque}
(\forall^{\st} \varphi^{2}, f^{1}\leq_{1}1)(\exists^{\st} n^{0})\big(\Xi(\varphi, \Delta(\varphi, \cdot))(\overline{f}n)>0\big).
\ee  
Applying $\QFAC^{2,0}$ relative to `st' (which follows from HAC$_{\textup{int}}$), there is a standard functional $\Psi^{3}$ witnessing $n$ in \eqref{torque}.  Again by the definition of associate: 
\be\label{dorlpppp}
(\forall^{\st} \varphi^{2}, f^{1}\leq_{1}1)\big[\Xi(\varphi, \Delta(\varphi, \cdot))(\overline{f}\Psi(\varphi,f))=_{0}\varphi(f)+1\big].
\ee
In short, \emph{if} there is a modulus-of-continuity functional as in \eqref{POC2}, \emph{then} we can obtain a \emph{standard} `associate functional' $\Xi$ and a suitable \emph{standard} modulus-of-continuity functional $\Psi$, which allow us to represent standard type two objects as countable ones as in \eqref{dorlpppp}.  The same observation goes through for \eqref{POC}$^{\st}$.    

\medskip

We now cast this observation into an axiom, namely the conjunct of $\st(\Lambda_{0})$ and:
\be\label{dorg}
(\forall^{\st}\Upsilon^{3})\big[K^{\st}(\Upsilon)\di (\forall^{\st}\varphi^{2}, f^{1}\leq_{1}1)[ \Xi(\varphi, \Upsilon(\varphi, \cdot))(\overline{f}\Lambda_{0}(\varphi, f))=\varphi(f)+1] \big],
\ee
where $K(\Delta)$ is the formula in square brackets in \eqref{POC2} and where $\Lambda_{0}^{3}$ is a new symbol added to the language of $\RCAO$.  
Any model $\M$ of $\RCAO$ can easily be extended to satisfy \eqref{dorg}: If there is standard $\Upsilon$ in $\M$ such that the latter satisfies $K^{\st}(\Upsilon)$, then \eqref{torque} holds in $\M$ for $\Delta$ replaced by $\Upsilon$. 
As a consequence, $\M$ contains $\Psi$ (standard in $\M$) such that \eqref{dorlpppp} holds in $\M$.  Now interpret $\Lambda_{0}$ as $\Psi$ in $\M$.  
In this light, we shall assume that $\RCAO$ has been extended with \eqref{dorg}.  

\medskip

We stress that \eqref{dorg} merely introduces a Skolem constant for a functional which can be derived from a (standard) modulus-of-continuity functional, assuming the latter exists, i.e.\ \eqref{dorg} merely formalises an observation made in the previous paragraphs.    
Furthermore, we show in Section \ref{LOCO} that \eqref{dorg} also allows us to prove $\eqref{POC}^{\st}\di \eqref{POC}$.
In conclusion, even without the use of uniqueness properties as in the previous remark, we can obtain useful versions of \eqref{durfall}.       
\end{rem}
Finally, one could view \eqref{durfall} and \eqref{dorg} as establishing basic properties of mathematical objects, which after all is one of the tasks of any base theory for RM.

\medskip

Fifth, we discuss  the notion of equality in $\RCAO$.  
\begin{rem}[Equality]\label{equ}\rm
The system $\RCAo$ only includes equality between natural numbers `$=_{0}$' as a primitive.  Equality `$=_{\tau}$' for type $\tau$-objects $x,y$ is defined as:
\be\label{aparth}
[x=_{\tau}y] \equiv (\forall z_{1}^{\tau_{1}}\dots z_{k}^{\tau_{k}})[xz_{1},\dots, z_{k}=_{0}yz_{1}\dots z_{k}]
\ee
if the type $\tau$ is composed as $\tau\equiv(\tau_{1}\di \dots\di \tau_{k}\di 0)$.
In the spirit of Nonstandard Analysis, we define `approximate equality $\approx_{\tau}$' as follows:
\be\label{aparth2}
[x\approx_{\tau}y] \equiv (\forall^{\st} z_{1}^{\tau_{1}},\dots, z_{k}^{\tau_{k}})[xz_{1}\dots z_{k}=_{0}yz_{1}\dots z_{k}]
\ee
with the type $\tau$ as above.  
The system $\RCAo$ includes the axiom of extensionality:
\be\label{EXT}\tag{E}  
(\forall \varphi^{\rho\di \tau})(\forall  x^{\rho},y^{\rho}) \big[x=_{\rho} y \di \varphi(x)=_{\tau}\varphi(y)   \big],
\ee
but as noted in \cite{brie}*{p.\ 1973}, the so-called axiom of standard extensionality \eqref{EXT}$^{\st}$ is problematic and cannot be included in $\RCAO$.  Nonetheless, instances of \eqref{EXT}$^{\st}$ can be obtained, as is clear from Theorem \ref{halleh}.
Furthermore, in light of Corollary \ref{genall}, it is obvious how $\Omega$-CA can be further generalised to $F^{(\sigma\times 0)\di \tau}$ using $\approx_{\tau}$ instead of $\approx_{1}$.  The same holds for `$\approx$' if $\tau=1$ and $F$ is a real-valued function.    
\end{rem}
Finally, we discuss the role of Tennenbaum's theorem in Nelson's framework.
\begin{rem}[The computable nature of operations in $\RCAO$]\label{ohdennenboom}\rm
Tennenbaum's theorem (\cite{kaye}*{\S11.3}) `literally' states that any nonstandard model of PA is not computable.  \emph{What is meant} is that for a nonstandard model $\M$ of PA, the operations $+_{\M}$ and $\times_{\M}$ cannot be computably defined in terms of the operations $+_{\N}$ and $\times_{\N}$ of the standard model $\N$ of PA.  

\medskip

While Tennenbaum's theorem is of interest to the \emph{semantic} approach to Nonstandard Analysis involving nonstandard models, $\RCAO$ is based on Nelson's \emph{syntactic} framework, and therefore Tennenbaum's theorem does not apply:  Any attempt at defining the (external) function `$+$ limited to the standard numbers' is an instance of \emph{illegal set formation}, forbidden in Nelson's \emph{internal} framework (\cite{wownelly}*{p.\ 1165}).  

\medskip

To be absolutely clear, lest we be misunderstood, Nelson's \emph{internal set theory} IST forbids the formation of \emph{external} sets $\{x\in A: \st(x)\}$ and functions `$f(x)$ limited to standard $x$'.  
Therefore, any appeal to Tennenbaum's theorem to claim the `non-computable' nature of $+$ and $\times$ from $\RCAO$ is blocked, for the simple reason that the functions `$+$ and $\times$ limited to the standard numbers' do not exist.              
On a related note, we recall Nelson's dictum from \cite[p.\ 1166]{wownelly} as follows:
\begin{quote}
\emph{Every specific object of conventional mathematics is a standard set.} It remains unchanged in the new theory \textup{[IST]}.  
\end{quote}
In other words, the operations `$+$' and `$\times$', but equally so primitive recursion, in (subsystems of) IST, are \emph{exactly the same} familiar operations we  
know from (subsystems of) ZFC.  Since the latter is a first-order system, we however cannot exclude the presence of nonstandard objects, and internal set theory just makes this explicit, i.e.\ IST turns a supposed bug into a feature.    
\end{rem}

\section{The EMT for the fan functional and related principles}\label{main}
In this section, we establish the EMT for principles related to the \emph{fan functional}.  
The latter was introduced by Tait as the first example of a functional which is \emph{non-obtainable}, i.e.\ not computable from lower-type objects (See \cite{noortje}*{p.\ 102}). 

\medskip

In intuitionistic mathematics, the fan functional emerges as follows:  By \cite{troelstra1}*{2.6.6, p.\ 141}, if a universe of functions $\mathfrak{U}$ satisfies $\bf{EL}+\FAN$, then the class ECF$(\mathfrak{U})$ of \emph{extensional continuous functionals relative to $\mathfrak{U}$}, contains a fan functional.  Here, $\bf{EL}$ is a basic system of intuitionistic mathematics and FAN is the fan theorem, the classical contraposition of WKL.  Similar results on the fan functional are in \cites{troelstra2,troelstra3,gandymahat}.    
In our notation, the (existence of the) fan functional is: 
\be\label{MUC3}\tag{MUC}
(\exists \Omega^{3})(\forall \varphi^{2}) (\forall g^{1}, f^{1}\leq_{1}1 )\big[\overline{g}\Omega(\varphi)=_{0}\overline{f}\Omega(\varphi)\di \varphi(g)=_{0}\varphi(f)\big].
\ee
By \cite{kohlenbach2}*{Prop.\ 3.13} and \cite{bennosam}*{Theorem 5}, the system $\RCAO+\eqref{MUC}$ is a conservative extension of $\RCA^{2}_{0}+\WKL$.  
By contrast, the fan functional implies that \emph{all} type 2-functionals are uniformly continuous, and hence contradicts $(\exists^{2})$ by \cite{kohlenbach2}*{Prop.~3.7}.

\subsection{The fan functional and continuity} 
In this section, we establish the EMT for the fan functional and derive Brouwer's continuity theorem from the latter.  
We also consider a somewhat surprising representation of the fan functional.  

\medskip

First of all, consider the following continuity principles:  
\be\label{druk}
(\forall^{\st}\varphi^{2})( \forall f^{1},g^{1}\leq_{1}1)\big[ {f}\approx_{1}{g} \di \varphi(f)=_{0}\varphi(g) \big] \tag{$\mathfrak{M}$}, 
\ee
\be\label{MUC2}\tag{$\textup{UC}^{*}$}
(\forall^{\st} \varphi^{2})(\exists^{\st}n^{0}) (\forall f^{1}, g^{1}\leq_{1}1 )[\overline{f}n=_{0}\overline{g}n\di \varphi(f)=_{0}\varphi(g)].
\ee
Here, $f^{1}\approx_{1} g^{1}$ is $(\forall^{\st}n)(f(n)=_{0}g(n))$.  In general, we say that $\varphi^{2}$ is `nonstandard continuous on Cantor space' if $(\forall f^{1},g^{1}\leq_{1}1 )\big[f \approx_{1} g \di \varphi(f)=_{0}\varphi(g) \big]$.  
\begin{thm}\label{muck}
In $\RCAO$, we have $\eqref{MUC}^{\st}\asa \eqref{MUC}\asa \eqref{druk}\asa \eqref{MUC2}$.
\end{thm}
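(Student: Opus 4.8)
The plan is to establish a cycle of implications
\[
\eqref{MUC}^{\st}\di \eqref{MUC}\di \eqref{druk}\di \eqref{MUC2}\di \eqref{MUC}^{\st},
\]
which together with the trivial implication $\eqref{MUC}\di \eqref{MUC}^{\st}$ (obtained by relativising all quantifiers, or rather by noting $\eqref{MUC}^{\st}$ is weaker) yields all four statements are equivalent. The first implication $\eqref{MUC}^{\st}\di \eqref{MUC}$ is already handled by the machinery in Remark~\ref{tokkiep}: the formula inside the square brackets of \eqref{MUC} is internal and parameter-free, so applying $\textup{PF-TP}_{\forall}$ to $M^{\st}(\Omega_{0})$ (via the axiom \eqref{durfall}) gives $\eqref{MUC}^{\st}\di \eqref{MUC}$. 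So the work is in the remaining three implications.

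\textbf{From \eqref{MUC} to \eqref{druk}.} Assume the fan functional $\Omega^{3}$ exists; by Remark~\ref{tokkiep} we may take it standard. Fix a standard $\varphi^{2}$ and $f,g\leq_{1}1$ with $f\approx_{1}g$. Since $\Omega$ is standard and $\varphi$ is standard, $\Omega(\varphi)$ is a standard natural number, so $f\approx_{1}g$ gives $\overline{f}\Omega(\varphi)=_{0}\overline{g}\Omega(\varphi)$ (equality of initial segments of standard length only needs agreement at standard arguments). Then \eqref{MUC} yields $\varphi(f)=_{0}\varphi(g)$, which is \eqref{druk}.

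\textbf{From \eqref{druk} to \eqref{MUC2}.} This is the step I expect to be the main obstacle, as it is the genuinely nonstandard direction: one must pass from the "infinitely close" formulation to the existence of a \emph{standard} modulus $n$. Fix a standard $\varphi^{2}$. The natural approach is to consider, for an arbitrary infinite $M^{0}\in\Omega$, the quantity
\[
n_{M}:=\mu k\leq M\,\big[(\forall f,g\leq_{1}1)(\overline{f}k=_{0}\overline{g}k\di \varphi(f)=_{0}\varphi(g))\big],
\]
with $n_{M}:=M$ if no such $k$ exists. One first argues, using \eqref{druk}, that some such $k\leq M$ \emph{does} exist and moreover that $n_{M}$ is finite: if $n_{M}$ were infinite, there would be (for every standard $k$) a pair $f_{k},g_{k}\leq_{1}1$ with $\overline{f_{k}}k=_{0}\overline{g_{k}}k$ but $\varphi(f_{k})\neq_{0}\varphi(g_{k})$; using the idealization axiom $\textup{I}$ from $\RCAO$ (available here, and in the regime of Cantor space this is exactly the kind of place where it is used, cf.\ Section~\ref{SB}) one extracts a single pair $f,g\leq_{1}1$ with $f\approx_{1}g$ yet $\varphi(f)\neq_{0}\varphi(g)$, contradicting \eqref{druk}. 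Hence $n_{M}$ is a finite number. The remaining subtlety is to get a \emph{standard} witness: $n_{M}$ itself is finite but its \emph{definition} uses the infinite parameter $M$, so one invokes $\Omega$-CA (in the form of Corollary~\ref{genalli} or directly Theorem~\ref{drifh}) after checking $\Omega$-invariance, i.e.\ that $n_{M}=n_{N}$ for all infinite $M,N$ — which holds because once $k\leq M$ works it works outright. This produces a standard $n$ with the required property, i.e.\ \eqref{MUC2}. Alternatively, and perhaps more cleanly: once $n_{M}$ is known finite, $\st(\varphi)$ together with $\textup{PF-TP}_{\forall}$-style reasoning or simply $\QFAC$ relative to st applied to "$(\forall^{\st}\varphi)(\exists^{\st}n)(\dots)$" delivers the result; I would present whichever is shortest.

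\textbf{From \eqref{MUC2} to \eqref{MUC}$^{\st}$.} Given \eqref{MUC2}, apply $\QFAC^{2,0}$ relative to `st' (which follows from $\textup{HAC}_{\textup{int}}$) to the matrix $(\forall^{\st}\varphi^{2})(\exists^{\st}n^{0})(\dots)$: this yields a standard functional $\Omega^{3}$ with $(\forall^{\st}\varphi^{2})(\forall f,g\leq_{1}1)[\overline{f}\Omega(\varphi)=_{0}\overline{g}\Omega(\varphi)\di\varphi(f)=_{0}\varphi(g)]$, which is precisely $\eqref{MUC}^{\st}$. (One should note that the inner universal quantifier over $f,g$ is internal, so the $\QFAC$ application is over a formula of the right form; and the standardness of $\Omega$ is automatic from the relativised choice.) This closes the cycle.

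\textbf{Summary of the route and the hard point.} The implications \eqref{MUC}$\to$\eqref{druk}, \eqref{MUC2}$\to\eqref{MUC}^{\st}$, and $\eqref{MUC}^{\st}\to\eqref{MUC}$ are essentially bookkeeping with $\textup{PF-TP}_{\forall}$, $\QFAC$, and the standardness conventions of Remark~\ref{tokkiep}. The crux is \eqref{druk}$\to$\eqref{MUC2}: producing a \emph{finite} (then \emph{standard}) modulus out of the "$\approx_{1}$" hypothesis, where the key tools are the idealization axiom $\textup{I}$ (to collapse a sequence of near-counterexamples into one genuine counterexample on Cantor space) and $\Omega$-CA (to standardise the resulting modulus). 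I would double-check that the compactness of Cantor space is really being used only through $\textup{I}$ here, and that no appeal to $(\exists^{2})$ sneaks in, since \eqref{MUC} is inconsistent with $(\exists^{2})$.
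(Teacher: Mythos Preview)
Your overall cycle is sound, and the steps $\eqref{MUC}^{\st}\to\eqref{MUC}\to\eqref{druk}$ and $\eqref{MUC2}\to\eqref{MUC}^{\st}$ (via $\textup{HAC}_{\textup{int}}$, taking the maximum over the finite Herbrand sequence) are correct. There is, however, a definitional problem in your $\eqref{druk}\to\eqref{MUC2}$ step: the quantity $n_{M}$ is not well-defined in $\RCAO$, since the condition $(\forall f,g\leq_{1}1)[\overline{f}k=\overline{g}k\to\varphi(f)=\varphi(g)]$ inside the bounded $\mu$-operator carries type-$1$ universal quantifiers and is not decidable without something like $(\exists^{2})$---which \eqref{MUC} contradicts. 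Fortunately your idealization argument does not actually need $n_{M}$: if for every standard $k$ the modulus $k$ fails, then (verifying the premise of I by taking $k_{0}=\max s$ for a given standard finite $s$) idealization yields a single pair $f,g\leq_{1}1$ with $f\approx_{1}g$ and $\varphi(f)\neq\varphi(g)$, contradicting \eqref{druk}. This already delivers a \emph{standard} witness for \eqref{MUC2}, so your subsequent appeal to $\Omega$-CA is redundant.

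This is a genuinely different route from the paper's. The paper instead defines the canonical approximation
\[
\Xi(\varphi,M):=(\mu y\leq M)(\forall f^{0},g^{0}\in\{0,1\}^{M})\big[\overline{f}y=\overline{g}y\to\varphi(f)=\varphi(g)\big],
\]
where the search ranges over the \emph{finite} set $\{0,1\}^{M}$ and is therefore decidable, shows directly from \eqref{druk} that $\Xi(\varphi,\cdot)$ is $\Omega$-invariant (were $\Xi(\varphi,M)$ infinite, minimality would supply $\sigma,\tau\in\{0,1\}^{M}$ agreeing on an infinite initial segment with $\varphi(\sigma*00\dots)\neq\varphi(\tau*00\dots)$), and then applies $\Omega$-CA to obtain the standard fan functional. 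In particular the paper avoids idealization entirely here---it explicitly flags I as ``exceptional in this context'', reserved for the pointwise-continuity principles of Section~\ref{SB}. Your route via I is shorter, but the paper's route buys the canonical approximation $\Xi$ itself, which is one of the central structural points of the section (cf.\ the discussion of $\Xi$ and $\Ps$ following the theorem and Remark~\ref{flack}).
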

\begin{proof}
The proof of this theorem may be found in \cite{firstHORM}*{\S4}.
By way of a sketch, to obtain $\eqref{druk}\di \eqref{MUC}^{\st}$, assume the former, define the following functional:
\be\label{dagnoor} 
\Xi(\varphi^{2},M^{0}):=(\mu y\leq M)(\forall f^{0},g^{0}\in \{0,1\}^{M})\big[(\overline{f}y=_{0}\overline{g}y) \di \varphi(f)=_{0}\varphi(g) \big], 
\ee
and note that it is $\Omega$-invariant for standard $\varphi^{2}$.  To prove this $\Omega$-invariance, it is convenient to observe that \eqref{druk} implies: 
\be\label{cruxsks}   
(\forall^{\st}\varphi^{2})(\exists^{\st}N)( \forall f^{1},g^{1}\leq_{1}1)\big[ \overline{f}N=_{0}\overline{g}N \di \varphi(f)=_{0}\varphi(g) \big] 
\ee
Using $\Omega$-CA, the standard part of $\Xi(\cdot, M)$ now yields the fan functional.  
To obtain \eqref{MUC} from \eqref{MUC}$^{\st}$, consider Remark \ref{tokkiep} and use PF-TP$_{\forall}$.  
\end{proof}
The functional $\Xi(\cdot, M)$ from \eqref{dagnoor} is called the \emph{canonical approximation} of the fan functional $\Omega(\cdot)$, and if the latter exists we have $(\forall^{\st}\varphi^{2})(\forall M\in \Omega)(\Omega(\varphi)=\Xi(\varphi, M))$.  
Arguably, this representation is much `finer' than Norman's nonstandard characterisation of the continuous functionals in \cite{jadagjan}.
Indeed, in the latter, Normann works in the semantic approach to Nonstandard Analysis and seems to freely invoke the Transfer and Standard Part principles.  Each of these three
aspects is known to yield the existence of non-computable objects, in contrast to the fact that $\RCAO$ is a conservative extension of $\RCA_{0}$.

\medskip

The representation of the \emph{non-obtainable} (standard) fan functional as the \emph{elementary computable} nonstandard object in \eqref{dagnoor} is not an isolated incident (See also Remark \ref{ohdennenboom}).  
Indeed, we now discuss another, less straightforward, approximation of the fan functional.  Indeed, the latter is defined as $\Psi(~\cdot~, \langle\rangle, \Phi)$ in \cite{bergolijf2}*{\S4}, where $\Psi$ and $\Phi$ are defined via bar recursion. 
As is typical for bar recursion, the values $\Psi(s^{0}, \dots)$ and $\Phi(s^{0},\dots)$ are defined in terms of $\Psi(t^{0}, \dots)$ and $\Phi(t^{0},\dots)$ for $|t|>|s|$, i.e.\ a potentially non-terminating recursion not expressible in $\RCAO$.    

\medskip

To guarantee that the aforementioned recursion always halt (and is expressible in $\RCAO$), 
we add an extra condition to $\Psi(s^{0}, \dots)$ and $\Phi(s^{0}, \dots)$ expressing `stop if $|s|=M$' for $M\in \Omega$.  
The canonical approximations $\Ph$ and $\Ps$ for the functionals $\Phi$ and $\Psi$ from \cite{bergolijf}*{\S4} are then defined as follows.   
Note that $\Ph$ and $\Ps$ are well-defined in $\RCAO$, as the nested recursion needed to compute them halts when the input sequence reaches length $M$.   
\bdefi[Canonical approximation]\label{fryg} Define
\[
\Ph(s,\varphi,m,M):= 
\begin{cases}
s*00\dots & |s|\geq M \\
h(s,\varphi, m,M) & \textup{otherwise}
\end{cases},
\]
where
\[
h(s,\varphi,m,M):=
\begin{cases}
\Ph(s*0, \varphi, m,M)& \varphi(s*\Ph(s*0, \varphi, m,M))\ne m\\
\Ph(s*1, \varphi, m,M)&\textup{otherwise}\\
\end{cases}.
 \]
 Define
 \[
\Ps(s,\varphi,M):= 
\begin{cases}
0 & |s|\geq M \\
g(s,\varphi,M) & \textup{otherwise}
\end{cases},
\]
where
 \[
g(s,\varphi,M):= 
\begin{cases}
0 & \begin{tabular}{l}$\textup{if } \varphi(\alpha)=\varphi(s*00\dots)$ for\\ $\alpha:=\Ph(s,\varphi,\varphi(s*00\dots),M)$\end{tabular} \\
1+\max_{i=0,1}(\Ps(s*i,\varphi,M)) & \textup{otherwise}
\end{cases}.
\]
\edefi
The following corollary to Theorem \ref{muck} is then easy to prove.  
\begin{cor}\label{fryg2}
In $\RCAO$, \eqref{druk} implies that {$\Ps(\langle\rangle,\cdot,M)$ is $\Omega$-invariant}.
\end{cor}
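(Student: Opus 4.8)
The plan is to show that, assuming \eqref{druk}, the functional $\Ps(\langle\rangle,\cdot,M)$ does not depend on the choice of infinite $M$ when evaluated at standard arguments, i.e.\ that $(\forall^{\st}\varphi^{2}\leq_{2}1)(\forall M,N\in\Omega)(\Ps(\langle\rangle,\varphi,M)=_{0}\Ps(\langle\rangle,\varphi,N))$. The key observation is that, by Theorem~\ref{muck}, \eqref{druk} is equivalent to \eqref{MUC}, and in particular to \eqref{cruxsks}: every standard $\varphi^{2}$ has a standard modulus of uniform continuity $N_{\varphi}$ on Cantor space. This standard modulus controls the depth to which the nested recursion defining $\Ph$ and $\Ps$ actually needs to descend: once the input sequence $s$ has length $\geq N_{\varphi}$, the value $\varphi(s*00\dots)$ already equals $\varphi(\alpha)$ for any $\alpha$ extending $s$, so the `otherwise' branch in $g$ is never triggered below that level.

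First I would fix a standard $\varphi^{2}\leq_{2}1$ and, using \eqref{cruxsks} together with $\Omega$-CA (or simply the standardness of the least such $N$ via overspill/PF-TP$_{\forall}$ as in Remark~\ref{tokkiep}), obtain a standard $N_{\varphi}$ such that $(\forall f,g\leq_{1}1)[\overline{f}N_{\varphi}=\overline{g}N_{\varphi}\di\varphi(f)=\varphi(g)]$. Next I would prove, by (external, bounded) induction on $N_{\varphi}\dotminus|s|$, the following claim for every $s$ with $|s|\leq N_{\varphi}$ and every infinite $M\geq N_{\varphi}$: both $\varphi(\Ph(s,\varphi,m,M)*00\dots$-irrelevant-tail$)$-type values and $\Ps(s,\varphi,M)$ stabilise, in the sense that $\Ps(s,\varphi,M)=_{0}\Ps(s,\varphi,M')$ for all infinite $M,M'\geq N_{\varphi}$, and moreover $\Ps(s,\varphi,M)\leq N_{\varphi}\dotminus|s|$. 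The base case $|s|=N_{\varphi}$ holds because $\varphi(\alpha)=\varphi(s*00\dots)$ for $\alpha=\Ph(s,\varphi,\varphi(s*00\dots),M)$ (as $\alpha$ and $s*00\dots$ agree on the first $N_{\varphi}$ bits), so $g(s,\varphi,M)=0$ independently of $M$; and the inductive step for $|s|<N_{\varphi}$ reduces $\Ps(s,\varphi,M)$ to $1+\max_{i}\Ps(s*i,\varphi,M)$ when the `otherwise' branch fires and to $0$ otherwise, in both cases by the induction hypothesis a quantity independent of $M$ (one also checks $\Ph(s,\varphi,m,M)$ restricted to its first $N_{\varphi}$ coordinates is $M$-independent, which is what feeds into the $\varphi$-comparison in $g$). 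Applying the claim at $s=\langle\rangle$ gives $\Ps(\langle\rangle,\varphi,M)=_{0}\Ps(\langle\rangle,\varphi,N)$ for all infinite $M,N\geq N_{\varphi}$, and since $N_{\varphi}$ is standard, this covers \emph{all} infinite $M,N$; as $\varphi$ was an arbitrary standard function bounded by $1$, this is exactly $\Omega$-invariance of $\Ps(\langle\rangle,\cdot,M)$.

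The main obstacle I anticipate is the bookkeeping in the inductive step: $\Ph$ is defined by a nested recursion in which the value $\Ph(s*0,\varphi,m,M)$ is consulted \emph{inside} the test $\varphi(s*\Ph(s*0,\varphi,m,M))\ne m$ that decides whether to recurse on $s*0$ or $s*1$, so one must verify that the first $N_{\varphi}$ bits of $\Ph(s,\varphi,m,M)$ — which is all that matters for any subsequent $\varphi$-evaluation — are determined by data at levels $\leq N_{\varphi}$ and hence are $M$-independent. This requires carrying along in the induction hypothesis a statement about $\Ph$, not just about $\Ps$, and being careful that the `stop if $|s|\geq M$' clause only ever appends a tail of $0$'s below the window of length $N_{\varphi}$ that $\varphi$ can see, so it has no effect on the relevant $\varphi$-values. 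Once that is set up, everything else is routine: the induction is over a standard length $N_{\varphi}$, all formulas involved are internal, and no use of Transfer beyond what is already packaged into Theorem~\ref{muck} and Remark~\ref{tokkiep} is needed.
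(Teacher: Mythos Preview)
Your approach is correct and is precisely the argument the paper leaves implicit: the paper gives no proof at all, stating only that the corollary ``is then easy to prove'' from Theorem~\ref{muck}. The key idea---that \eqref{druk} via \eqref{cruxsks} yields a standard modulus $N_{\varphi}$, below which the $\Ps$-recursion already returns $0$ because $\Ph(s,\varphi,m,M)$ extends $s$ and hence agrees with $s*00\dots$ on the first $N_{\varphi}$ bits---is exactly right, as is your observation that one must simultaneously track $M$-independence of the first $N_{\varphi}$ bits of $\Ph$ in the induction.

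Two minor corrections. First, drop the restriction $\varphi^{2}\leq_{2}1$: the principles \eqref{druk} and \eqref{MUC} concern \emph{all} standard type-2 functionals, and $\Omega$-invariance of $\Ps(\langle\rangle,\cdot,M)$ is asserted for arbitrary standard $\varphi^{2}$. Second, your induction is not external: the formulas involved are internal (no `st' occurs once $\varphi$, $N_{\varphi}$, $M$, $M'$ are fixed), the quantifier over binary $s$ of length $N_{\varphi}-j$ is bounded, and the induction runs over the standard range $0\leq j\leq N_{\varphi}$, so ordinary quantifier-free induction in $\RCAO$ suffices.
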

By the previous theorem, if the fan functional exists, it equals $\Ps(\langle\rangle,\cdot,M)$ in the standard world.  
The question if similar results exist for general bar recursive functionals, shall be explored in \cite{sambar}.  

\medskip

In light of \cite{kohlenbach2}*{Prop.\ 3.6-3.7} and the proof of the theorem, Corollary \ref{tochwelbela} below seems obvious.  Recall the usual definitions 
of real number and associated notions, introduced in Notation \ref{keepintireal}.
We consider the `positivity' property of real functions:
\be\label{POS}\tag{$\mathfrak{D}$}\textstyle
(\forall F:\R\di \R)\big[(\forall x\in [0,1])F(x)>0 \di (\exists k)(\forall x\in [0,1])F(x)>\frac{1}{k}\big].
\ee
 \begin{cor}\label{tochwelbela}
In $\RCAO$, \eqref{MUC} implies \eqref{POS}$^{\st}$ and 
\be\label{CONT}
(\forall^{\st}F:\R\di \R)(\forall x^{1},y^{1}\in [0,1])(x\approx y \di F(x)\approx F(y)). \tag{$\mathfrak{C}$}
\ee
\end{cor}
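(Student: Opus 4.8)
The plan is to reduce both assertions to the single fact that $\eqref{MUC}$ endows every standard $F\colon\R\di\R$ with a \emph{standard} modulus of uniform continuity on $[0,1]$. First I would invoke Theorem~\ref{muck}, which gives $\eqref{MUC}\asa\eqref{MUC}^{\st}\asa\eqref{druk}\asa\eqref{MUC2}$; thus we may work with a \emph{standard} fan functional $\Omega^{3}$ and with $\eqref{MUC2}$ available. Now fix standard $F\colon\R\di\R$ (hence extensional). As in \cite{kohlenbach2}*{Prop.\ 3.6--3.7} and the proof of Theorem~\ref{muck}, consider the standard, non-expansive surjection $\rho\colon\{0,1\}^{\N}\di[0,1]$ sending $\beta$ to the real $\sum_{i}\beta(i)2^{-i-1}$, so that $\overline{\beta}n=\overline{\gamma}n\di|\rho(\beta)-\rho(\gamma)|\leq 2^{-n}$, and every real of $[0,1]$ lies in its range. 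For standard $k^{0}$, the functional sending $\beta\leq_{1}1$ to the $k$-th rational approximant of $F(\rho(\beta))$ is a standard type-two functional on Cantor space, so by $\eqref{MUC2}$ (equivalently by applying $\Omega$) there is a standard $n_{k}$ such that $F(\rho(\beta))$ and $F(\rho(\gamma))$ have equal $k$-th approximants whenever $\overline{\beta}n_{k}=\overline{\gamma}n_{k}$. To turn this into an $\eps$-$\delta$ statement, note that any $x,y\in[0,1]$ with $|x-y|<2^{-n_{k}}$ lie in one, or in two consecutive, dyadic intervals of length $2^{-n_{k}}$; a real in the interior of such an interval $[j2^{-n_{k}},(j+1)2^{-n_{k}}]$ has a $\rho$-preimage in the cylinder given by the $n_{k}$-bit code of $j$ (every real of $[0,1]$ having a binary expansion, a $\WKL$-strength fact available here), while a shared endpoint has preimages in both adjacent cylinders. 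Routing $x$ to $y$ through at most one such endpoint yields $|F(x)-F(y)|\leq 2^{-k+2}$, so $g\colon k\mapsto n_{k+2}$ is a modulus of uniform continuity of $F$ on $[0,1]$, and it is \emph{standard}, being obtained by a term in the standard data $\Omega$ and $F$.

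Granting this, $\eqref{CONT}$ is immediate: if $x\approx y$ in $[0,1]$ then $|x-y|$ is infinitesimal, hence $|x-y|<2^{-g(k)}$ for every standard $k$ (as $g(k)$ is standard), so $|F(x)-F(y)|\leq 2^{-k}$ for all standard $k$, i.e.\ $F(x)\approx F(y)$. For $\eqref{POS}^{\st}$, assume $(\forall^{\st}x\in[0,1])F(x)>0$. Since $\eqref{MUC}$ yields $\WKL$-strength, the extreme value theorem, recast via the standard functional $\Omega$ and the standard modulus $g$, produces the value $m^{*}:=\min_{x\in[0,1]}F(x)$ together with a point $x_{0}\in[0,1]$ attaining it, both given by a term in $F$, $g$, $\Omega$ and hence \emph{standard}. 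As $x_{0}$ is standard, $m^{*}=F(x_{0})>0$, so $m^{*}>\frac{1}{k}$ for some standard $k$; therefore $(\forall^{\st}x\in[0,1])F(x)\geq m^{*}>\frac{1}{k}$, as required. Alternatively one argues by contradiction: a failure of the conclusion yields, via $\textup{HAC}_{\textup{int}}$ and the density of standard dyadic rationals in $[0,1]$, a standard sequence of standard points of $[0,1]$ along which $F$ tends to $0$, whose standard Bolzano--Weierstrass limit $x_{0}\in[0,1]$ is then standard with $F(x_{0})=0$ by continuity, contradicting $F(x_{0})>0$.

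The main obstacle is the first paragraph: honestly matching the fan functional's uniform continuity on Cantor space to genuine $\eps$-$\delta$ continuity on $[0,1]$, given that $\rho$ is not injective (infinitely close reals straddling a dyadic rational can have wildly different binary expansions), which forces the detour through shared interval endpoints rather than a direct lifting; and, throughout, one must check that standardness survives the $\WKL$-strength arguments, which it does precisely because, by Theorem~\ref{muck}, the fan functional itself is standard, so every object extracted (moduli, minima, Bolzano--Weierstrass limits) is a term in standard data. Everything else is routine bookkeeping with the definitions of $\approx$ and of the standardness predicate.
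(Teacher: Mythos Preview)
Your approach to \eqref{CONT} is correct but takes a longer route than the paper. The paper argues by contraposition directly from \eqref{druk} (equivalent to \eqref{MUC} by Theorem~\ref{muck}): given $x_{1}\approx x_{2}$ in $[0,1]$ with $F(x_{1})\not\approx F(x_{2})$, pick standard $k_{0}$ with $|F(x_{1})-F(x_{2})|>1/k_{0}$, choose binary expansions $\alpha_{i}$ of $x_{i}$ with $\alpha_{1}\approx_{1}\alpha_{2}$ (citing \cite{polarhirst}), and observe that the type-$2$ functional $\varphi(\alpha,k_{0})$ returning the index $j$ with $j/2^{k_{0}}\leq[F(\sum_{i}\alpha(i)2^{-i})](k_{0})<(j+1)/2^{k_{0}}$ then violates \eqref{druk}. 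Your detour through a full standard modulus of uniform continuity works, and your endpoint-routing handles exactly the non-injectivity of $\rho$ that the paper dispatches via \cite{polarhirst}, but it is considerably more labor for the same conclusion.

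For \eqref{POS}$^{\st}$ there is a genuine gap. The claim that a minimizer $x_{0}$ is ``given by a term in $F,g,\Omega$ and hence standard'' is not justified: minimizers are in general not unique, so no canonical term produces one, and PF-TP$_{\forall}$ cannot be invoked to force standardness. Your Bolzano--Weierstrass alternative also overshoots: sequential Bolzano--Weierstrass is equivalent to $\ACA_{0}$ over $\RCA_{0}$, whereas \eqref{MUC} only yields $\WKL$-strength, so the ``standard limit point'' you need is not available from the tools at hand. The paper sidesteps both issues with a short nonstandard argument that exploits the just-established \eqref{CONT} rather than any extremum principle: for infinite $M$, let $N_{0}$ be the least $n\leq M$ such that $[F(i/M)](M)>1/n$ for all $i\leq M$; one checks $N_{0}$ is finite, and then $(\forall x\in[0,1])(F(x)>\tfrac{1}{2N_{0}})$ follows by \eqref{CONT}. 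No minimizer and no subsequence extraction are needed.
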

\begin{proof}
For \eqref{CONT}, define $\varphi(\alpha,k_{0})$ as that $j$ such that $\frac{j}{2^{k_{0}}}\leq [F(\sum_{i=0}^{\infty}\frac{\alpha(i)}{2^{i}})](k_{0})< \frac{j+1}{2^{k_{0}}}$, where $[z](n)=w_{n}$ for $z$ represented by the sequence $w_{n}^{1}$.  
If standard $F:\R\di \R$ does not satisfy \eqref{CONT}, there is finite $k_{0}$ such that $\varphi^{2}(\cdot,k_{0})$ is not nonstandard continuous;  Indeed, if $x_{1}\approx x_{2}$ in $[0,1]$ are such that $F(x_{1})\not\approx F(x_{2})$, then let standard $k_{0}$ be such that $|F(x_{1})-F(x_{2})|>\frac{1}{k_{0}}$ and let $\alpha_{i}\leq_{1}1$ be such that $x_{i}=\sum_{j=0}^{\infty}\frac{\alpha_{i}(j)}{2^{j}}$, i.e.\ $\alpha_{i}$ is a binary expansion of $x_{i}$.  
Note that we can choose these expansions such that $\alpha_{1}\approx_{1} \alpha_{2}$ (See \cite{polarhirst}*{p.\ 305}).  We now have $\varphi(\alpha_{1},k_{0})\ne_{0}\varphi(\alpha_{2},k_{0})$ since $|F(x_{1})-F(x_{2})|>\frac{1}{k_{0}}$.

\medskip

To establish \eqref{POS}$^{\st}$, let $F$ be as in the latter's antecedent and define $N_{0}$ as the least $n\leq M$ such that for all $i\leq M$, we have $[F(\frac{i}{M})](M)>\frac{1}{n}$.  
By assumption, $N_{0}$ is finite and we have $(\forall x\in [0,1])(F(x)>\frac{1}{2N_{0}})$ by continuity \eqref{CONT}.  
\end{proof}
The following remark on extensionality is essential for what follows.  
\begin{rem}\label{sextrem}\rm
Note that both \eqref{MUC} and \eqref{druk} immediately imply \eqref{EXT}$^{\st}$ limited to Cantor space, i.e.\ standard extensionality as follows:  
\be\label{sExt}
(\forall^{\st}\varphi^{2})(\forall^{\st}\alpha^{1},\beta^{1}\leq_{1}1)(\alpha\approx_{1}\beta \di \varphi(\alpha)=\varphi(\beta)). 
\ee
Experience bears out that this property is extremely useful, if not essential, in establishing equivalences between higher-type principles (See e.g.\ \cite{samzoo, firstHORM}).  
However, in the next section, we shall consider principles which do not (seem to) imply standard extensionality \eqref{sExt}, while the axiom \eqref{EXT}$^{\st}$ is unavailable in $\RCAO$ by \cite{brie}*{Problem 3, p.\ 1973}.  
By the following theorem, a weak version of choice suffices to remedy this absence.  
\begin{thm}\label{halleh}
In $\RCAO+\QFAC^{2,0}$, every standard functional $\varphi^{1\di 1}$ is standard extensional, i.e.\ $(\forall^{\st} f^{1},g^{1}, \varphi^{1\di 1} )(f\approx_{1}g \di \varphi(f)\approx_{1}\varphi(g))$.
\end{thm}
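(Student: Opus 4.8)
The plan is to argue by a case distinction on whether $(\exists^{2})$ holds, after first reducing to the case of functionals of type $2$. For the reduction, note it suffices to show that every standard $\psi^{2}$ satisfies $(\forall^{\st}f^{1},g^{1})(f\approx_{1}g\di \psi(f)=_{0}\psi(g))$: indeed, given standard $\varphi^{1\di 1}$ and standard $f\approx_{1}g$, apply this to the standard functional $\psi_{m}$ defined by $\psi_{m}(h^{1}):=\varphi(h)(m)$ for each standard $m^{0}$, which yields $(\forall^{\st}m^{0})(\varphi(f)(m)=_{0}\varphi(g)(m))$, i.e.\ $\varphi(f)\approx_{1}\varphi(g)$. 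So fix standard $\psi^{2}$ and standard $f,g$ with $f\approx_{1}g$.

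\emph{Case 1: $(\exists^{2})$ holds.} Then $\paai$ is available by Theorem~\ref{markje}. Let $h^{1}$ be the standard function with $h(n)=_{0}0$ iff $f(n)=_{0}g(n)$ (and $h(n)=_{0}1$ otherwise); $h$ is standard since it is obtained from the standard $f,g$ by a term of $\RCAO$, and $f\approx_{1}g$ gives $(\forall^{\st}n^{0})(h(n)=_{0}0)$. Applying $\paai$ with $h$ as (permitted) standard parameter yields $(\forall n^{0})(h(n)=_{0}0)$, i.e.\ $f=_{1}g$, whence $\psi(f)=_{0}\psi(g)$ by \eqref{EXT}. (Here $\QFAC^{2,0}$ is not even needed.)

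\emph{Case 2: $\neg(\exists^{2})$.} Then every type two functional is continuous by \cite{kohlenbach2}*{Prop.\ 3.7}, i.e.\
\[
(\forall F^{2},x^{1})(\exists N^{0})(\forall y^{1})\big[\overline{x}N=_{0}\overline{y}N\di F(x)=_{0}F(y)\big].
\]
Coding the pair $(F,x)$ as a single type two object and using $\QFAC^{2,0}$ (together with the standard fact that continuous type two functionals admit a modulus-of-continuity functional in $\RCA_{0}^{\omega}+\QFAC$; cf.\ \cite{kohlenbach2,kohlenbach4}), we obtain a modulus functional $\Phi$. Its defining property is internal and parameter-free, so by the transfer argument of Remark~\ref{tokkiep} (contraposition of PF-TP$_{\forall}$) we may take $\Phi$ \emph{standard}. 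Then $N:=\Phi(\psi,f)$ is standard, being a standard functional applied to standard inputs; since $f\approx_{1}g$ and $N$ is standard, $\overline{f}N=_{0}\overline{g}N$, and hence $\psi(f)=_{0}\psi(g)$ by the modulus property. This completes Case 2 and the proof.

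I expect the only real obstacle to be in Case 2: passing from mere pointwise continuity of all type two functionals to the existence of a \emph{standard} modulus-of-continuity functional. The obstruction is that the modulus property $(\forall y^{1})(\overline{f}N=_{0}\overline{y}N\di\psi(f)=_{0}\psi(y))$ is only $\Pi^{0}_{1}$, so $\QFAC^{2,0}$ does not literally apply to it; one must invoke the (known) extractability of modulus functionals for continuous type two functionals, after which PF-TP$_{\forall}$ supplies standardness and the rest is routine. Case~1, by contrast, is immediate once Theorem~\ref{markje} and \eqref{EXT} are in hand.
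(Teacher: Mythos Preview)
Your Case~1 is correct, but Case~2 has the gap you yourself identify and do not actually close. The ``standard fact'' you invoke---that a modulus-of-continuity functional is extractable in $\RCAo+\QFAC^{2,0}$ once all type~2 functionals are continuous---is not established by the references you cite: \cite{kohlenbach4}*{Prop.~4.4} converts a \emph{given} modulus into an associate and back, it does not manufacture a modulus from bare pointwise continuity. The uniform modulus functional you need is exactly \eqref{POC}, which the paper treats in Section~\ref{LOCO} as a separate, nontrivial principle (it fails in ECF). Since the matrix of the continuity statement carries a type-$1$ universal quantifier over $y$, $\QFAC^{2,0}$ does not apply, and you offer no other route to $\Phi$.

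The paper's proof sidesteps all of this with one observation. Instead of continuity, use extensionality \eqref{EXT} for $\varphi^{1\di 1}$ in the form
\[
(\forall \varphi, f^{1}, g^{1}, k^{0})(\exists N^{0})\big[\overline{f}N=_{0}\overline{g}N \di \overline{\varphi(f)}k=_{0}\overline{\varphi(g)}k\big],
\]
which holds trivially: if $f\ne_{1}g$, take $N$ past a point of disagreement so the antecedent fails; if $f=_{1}g$, the consequent holds for any $N$ by \eqref{EXT}. Because $g$ is a \emph{parameter} here rather than universally quantified inside, the bracketed formula is quantifier-free, and $\QFAC^{2,0}$ applies directly to yield a Skolem functional $\Gamma$. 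Its defining property is internal and parameter-free, so by (the contraposition of) PF-TP$_{\forall}$ one may take $\Gamma$ standard. Then for standard $\varphi,f,g,k$ with $f\approx_{1}g$, the number $\Gamma(\varphi,f,g,k)$ is standard, the antecedent holds, and hence $\overline{\varphi(f)}k=\overline{\varphi(g)}k$ for every standard $k$, i.e.\ $\varphi(f)\approx_{1}\varphi(g)$. No case split, no continuity assumption, no reduction to type~$2$.
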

\begin{proof}
The axiom of extensionality for type $1\di1$-functionals implies:
\[
(\forall \varphi^{2}, f^{1}, g^{1}, k^{0})(\exists N^{0})(\overline{f}N=_{0}\overline{g}N \di \overline{\varphi(f)}k=\overline{\varphi(g)}k).
\]
Applying QF-AC$^{2,0}$, we obtain:
\be\label{hammag}
(\exists \Gamma^{3})\big[(\forall \varphi^{2}, f^{1}, g^{1}, k^{0})(\overline{f}\Gamma(\varphi, f, g, k)=_{0}\overline{g}\Gamma(\varphi, f, g, k) \di \overline{\varphi(f)}k=\overline{\varphi(g)}k)\big].
\ee
The formula in square brackets in \eqref{hammag} is internal and has no parameters but $\Gamma$, and we may assume that $\Gamma$ is standard by applying (the contraposition of) PF-TP$_{\forall}$.
For standard $\varphi^{2}, f^{1}, g^{1}$ such that $f\approx_{1}g$, we then have $\overline{f}\Gamma(\varphi, f, g, k)=_{0}\overline{g}\Gamma(\varphi, f, g, k)$ for standard $k$ as $\Gamma(\varphi, f, g, k)$ is standard.  
Hence, we have $\overline{\varphi(f)}k=\overline{\varphi(g)}k$ for all standard $k$ by \eqref{hammag}, implying $\varphi(f)\approx_{1}\varphi(g)$. 
\end{proof}
It should be noted that certain (unrelated) equivalences in \cites{firstHORM, samzoo} were proved in our base theory extended by $\QFAC^{2,0}$.  In Friedman-Simpson-style Reverse Mathematics, certain results are similarly only proved over the base theory extended with extra induction, usually $I\Sigma_{2}$ or $B\Sigma_{2}$.  Hunter notes in \cite{hunterphd}*{\S2.1.2} that any $\QFAC^{\sigma, 0}$ still results in a conservative extension of $\RCA_{0}$.        
\end{rem}
We finish this section with a remark on our choice of framework.  
\begin{rem}\label{flack}\rm
As a consequence of the above results, we observe that the fan functional $\Omega$ equals its canonical approximations $\Xi$ and $\mathbb{ps}$ from \eqref{dagnoor} and Corollary~\ref{fryg2}.  
The apparent restriction to \emph{standard input} is only a limitation of our choice of framework: Indeed, in \emph{stratified} Nonstandard Analysis, the unary predicate `$\st(x)$' is replaced by the binary predicate `$x\rel y$', to be read `$x$ is standard relative to $y$' (\cites{hrbacek3, hrbacek4, hrbacek5, aveirohrbacek, peraire}).  In this framework, we could prove the following:  
\[
(\forall \varphi^{2})(\forall M\sler \varphi)\big[\Xi(\varphi, M)=_{0}\Ps(\langle\rangle, \varphi, M)=_{0}\Omega(\varphi)\big],
\]
where $x\sler y$ is $\neg (x\rel y)$, i.e.\ $x$ is \emph{nonstandard relative} to $y$.  
In other words, in stratified Nonstandard Analysis, the canonical approximation (of the fan functional) works \emph{for any object}, not just the standard ones.    
Of course, we have chosen Nelson's framework for this paper, as this approach is more mainstream.   
\end{rem}

\subsection{Supremum functionals}\label{fafi}
In this section, we establish the EMT for the supremum functional \eqref{SUP}, defined as follows: 
\be\label{SUP}\tag{SUP}
(\exists G^{3})(\forall \varphi^{2})\big[ (\forall f^{1}\leq_{1}1 )(\varphi(f)\leq_{0} G(\varphi))\wedge (\exists g^{1}\leq_{1}1)(G(\varphi)=_{0}\varphi(g))    \big].
\ee
\be\label{druk4}
(\forall^{\st}\varphi^{2})(\exists^{\st}k^{0}_{0})\big[ (\forall f^{1}\leq_{1}1 )(\varphi(f)\leq k_{0}) \wedge (\exists^{\st} g^{1}\leq_{1}1)(k_{0}=\varphi(g))].  \tag{$\mathfrak{N}$}
\ee
Let \eqref{druk4}$^{\dagger}$ and \eqref{SUP}$^{\dagger}$ be \eqref{druk4} and \eqref{SUP} with the additional assumption that there is $g^{0}\leq_{0}1$ such that $k_{0}=\varphi(g*00\dots)$ in the second conjunct.  

\medskip

\noindent
As it turns out, \eqref{SUP} is quite similar to the principles $\tilde{F}$ and $\widehat{F}$ from \cite{kohlenbachearly}, and also to the principle $ F_{0}$ from \cite{kohlenbach8}.  
Indeed, instead of stating the existence of an upper bound which is also attained as in \eqref{SUP} and \eqref{druk4}, we could state the existence of a maximum 
as in the aforementioned axioms $\tilde{F}$, $\widehat{F}$ and ${F_{0}}$, and the equivalences from the following theorem would go through in essentially the same way.   
\begin{thm}\label{muck2}
In $\RCAO$, we have  $  (\eqref{SUP}^{\dagger})^{\st}\asa \eqref{druk4}^{\dagger}$.  In $\RCAO+\QFAC^{2,0}$: 
\be
\eqref{MUC}\asa \eqref{SUP}\asa \eqref{SUP}^{\st}\asa \eqref{druk4}\asa \eqref{SUP}^{\dagger}\asa \eqref{druk4}^{\dagger}. \label{kidi}
\ee
\end{thm}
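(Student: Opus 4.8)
The plan is to establish the chain \eqref{kidi} by first linking the ``dagger'' versions to one another over $\RCAO$ alone (this is the first claim of the theorem), and then using $\QFAC^{2,0}$ to close the loop with \eqref{MUC} and to remove the dagger restriction. For the first claim, $(\eqref{SUP}^{\dagger})^{\st}\asa \eqref{druk4}^{\dagger}$, the implication $(\eqref{SUP}^{\dagger})^{\st}\di \eqref{druk4}^{\dagger}$ is immediate: instantiate the standard functional $G$ at a standard $\varphi^{2}$, obtain the standard bound $k_{0}:=G(\varphi)$, and note the witness $g$ attaining $k_{0}$ is standard because $G$ and $\varphi$ are. For the converse, I would imitate the proof of Theorem~\ref{muck}: assuming \eqref{druk4}$^{\dagger}$, define a canonical approximation
\[
\Theta(\varphi^{2},M^{0}):=\max_{f^{0}\in\{0,1\}^{M}}\varphi(f*00\dots),
\]
an elementary computable nonstandard object. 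The point is that \eqref{druk4}$^{\dagger}$ forces $\Theta(\cdot,M)$ to be $\Omega$-invariant on standard inputs: for standard $\varphi$, the supremum over $\{0,1\}^{1}$-strings (padded with zeros) is already attained at some standard $g^{0}\leq_{0}1$ by the dagger clause, and the bound clause shows no longer string can exceed it, so $\Theta(\varphi,M)=\Theta(\varphi,N)=k_{0}$ for all infinite $M,N$. Then $\Omega$-CA (Theorem~\ref{drifh}) delivers a standard $G$ with $G(\varphi)=\Theta(\varphi,N)$ for standard $\varphi$ and infinite $N$, and one checks $G$ witnesses $\eqref{SUP}^{\dagger}$ restricted to standard $\varphi$, i.e.\ $(\eqref{SUP}^{\dagger})^{\st}$.

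For the main chain \eqref{kidi} in $\RCAO+\QFAC^{2,0}$, I would route it as follows. The equivalence $\eqref{SUP}\asa\eqref{SUP}^{\st}$ is by Remark~\ref{tokkiep}: the bracketed formula in \eqref{SUP} is internal and parameter-free, so PF-TP$_\forall$ (and its contraposition) gives standardness of $G$ and the transfer $\eqref{SUP}^{\st}\di\eqref{SUP}$. Next, $\eqref{SUP}^{\st}\di\eqref{druk4}$ is the same one-line argument as above (standard $G$, standard $\varphi$, standard bound and witness), and conversely $\eqref{druk4}\di\eqref{SUP}^{\st}$ follows the $\Omega$-CA template but \emph{now without the dagger restriction}: the witness $g^{1}\leq_{1}1$ attaining $k_{0}$ need not be of the form $g^{0}*00\dots$, so instead of the finite max over $\{0,1\}^{M}$ I would use $\Theta(\varphi,M):=\max_{f^{0}\in\{0,1\}^{M}}\varphi(f*00\dots)$ together with an argument that standard $\varphi$ restricted to Cantor space is determined by its values on such finite-support sequences — this is exactly where Theorem~\ref{halleh}, hence $\QFAC^{2,0}$, is needed, to guarantee the standard extensionality \eqref{sExt} that makes $\varphi$ on Cantor space approximable by its values on eventually-zero sequences. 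The dagger versions $\eqref{SUP}^{\dagger}\asa\eqref{druk4}^{\dagger}$ sit inside this picture via the first claim plus PF-TP$_\forall$, and $\eqref{SUP}^{\dagger}\di\eqref{SUP}$, $\eqref{druk4}^{\dagger}\di\eqref{druk4}$ hold since (by \eqref{sExt} again, available from $\QFAC^{2,0}$) every standard $\varphi$ is standard-extensional on Cantor space, so a maximum of $\varphi(g*00\dots)$ over $g^0$ is a maximum of $\varphi(g)$ over all $g^1\leq_1 1$.

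Finally, for the anchor $\eqref{MUC}\asa\eqref{SUP}$: the direction $\eqref{SUP}\di\eqref{MUC}$ is essentially Kohlenbach's \cite{kohlenbach2}*{Prop.\ 3.6} — from a supremum functional one extracts a modulus of uniform continuity on Cantor space, hence the fan functional — and I would cite that, or re-derive it by noting $\eqref{SUP}^{\st}\di\eqref{druk4}\di\eqref{MUC2}$ is plausible but in fact the cleaner route is $\eqref{SUP}\di(\exists$ modulus$)\di\eqref{MUC}$ internally, then the whole chain is standard-world-transferred. For $\eqref{MUC}\di\eqref{SUP}$, use the fan functional $\Omega$ to bound the search: given $\varphi^2$, the value $\overline{f}\,\Omega(\varphi)$ determines $\varphi(f)$ on $f\leq_1 1$, so $G(\varphi):=\max\{\varphi(\sigma*00\dots):\sigma\in\{0,1\}^{\Omega(\varphi)}\}$ is a well-defined type-3 functional satisfying \eqref{SUP}, and this is carried out internally in $\RCAO$ without extra axioms.

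The main obstacle, as usual in this circle of results, is the $\Omega$-invariance verification in $\eqref{druk4}\di\eqref{SUP}^{\st}$ in the non-dagger case: one must show that the finite approximation $\Theta(\varphi,M)$ stabilizes as $M\to\infty$, and the attained-maximum witness in \eqref{druk4} is an arbitrary $g^1\leq_1 1$ rather than an eventually-zero sequence, so there is a genuine gap between ``the finite max over $\{0,1\}^M$'' and ``the true supremum over Cantor space'' that must be bridged by a continuity/extensionality argument — precisely the role of $\QFAC^{2,0}$ via Theorem~\ref{halleh}. The dagger hypothesis is exactly what removes this obstacle, which is why that equivalence alone survives over plain $\RCAO$.
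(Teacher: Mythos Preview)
There is a genuine gap in your treatment of the direction $(\eqref{SUP}^{\dagger})^{\st}\di\eqref{druk4}^{\dagger}$, which you claim is ``immediate''. Recall that appending `$\st$' to \eqref{SUP}$^{\dagger}$ relativises \emph{every} quantifier, so $(\eqref{SUP}^{\dagger})^{\st}$ only yields
\[
(\forall^{\st} f^{1}\leq_{1}1)(\varphi(f)\leq_{0} G(\varphi)),
\]
whereas the first conjunct of $\eqref{druk4}^{\dagger}$ demands $(\forall f^{1}\leq_{1}1)(\varphi(f)\leq k_{0})$ with an \emph{unrestricted} quantifier over (possibly nonstandard) $f$. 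Simply setting $k_{0}:=G(\varphi)$ gives no control over $\varphi(f)$ at nonstandard $f$, so the implication is not immediate. The same defect recurs in your ``same one-line argument'' for $\eqref{SUP}^{\st}\di\eqref{druk4}$. The paper handles this by the Remark~\ref{tokkiep} mechanism you yourself invoke elsewhere: since the supremum functional is \emph{uniquely} determined, the \eqref{durfall}-type axiom names it by a constant, and PF-TP$_{\forall}$ then transfers $(\eqref{SUP}^{\dagger})^{\st}$ to the full internal $\eqref{SUP}^{\dagger}$, from which $\eqref{druk4}^{\dagger}$ is genuinely immediate. Your proposal for the converse $\eqref{druk4}^{\dagger}\di(\eqref{SUP}^{\dagger})^{\st}$ via the canonical approximation $\Theta(\varphi,M)$ and $\Omega$-CA is correct and matches the paper's argument.

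A second, smaller divergence: for $\eqref{SUP}\di\eqref{MUC}$ you gesture at Kohlenbach's Prop.~3.6 or an indirect route through $\eqref{MUC2}$. The paper instead gives a direct argument that makes the role of $\QFAC^{2,0}$ explicit: apply $\QFAC^{2,0}$ to the extensionality axiom \eqref{EXT} to extract a modulus $Y(\varphi,f,g)$, then feed $H(\varphi,f\oplus g):=Y(\varphi,f,g)$ into the supremum functional $G$ from \eqref{SUP}; the value $G(H(\varphi,\cdot))$ is a uniform bound on the modulus of extensionality, which is exactly a modulus of uniform continuity. This is where $\QFAC^{2,0}$ actually earns its keep in the chain, and it also drives the derivation of the continuity statement \eqref{belastingdruk2} needed for $\eqref{druk4}\di\eqref{druk4}^{\dagger}$ (a step your outline does not address). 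Your intuition that standard extensionality via Theorem~\ref{halleh} is the bridge is right, but the mechanism is bounding the extensionality modulus, not approximating $\varphi$ by its values on eventually-zero sequences directly.
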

\begin{proof}
For the equivalences in \eqref{kidi}, first  assume \eqref{MUC} and define the functional $\Gamma(\varphi):=\max_{|f^{0}|=\Omega(\varphi)\wedge f\leq_{0^{*}}1}\varphi(f*00\dots)$.  
By Theorem~\ref{muck}, \eqref{SUP}, \eqref{SUP}$^{\st}$, and \eqref{druk4}, and the daggered versions, now follow.  Next, consider \eqref{SUP} and the axiom of extensionality as follows:
\be\label{ingenu}
(\forall   f^{1},g^{1}\leq_{1}1, \varphi^{2})(\exists N^{0})\big[\overline{f}N=_{0}\overline{g}N\di \varphi(f)=_{0}\varphi(g)], 
\ee
Modulo some trivial coding, let $Y(\varphi, f,g)$ be the functional obtained from applying $\QFAC^{2,0}$ to \eqref{ingenu}.   
Define $H^{3}$ as $H(\varphi, f\oplus g)=Y(\varphi, f, g)$ and for $G$ from \eqref{SUP} consider $\Gamma(\varphi):=G(H(\varphi, \cdot))$.  By \eqref{SUP}, the previous yields:       
\be\label{belastingdruk}
(\forall  f^{1},g^{1}\leq_{1}1, \varphi^{2})(\exists N^{0}\leq \Gamma(\varphi)))\big[\overline{f}N=_{0}\overline{g}N\di \varphi(f)=_{0}\varphi(g)],   
\ee
and hence we obtain \eqref{MUC}.  Similarly, assuming \eqref{SUP}$^{\st}$, use $\QFAC^{2,0}$ to obtain \eqref{ingenu}$^{\st}$ via Theorem \ref{halleh}.  Then note that HAC$_{\textup{int}}$ implies $\QFAC^{2,0}$ relative to `st' and 
obtain \eqref{belastingdruk}$^{\st}$, and \eqref{MUC} follows by Theorem \ref{muck}.      

\medskip

Finally, to derive the remaining applications in \eqref{kidi}, assume \eqref{druk4} and consider the following two proofs:  
First of all, bring the type 1-existential quantifier in \eqref{druk4} alongside the type 0-existential quantifier, and apply HAC$_{\textup{int}}$ to obtain a standard functional $\Gamma$ such that there is $(k_{0}, g)\in \Gamma(\varphi)$ as in \eqref{druk4}.  
Note that by the second conjunct of \eqref{druk4}, we can test which is the right pair in the finite sequence $\Gamma(k_{0},g)$.  Hence, \eqref{SUP}$^{\st}$ follows and with it $\eqref{MUC}$.

\medskip

Secondly, define $\Psi(\psi,M)$ as the pair consisting of the least $k\leq M$ such that $(\forall f^{0}\leq_{0}1 )(|f|=M\wedge \psi(f*00\dots)\leq k )$, and the left-most binary $\sigma^{0}$ of least length $|\sigma|\leq M$ such that $\varphi(\sigma*00)=k$, if such exist, and $(0,\langle\rangle)$ otherwise.
To see that $\Psi(\cdot,M)$ is $\Omega$-invariant, consider standard $\psi^{2}$ and proceed as follows:  As in the previous part of the proof, obtain \eqref{ingenu}$^{\st}$ and apply $\QFAC^{2,0}$ relative to `st' to obtain the same functional $Y$.  
By \eqref{druk4}, for every standard $\varphi$ there is standard $k_{1}$ such that $Y(\varphi, f, g)\leq_{0}k_{1}$ for any binary sequences $f, g$.  Hence, we obtain 
\be\label{belastingdruk2}
(\forall^{\st}\varphi^{2})(\exists^{\st}k_{1})(\forall^{\st}  f^{1},g^{1}\leq_{1}1)\big[\overline{f}k_{1}=_{0}\overline{g}k_{1}\di \varphi(f)=_{0}\varphi(g)],   
\ee
By the continuity expressed in \eqref{belastingdruk2}, $(\exists^{\st}g^{1}\leq_{1}1)(\psi(g)=k_{0})$ implies that $(\exists^{\st}\sigma_{0}^{0}\leq_{0}1)(\psi(\sigma_{0}*00\dots)=k_{0})$, and \eqref{druk4}$^{\dagger}$ follows.  
In particular, such $\sigma_{0}$ can be taken to have length $k_{1}$, where the latter is obtained from applying \eqref{belastingdruk2} for $\psi$.  
We now observe that $\tau=\sigma_{0}*00\dots00$ with $|\tau|=M$ is one of sequences $f^{0}$ 
considered in the bounded search needed to compute $\Psi(\psi,M)$.  The assumption \eqref{druk4} implies that $\Psi(\psi,M)=\Psi(\psi,M')$ for any $M,M'\in \Omega$.  
Applying $\Omega$-CA now immediately yields \eqref{SUP}$^{\st}$ and its `dagger' version.      

\medskip

Next, the remaining applications are immediate:  To prove that \eqref{druk4}$^{\dagger}$ implies \eqref{SUP}$^{\dagger}$ relative to `st', follows from the previous part of the proof involving $\Psi$, for which obtaining \eqref{belastingdruk2} is superfluous.  
To obtain the reverse implication, note that the functional from \eqref{SUP}$^{\dagger}$ relative to `st', is uniquely defined and use PF-TP$_{\forall}$ as for \eqref{MUC}$^{\st}$ in the proof of Theorem \ref{muck} and Remark \ref{tokkiep}.   
\end{proof}
The first part of the proof reveals a subtle discrepancy between universes of standard and all objects in $\RCAO$: The former does not have extensionality but does have $\QFAC^{2,0}$, and the reverse for the latter.
Surprisingly, the latter choice axiom solves both problems.    
\begin{cor}\label{tochwelbela2}
In $\RCAO$, \eqref{MUC} or $\eqref{SUP}^{\dagger}$ implies
\begin{align}
(\forall^{\st}F:[0,1]\di \R)(\exists^{\st}y^{1}&)\big[(\forall x\in [0,1])(F(x)\lessapprox y)\notag\\
&\label{CONT2}\textstyle\wedge (\forall^{\st}k^{0})(\exists^{\st}z^{1}\in [0,1])(F(z)>_{\R}y-\frac{1}{k}) \big],  \tag{$\mathfrak{F}$}
\end{align}
while \eqref{SUP} implies the first conjunct of \eqref{CONT2}, i.e.\ that $F$ is finitely bounded. 
\end{cor}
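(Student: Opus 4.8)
The plan is to push the supremum principles through binary expansions, exactly as continuity was pushed through in the proof of Corollary~\ref{tochwelbela}. Fix a standard $F:[0,1]\di\R$ and, for $\alpha^{1}\leq_{1}1$, let $x_{\alpha}\in[0,1]$ be the real with binary expansion $\alpha$, so that every point of $[0,1]$ equals some $x_{\alpha}$. First I would dispose of the \eqref{MUC}-case by reducing it to the \eqref{SUP}$^{\dagger}$-case: the opening lines of the proof of Theorem~\ref{muck2} show $\eqref{MUC}\di\eqref{SUP}^{\dagger}$ over $\RCAO$ (via $\Gamma(\varphi):=\max_{|f|=\Omega(\varphi)}\varphi(f*00\dots)$), while \eqref{MUC} moreover yields nonstandard continuity of $F$ by Corollary~\ref{tochwelbela} together with a standard modulus of uniform continuity for $F$ (apply the fan functional $\Omega$ to the functionals $\varphi(\cdot,k)$ of that proof). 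Hence it suffices to argue under \eqref{SUP}$^{\dagger}$, keeping a bookmark on the fact that, in the bare \eqref{SUP}$^{\dagger}$-case, a standard modulus of uniform continuity for $F$ must be secured separately.

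Next I would treat finite boundedness, which is all that \eqref{SUP} alone yields. Using \eqref{SUP} on $\alpha\mapsto\max(-\lfloor[F(x_{\alpha})](2)\rfloor,0)$, fix a standard $c$ with $F\geq-c$ on $[0,1]$, and for standard $k$ set $\varphi_{k}(\alpha):=\lfloor 2^{k}([F(x_{\alpha})](k+2)+c)\rfloor$, a well-defined standard functional of type $1\di0$. Applying \eqref{SUP} to $\varphi_{k}$ gives, with $G$ the (unique, hence standard) supremum functional, a standard $m_{k}:=G(\varphi_{k})$ with $(\forall\alpha\leq_{1}1)(\varphi_{k}(\alpha)\leq m_{k})$ and, under \eqref{SUP}$^{\dagger}$, a witness $\sigma_{k}^{0}\leq_{0}1$ with $\varphi_{k}(\sigma_{k}*00\dots)=m_{k}$. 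Put $a_{k}:=\frac{m_{k}}{2^{k}}-c$. The bound on $\varphi_{k}$ gives $F(x)<a_{k}+2^{-k+1}$ for every $x\in[0,1]$; taking $k=0$ this already yields the first conjunct of \eqref{CONT2}, which is the claim made from \eqref{SUP} alone. The attained value gives $F(x_{\sigma_{k}*00\dots})>a_{k}-2^{-k+1}$, and comparing the two estimates for consecutive $k$ yields $|a_{k+1}-a_{k}|<2^{-k+2}$; hence $(a_{k})_{k}$ is a standard sequence that is Cauchy with a standard modulus, and therefore has a standard limit $y^{1}$ (so no appeal to $\Omega$-CA is needed). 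Since $F(x)<a_{k}+2^{-k+1}$ for all standard $k$, we get $F(x)\lessapprox y$ for every $x\in[0,1]$, which is the first conjunct of \eqref{CONT2} with the $y$ we shall also use below.

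It remains to verify the second conjunct: given a standard $m$, produce a standard $z\in[0,1]$ with $F(z)>y-\frac{1}{m}$. Choosing a standard $k$ large enough (so that $|a_{k}-y|$ and $2^{-k}$ are small relative to $\frac{1}{m}$) gives $F(x_{\sigma_{k}*00\dots})>y-\frac{1}{2m}$. The point $x_{\sigma_{k}*00\dots}$ need not be standard, so I would instead take the standard dyadic $z:=x_{(\overline{\sigma_{k}}j)*00\dots}$, using that $\overline{\sigma_{k}}j$, being a binary string of standard length $j$, is standard, while $|z-x_{\sigma_{k}*00\dots}|\leq 2^{-j+1}$. Here is the one genuinely delicate point, and the step I expect to carry the real weight: one needs a standard modulus of uniform continuity $\omega$ for $F$, so that choosing a standard $j>\omega(\ell)+1$ for a suitable standard $\ell$ forces $|F(z)-F(x_{\sigma_{k}*00\dots})|<\frac{1}{2m}$ and hence $F(z)>y-\frac{1}{m}$. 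In the \eqref{MUC}-case this $\omega$ is the one recorded in the first paragraph; in the bare \eqref{SUP}$^{\dagger}$-case (over $\RCAO$, without $\QFAC^{2,0}$) it must be extracted by the continuity/modulus machinery in the proof of Theorem~\ref{muck2} (the functional $\Psi$ there, together with \eqref{belastingdruk2}). Once $\omega$ is in hand, $z$ is a standard element of $[0,1]$ with $F(z)>y-\frac{1}{m}$, and since $m$ was an arbitrary standard number the second conjunct of \eqref{CONT2} follows; all remaining estimates are routine, exactly as in Corollary~\ref{tochwelbela}.
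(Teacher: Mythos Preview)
Your treatment of the \eqref{MUC}-case and of the first conjunct from \eqref{SUP} is fine and close in spirit to the paper.  The real divergence---and the gap---is in your handling of the full conclusion \eqref{CONT2} from $\eqref{SUP}^{\dagger}$ over bare $\RCAO$.

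Your route constructs the supremum $y$ as the limit of a standard Cauchy sequence $(a_{k})$, and then, in order to exhibit a \emph{standard} near-maximizer $z$, you need a standard modulus of uniform continuity for $F$.  You flag this as the delicate point and propose to extract the modulus from the machinery around $\Psi$ and \eqref{belastingdruk2} in the proof of Theorem~\ref{muck2}.  But \eqref{belastingdruk2} is obtained there only after invoking standard extensionality via Theorem~\ref{halleh}, and that step uses $\QFAC^{2,0}$; indeed the whole chain $\eqref{SUP}^{\dagger}\Rightarrow\eqref{MUC}$ in Theorem~\ref{muck2} is stated over $\RCAO+\QFAC^{2,0}$, not $\RCAO$.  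Since Corollary~\ref{tochwelbela2} is over $\RCAO$ alone, your proposed extraction of the modulus is not available, and the argument as written does not close.

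The paper avoids this detour entirely by exploiting exactly what the dagger buys: the maximum of $\varphi(\cdot,k_{0})$ is attained at some $\sigma*00\dots$ with $\sigma^{0}\leq_{0}1$ a finite string, and for standard $\varphi(\cdot,k_{0})$ this $\sigma$ may be taken \emph{standard}.  Then $\sigma*00\dots$ and hence $x_{\sigma*00\dots}$ are standard, so no continuity is needed to pass to a standard witness.  The paper then applies $\QFACP$ (which \emph{is} available in $\RCAO$ via $\textup{HAC}_{\textup{int}}$) to get a standard $Y^{1}$ with $Y(k_{0})$ outputting such a $\sigma$, and defines the supremum of $F$ directly from $Y$ and $G$; \eqref{CONT2} then drops out of $\eqref{druk4}^{\dagger}$.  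In your notation, your own $\sigma_{k}$ plays precisely this role: once you observe that $\sigma_{k}$ is standard, your claim ``$x_{\sigma_{k}*00\dots}$ need not be standard'' is false, and the continuity modulus you worry about is simply not needed.
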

\begin{proof}
The first implication is immediate from the theorem, Corollary~\ref{tochwelbela}, and the fact that a uniformly continuous function $F:[0,1]\di \R$ with a modulus has a supremum (See \cite{kohlenbach2}*{p.\ 293}).   
For the second implication, first of all consider the functional $\varphi(\alpha,k_{0})$ defined in terms of $F$ from the proof of Corollary~\ref{tochwelbela}.  Clearly, \eqref{SUP}$^{\st}$ implies that standard $F:[0,1]\di \R$ must be finitely bounded by considering the associated $\varphi(\alpha,k_{0})$ for $k_{0}=0$.  Secondly, to obtain \eqref{CONT2}, the second clause of \eqref{SUP}$^{\dagger}$ implies that $\varphi(\alpha,k_{0})$ attains its maximum for some $\alpha=\sigma*00\dots$ with $\sigma\leq_{0^{*}}1$ standard, i.e.\ $(\forall^{\st}k_{0})(\exists^{\st}\sigma^{0}\leq_{0}1)(\varphi(\sigma*00\dots, k_{0})=G(\varphi(\cdot, k_{0})))$, and $\QFACP$ yields $Y^{1}$ which outputs such $\sigma$.  
Finally, it is straightforward to define the supremum of $F$ using $Y$, and \eqref{CONT2} now follows from \eqref{druk4}$^{\dagger}$.  
\end{proof}
While \eqref{SUP}$^{\dagger}$ implies \eqref{CONT2} without the use of standard extensionality, it should be noted that the type-lowering modification which distinguishes \eqref{SUP}$^{\st}$ from \eqref{SUP}$^{\dagger}$, is an implicit continuity assumption.  

\medskip

Finally, in Theorem \ref{dofff} below, we prove the equivalence between \eqref{SUP} and:
\be\label{todo}
(\forall^{\st}\varphi^{2})(\forall f^{1}\leq_{1}1)(\exists^{\st}n^{0})(\varphi(f)\leq n),
\ee  
which expresses that a standard functional $\varphi^{2}$ has finite values \emph{everywhere} in Cantor space.  
Other principles have a similar equivalent formulation (See Section \ref{dilkooo}).

\subsection{Several fan theorems}
The fan functional being named after the fan theorem, it is a natural question whether there is a version of the former equivalent to the latter.  
To answer this question in the positive, we first study the \emph{quantifier-free fan theorem} QF-FAN (See e.g.\ \cite{kohlenbach3}*{p.\ 224}) and the \emph{continuous fan theorem} FAN$_{c}$ (See \cite{troelstra1}*{p.\ 80, 1.9.24}).
To avoid confusion, `fan theorem' \emph{without additional qualification} will always refer to FAN, the classical contraposition of WKL.
\subsubsection{Quantifier-free fan theorem}\label{qfffan}
The principle QF-FAN (See e.g.\ \cite{kohlenbach3}*{p.\ 224} ) is a slight generalisation of the fan theorem to quantifier-free formulas $A_{0}(f,n)$:  
\[
(\forall f^{1}\leq_{1}1)(\exists n^{0})A_{0}(f,n)\di (\exists k^{0})(\forall f^{1}\leq_{1}1)(\exists n\leq k)A_{0}(f,n).
\]
The uniform version of QF-FAN is as follows:   
\begin{align}
(\exists \Phi^{3}\in \mathfrak{L})(\forall g^{2},& H^{2})\big[(\forall \alpha^{1}\leq_{1}1)[H(\alpha,g(\alpha))=0] \notag\\
& \di (\forall \alpha^{1}\leq_{1}1)(\exists n^{0}\leq \Phi(g,H))[H(\alpha,n)=0]    \big].\label{qffan} \tag{UQF}
\end{align}
The symbolic notation `$\Phi\in \mathfrak{L}$' is short for the fact that $\Phi(g)$ is the \emph{minimal} number with the property in \eqref{qffan}.    
The nonstandard version of QF-FAN is as follows:
\begin{align}\label{nsfan}
(\forall^{\st}H^{2})\big[(\forall^{\st}\alpha^{1}\leq_{1}1)&(\exists^{\st}n^{0})(H(\alpha,n)=0) \notag\\
&\di (\exists^{\st}k^{0})(\forall \alpha^{1}\leq_{1}1)(\exists n\leq k)(H(\alpha,n)=0)   \tag{$\mathfrak{Q}$}  \big].
\end{align}
\begin{thm}\label{muck3}
In $\RCAO+\QFAC^{2,0}$, $ \eqref{MUC} \asa \eqref{qffan}^{\st}\asa \eqref{qffan}\asa \eqref{nsfan}$.  
\end{thm}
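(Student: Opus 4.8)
The plan is to establish the four-way equivalence $\eqref{MUC} \asa \eqref{qffan}^{\st} \asa \eqref{qffan} \asa \eqref{nsfan}$ by reusing the template of Theorem~\ref{muck} together with the extensionality workaround of Theorem~\ref{halleh}. The governing idea is that QF-FAN is, up to coding $H^2$ as a type-2 functional applied to pairs $\langle \alpha, n\rangle$, essentially the fan theorem, so its uniform and nonstandard versions should sit at the same level as \eqref{MUC}.

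\medskip

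\noindent\textbf{Step 1: $\eqref{MUC}\di\eqref{qffan}$.} Assuming the fan functional $\Omega^3$, I would, given $g^2$ and $H^2$ with $(\forall \alpha^1\leq_1 1)[H(\alpha,g(\alpha))=0]$, define an auxiliary functional $\psi_{g,H}(\alpha):= g(\alpha)$ (or a suitable majorant thereof) and set $\Phi(g,H):= $ the least $k$ such that $(\forall \alpha \leq_1 1)(\exists n\leq k)[H(\alpha,n)=0]$, bounding the search using $\Omega(\psi_{g,H})$: since $g$ is continuous on Cantor space with modulus $\Omega(g)$, the value $g(\alpha)$ depends only on $\overline\alpha\Omega(g)$, so $\max\{g(\beta) : |\beta|=\Omega(g), \beta\leq_{0^*}1\}+1$ bounds all the witnesses. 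Minimising gives $\Phi\in\mathfrak L$. This is the routine direction. For $\eqref{qffan}\di\eqref{qffan}^{\st}$ one notes that the matrix of \eqref{qffan} is internal and parameter-free except for $\Phi$, and $\Phi$ is uniquely determined by the minimality clause `$\Phi\in\mathfrak L$'; hence PF-TP$_\forall$ (contraposition) makes $\Phi$ standard, exactly as for \eqref{MUC}$^{\st}$ in Theorem~\ref{muck} and Remark~\ref{tokkiep}.

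\medskip

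\noindent\textbf{Step 2: $\eqref{qffan}^{\st}\di\eqref{nsfan}$.} Given standard $H^2$ with $(\forall^{\st}\alpha\leq_1 1)(\exists^{\st}n)[H(\alpha,n)=0]$, apply QF-AC$^{1,0}$ relative to `st' (available from HAC$_{\textup{int}}$) to obtain a standard $g^2$ with $(\forall^{\st}\alpha\leq_1 1)[H(\alpha,g(\alpha))=0]$. Feeding this standard $(g,H)$ into \eqref{qffan}$^{\st}$ yields a standard $k=\Phi(g,H)$ with $(\forall \alpha\leq_1 1)(\exists n\leq k)[H(\alpha,n)=0]$, which is the conclusion of \eqref{nsfan}. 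One subtlety: \eqref{qffan}$^{\st}$ applied to standard $(g,H)$ has conclusion $(\forall \alpha\leq_1 1)(\exists n\leq\Phi(g,H))[H(\alpha,n)=0]$ with the \emph{internal} universal quantifier over all $\alpha$, so this already gives the internal statement needed for the succedent of \eqref{nsfan}; this is why the uniform version rather than merely a starred continuity statement is used here.

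\medskip

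\noindent\textbf{Step 3: $\eqref{nsfan}\di\eqref{MUC}$, the main obstacle.} Here I would mimic the $\eqref{druk}\di\eqref{MUC}^{\st}$ argument. Given standard $\varphi^2$, build the internal predicate $H_\varphi(\alpha, n) := 1 - \textup{sg}\big[(\forall f^0,g^0\in\{0,1\}^{n})(\overline f n =_0 \overline g n \di \varphi(f*00\dots)=_0\varphi(g*00\dots))\big]$ coded so that $H_\varphi(\alpha,n)=0$ iff $n$ is a `uniformity modulus up to length $n$' witness. The hard part is obtaining the antecedent of \eqref{nsfan} for $H_\varphi$, i.e.\ $(\forall^{\st}\alpha\leq_1 1)(\exists^{\st}n)[H_\varphi(\alpha,n)=0]$; this requires a standard pointwise modulus of continuity for standard $\varphi$, which in turn needs standard extensionality \eqref{sExt} on Cantor space. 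Since \eqref{sExt} is \emph{not} automatically available (Remark~\ref{sextrem}), this is precisely where the hypothesis $\QFAC^{2,0}$ enters: by Theorem~\ref{halleh}, in $\RCAO+\QFAC^{2,0}$ every standard $\varphi^{1\di1}$ is standard extensional, which delivers the needed pointwise continuity of standard $\varphi^2$ on Cantor space. From there, \eqref{nsfan} applied to $H_\varphi$ gives a standard $k_0$ that is a uniform modulus, i.e.\ $(\forall f,g\leq_1 1)[\overline f k_0=\overline g k_0\di \varphi(f)=\varphi(g)]$ holds for standard $\varphi$, which is \eqref{MUC2}; then $\eqref{MUC2}\di\eqref{MUC}$ by Theorem~\ref{muck}. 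I expect the bookkeeping of coding $H_\varphi$ as a genuine type-2 functional and verifying that the `st'-relativised choice steps stay inside $\RCAO+\QFAC^{2,0}$ to be the only genuinely delicate points; everything else is parallel to Theorems~\ref{muck} and~\ref{muck2}.
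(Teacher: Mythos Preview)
Your Step 2, the implication $\eqref{qffan}^{\st}\di\eqref{nsfan}$, contains a genuine gap. You claim that applying $\eqref{qffan}^{\st}$ to standard $(g,H)$ yields a conclusion $(\forall \alpha\leq_1 1)(\exists n\leq\Phi(g,H))[H(\alpha,n)=0]$ with an \emph{internal} universal quantifier. This is false: by the convention in Remark~\ref{notawin}, the superscript `st' relativises \emph{all} unbounded quantifiers, so the conclusion of $\eqref{qffan}^{\st}$ is only $(\forall^{\st}\alpha\leq_1 1)(\exists n\leq\Phi(g,H))[H(\alpha,n)=0]$. Passing from this to the required $(\forall\alpha)$ in the consequent of \eqref{nsfan} would need transfer with the standard parameter $H$, i.e.\ $\paai$, which is not available. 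The paper avoids this by not attempting $\eqref{qffan}^{\st}\di\eqref{nsfan}$ directly; instead it proves $\eqref{qffan}^{\st}\di\eqref{MUC}$ via the supremum characterisation (take $H(\alpha,n)=0\equiv[\varphi(\alpha)\leq n]$ and $g=\varphi$, then $\Phi(\varphi,H)$ is the supremum, and Theorem~\ref{muck2} applies) and obtains $\eqref{MUC}\di\eqref{nsfan}$ separately using the nonstandard continuity of $H(\cdot,g(\cdot))$.

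Your Step 3 has the right overall shape (use Theorem~\ref{halleh} to get standard extensionality, build an $H$, apply \eqref{nsfan}), but the specific $H_\varphi$ you wrote down is broken: as stated it does not depend on $\alpha$, and the condition $\overline{f}n=\overline{g}n$ for $f,g$ of length $n$ is vacuous. The paper's choice is cleaner and actually decidable: set $H(\alpha\oplus\beta,N)=0\equiv[\overline{\alpha}N=\overline{\beta}N\di\varphi(\alpha)=\varphi(\beta)]$. Standard extensionality gives the antecedent of \eqref{nsfan}, and its conclusion yields a standard $k_0$ with $(\forall\alpha,\beta\leq_1 1)(\exists N\leq k_0)[\overline{\alpha}N=\overline{\beta}N\di\varphi(\alpha)=\varphi(\beta)]$, from which \eqref{druk} and hence \eqref{MUC} follow by Theorem~\ref{muck}. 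Your Step 1 is essentially correct and matches the paper.
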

\begin{proof}
Assume $\eqref{MUC}$ and define $\Phi(g):=\max_{|\alpha^{0}|=\Omega(g) \wedge \alpha^{0}\leq_{0^{*}}1} g(\alpha*00\dots)$ to obtain \eqref{qffan}$^{\st}$ and \eqref{qffan}. 
Note in particular that $\Phi(g)$ is minimal as required.  To obtain \eqref{nsfan}, let $H$ be as in the latter's antecedent and apply $\QFACP$ to obtain $(\forall^{\st}\alpha^{1}\leq_{1}1)(H(\alpha,g(\alpha))=0)$, for some standard $g^{2}$.  
Applying \eqref{MUC} to $H(\cdot, g(\cdot))$ and $g(\cdot)$, the consequent of \eqref{nsfan} now follows.  

\medskip  
  
Assume \eqref{nsfan} and consider standard extensionality for standard $\varphi^{2}$ (Theorem~\ref{halleh}):
\be\label{sexty}
(\forall^{\st}\alpha^{1},\beta^{1}\leq 1)(\exists^{\st}N^{0})\big[\overline{\alpha}N=\overline{\beta}N\di \varphi(\alpha)=\varphi(\beta)\big], 
\ee
where the formula in square brackets may be replaced by a formula $H(\alpha, N)=0$, for standard $H^{2}$.  
By assumption, we obtain
\[
(\exists^{\st}k_{0})(\forall\alpha^{1},\beta^{1}\leq 1)(\exists N^{0}\leq k_{0})(\overline{\alpha}N=\overline{\beta}N\di \varphi(\alpha)=\varphi(\beta)),
\]
from which \eqref{druk} is immediate and we obtain \eqref{MUC} by Theorem \ref{muck}.  

\medskip

Finally, assume \eqref{qffan}$^{\st}$ and let $\varphi^{2}$ be standard.  
Define $[H(\alpha,n)=0]\equiv [\varphi(\alpha)\leq n]$ and note that $(\forall^{\st}\alpha^{1}\leq_{1}1)(H(\alpha,g(\alpha))=0)$ for $g=\varphi$.  
Applying \eqref{qffan}$^{\st}$, we observe that $\Phi(\varphi,H)$ is the supremum of $\varphi$ and \eqref{MUC} follows by Theorem \ref{muck2}.     
Similarly, \eqref{qffan} implies \eqref{SUP} and hence \eqref{MUC}.  
\end{proof} 
The original principle QF-FAN also satisfies an equivalence. 
\begin{cor}\label{tothecenter}
In $\RCAO+\QFAC^{2,0}$, $\textup{QF-FAN}^{\st}$ is equivalent to \eqref{dikjui}$^{\st}$, where: 
\be\label{dikjui}  \tag{UCS}
(\forall \varphi^{2})(\exists N)(\forall \alpha^{1},\beta^{1}\leq 1)(\overline{\alpha}N=\overline{\beta}N\di \varphi(\alpha)=\varphi(\beta)).  
\ee
The equivalence involving the internal principles holds over $\RCAo$.  
\end{cor}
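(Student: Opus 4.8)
The plan is to first establish the internal equivalence $\textup{QF-FAN}\asa\eqref{dikjui}$ over $\RCAo$ by a direct two-directional argument, and then to observe that both directions relativise to the standardness predicate, the only substitutions needed being the standard extensionality of Theorem~\ref{halleh} in place of the axiom \eqref{EXT}, and $\QFACP$ in place of $\QFAC^{1,0}$; this relativisation is the sole point where the hypothesis $\QFAC^{2,0}$ enters.

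\emph{Internal equivalence.} For $\textup{QF-FAN}\di\eqref{dikjui}$, fix $\varphi^2$, pair sequences via $\gamma=\alpha\oplus\beta$ with the primitive recursive projections $(\gamma)_0,(\gamma)_1$ (so that $\gamma\leq_11\asa\alpha,\beta\leq_11$), and apply $\textup{QF-FAN}$ to the quantifier-free formula
\[
A_0(\gamma,n)\ \equiv\ [\overline{(\gamma)_0}n\ne_0\overline{(\gamma)_1}n]\ \vee\ [\varphi((\gamma)_0)=_0\varphi((\gamma)_1)].
\]
Its hypothesis $(\forall\gamma\leq_11)(\exists n^0)A_0(\gamma,n)$ holds by \eqref{EXT} applied to $\varphi$: if $\varphi((\gamma)_0)=\varphi((\gamma)_1)$ the second disjunct holds at $n=0$; otherwise \eqref{EXT} forces $(\gamma)_0\ne_1(\gamma)_1$, so the sequences differ at some $m$ and the first disjunct holds at $n=m+1$. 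The bound $k$ returned then witnesses $N$ in \eqref{dikjui}: given $\alpha,\beta\leq_11$ with $\overline{\alpha}k=\overline{\beta}k$, choose $n\leq k$ with $A_0(\alpha\oplus\beta,n)$; since $n\leq k$ forces $\overline{\alpha}n=\overline{\beta}n$, the first disjunct is killed, so $\varphi(\alpha)=\varphi(\beta)$. Conversely, for $\eqref{dikjui}\di\textup{QF-FAN}$, let $A_0(f,n)$ be quantifier-free with $(\forall f\leq_11)(\exists n)A_0(f,n)$; since then $(\forall f^1)(\exists n)A_0(\lambda i.\min(f(i),1),n)$, the axiom $\QFAC^{1,0}$ (part of $\RCAo$) yields $Y^{1\di 0}$ with $A_0(f,Y(f))$ for all $f\leq_11$. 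Applying \eqref{dikjui} to $Y$ gives $N$ with $Y(f)=Y(\overline{f}N*00\dots)$ for all $f\leq_11$; the right-hand side takes only finitely many values, so $k:=\max_{|\sigma|=N,\ \sigma\leq_{0^*}1}Y(\sigma*00\dots)$ exists, bounds every $Y(f)$ with $f\leq_11$, and hence $(\forall f\leq_11)(\exists n\leq k)A_0(f,n)$.

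To obtain $\textup{QF-FAN}^{\st}\asa\eqref{dikjui}^{\st}$ over $\RCAO+\QFAC^{2,0}$, one reruns the two arguments with every quantifier relativised to `st' and all parameters standard. In the forward direction the hypothesis of $\textup{QF-FAN}^{\st}$ for $A_0$ is checked as above, now from the \emph{standard} extensionality of $\varphi$ supplied by Theorem~\ref{halleh} instead of \eqref{EXT}; since $\gamma\mapsto(\gamma)_i$ is primitive recursive, $(\gamma)_0,(\gamma)_1$ are standard whenever $\gamma$ is, and the witness $k$ comes out standard. In the converse direction one uses $\QFACP$ — available in $\RCAO$ through $\textup{HAC}_{\textup{int}}$ — in place of $\QFAC^{1,0}$ to produce a \emph{standard} $Y$, applies $\eqref{dikjui}^{\st}$ to this standard $Y$ to obtain a standard $N$, and then $k=\max_{|\sigma|=N,\ \sigma\leq_{0^*}1}Y(\sigma*00\dots)$ is again standard. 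The main thing to watch — and the step where a slip is easiest — is that every auxiliary object built en route (the pairing $\gamma$, the truncation $\lambda i.\min(f(i),1)$, the choice functional $Y$, the finite maximum $k$) remains standard on standard data, so that the `st'-relativised conclusions are licit; this needs only that primitive recursion and $\QFACP$ preserve standardness. The genuine content, i.e.\ the passage from a pointwise to a uniform bound, sits in the converse direction, where the finiteness of the range of the $N$-bit-determined functional $Y$ is precisely the fan-theoretic ingredient.
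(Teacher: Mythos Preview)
Your proof is correct and follows essentially the same approach as the paper's: for $\textup{QF-FAN}\di\eqref{dikjui}$ you apply QF-FAN to the extensionality instance for $\varphi$ (with the pairing $\alpha\oplus\beta$ made explicit), and for $\eqref{dikjui}\di\textup{QF-FAN}$ you Skolemise the antecedent via $\QFAC^{1,0}$, apply \eqref{dikjui} to the resulting $Y$, and take a finite maximum; the relativisation then substitutes Theorem~\ref{halleh} for \eqref{EXT} and $\QFACP$ for $\QFAC^{1,0}$, exactly as the paper does. Your write-up is simply more detailed about the pairing and the finite-max step than the paper's terse sketch.
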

\begin{proof}
For the equivalence between \eqref{dikjui}$^{\st}$ and QF-FAN$^{\st}$, apply $\QFACP$ (which follows from HAC$_{\textup{int}}$) to the antecedent of {QF-FAN}$^{\st}$ to obtain $Y^{2}$ witnessing this formula.  
Now apply \eqref{dikjui}$^{\st}$ to $Y$, yielding the consequent of QF-FAN$^{\st}$.  
For the remaining implication, consider standard extensionality as in \eqref{sexty} which follows from Theorem~\ref{halleh}, and apply QF-FAN$^{\st}$ to obtain \eqref{dikjui}$^{\st}$.  
The remaining `internal' equivalence is proved in exactly the same way.  
\end{proof}
In \cite{firstHORM}*{\S5}, it is proved that the fan theorem is equivalent to its uniform version UFAN$_{2}$ (See below) assuming $\QFAC^{2,0}$.  
In Remark \ref{bridgestone}, we sketch how such an equivalence \emph{without additional assumptions} does not work for QF-FAN.  
\begin{princ}[$\UFAN_{2}$]
There is a functional $\Phi^{3}$ such that for $T^{1}\leq_{1}1$ and $g^{2}$,
\[
(\forall \alpha^{1}\leq_{1} 1)(\overline{\alpha}g(\alpha)\not\in T)\di (\forall \alpha^{1}\leq_{1} 1)(\exists n^{0}\leq_{0} \Phi(g,T))(\overline{\alpha}n\notin T). 
\]
\end{princ}
The subscript in UFAN$_{2}$ is in place because UFAN$_{1}$, which is the former with $g$ omitted, is a different principle, namely equivalent to $(\exists^{2})$. 
Furthermore, \eqref{MUC} implies UFAN$_{2}$, and more equivalences may be found in \cite{firstHORM}*{\S5}. 
We finish this section with a sketch why QF-FAN is not equivalent to the uniform version \eqref{qffan} without invoking additional uniform principles.    
\begin{rem}\label{bridgestone}\rm  
To prove the equivalence between UFAN$_{2}^{\st}$ and FAN$^{\st}$, one notes that the latter is equivalent to FAN$^{\st}$ with the consequent weakened to 
$(\exists^{\st}k^{0})(\forall \sigma^{0}\leq_{0}1)(|\sigma|=k\di (\exists n\leq k)\sigma\not\in T)$, as trees are closed downwards.
Next, one introduces a functional $g^{2}$ witnessing the antecedent of this weak version, and one brings all quantifiers to the front.  To the resulting formula, HAC$_{\textup{int}}$ can be applied to obtain the functional from UFAN$_{2}^{\st}$ (See \cite{firstHORM}*{\S5} for details).  

\medskip

The problem with QF-FAN is that a similar weakening is not `directly' possible:  The formula $A_{0}(f, n)$ from QF-FAN need not be monotone in $n$, in contrast to the formula `$\overline{\alpha}n\not\in T$' from the fan theorem.  
Of course, assuming that the formula $A_{0}(f,n)$ is $H(f, n)=0$ for some standard $H^{2}$, we can invoke \eqref{dikjui} to prove that $H(\cdot, g(\cdot))$ has an associate $\alpha^{1}$ (See \cite{kohlenbach4}*{\S4}).  
Then, $(\forall^{\st} f^{1}\leq_{1}1)(\exists^{\st}n)H(f, n)=0$ implies $(\forall^{\st}f^{1})(\exists^{\st}n^{0})\alpha(\overline{f}n)=1$, and the latter has the right form to apply the weakening mentioned in the previous paragraph.  Thus, to obtain \eqref{qffan}$^{\st}$ in this way, we seem to require a functional which converts a (pointwise continuous) type~2-functional into an associate.  
By \cite{kohlenbach4}*{Prop.\ 4.4}, this amounts to a functional providing a modulus of pointwise continuity.  
\end{rem}
The observation made in he previous remark is one of the conceptual motivations for our study of a version of the fan functional for \emph{pointwise continuity} in Section~\ref{layola}.
\subsubsection{The continuous fan theorem}
The principle FAN$_{c}$ is a generalization of the fan theorem (\cite{kohlenbach3}*{p.\ 225} and \cite{troelstra3}*{p.\ 80, 1.9.24}) with continuity `built-in' as follows:
\[
(\forall \alpha^{1}\leq_{1}1)(\exists x^{0})A(\alpha,x)\di (\exists y^{0})(\forall \alpha^{1}\leq_{1}1)(\exists x^{0})(\forall \beta\leq_{1}1)(\overline{\alpha}y=\overline{\beta}y\di A(\beta,x).
\]
The uniform version of FAN$_{c}$ is as follows:   
\begin{align}
\big(\exists \Phi&^{2\di(0\times 2 )}\in \mathfrak{L}\big)(\forall g^{2}, H^{2})\big[(\forall \gamma^{1}\leq_{1}1)[H(\gamma,g(\gamma))=0] \di (\forall \alpha^{1},\beta^{1}\leq_{1}1) \notag\\
& [ \overline{\alpha}\Phi(g,H)(1)=\overline{\beta}\Phi(g, H)(1))\di     H(\beta,\Phi(g,H)(2)(\alpha))=0]    \big].\label{qffan2} \tag{UFC}
\end{align}
The symbolic notation `$\Phi\in \mathfrak{L}$' is short for the fact that $\Phi(g)$ provides the \emph{minimal} numbers with the property in \eqref{qffan2}.    
The `obvious' nonstandard version of FAN$_{c}$:  
\begin{align}\label{nsfan2}
(\forall^{\st}&H^{2})\big[(\forall^{\st}\alpha^{1}\leq_{1}1)(\exists^{\st}n^{0})(H(\alpha,n)=0) \notag\\
&\di (\exists^{\st}k^{0})(\forall^{\st} \alpha^{1}\leq_{1}1)(\exists^{\st} l^{0})(\forall \beta^{1}\leq_{1}1)(\overline{\alpha}k=\overline{\beta}k\di H(\beta,l)=0)   \tag{$\mathfrak{U}$}  \big].
\end{align}
Note that \eqref{nsfan2} is `self-transferring', as we can drop the `st' in $(\forall^{\st}\alpha^{1}\leq 1)$ in the antecedent.  
Finally, consider the following nonstandard version of FAN$_{c}$ which has \emph{nonstandard} continuity built-in (rather than the $\eps$-$\delta$-variety).  
\begin{align}\label{nsfan3}
(\forall^{\st}&H^{2})\big[(\forall^{\st}\alpha^{1}\leq_{1}1)(\exists^{\st}n^{0})(H(\alpha,n)=0) \notag\\
&\di (\forall \alpha^{1}\leq_{1}1)(\exists^{\st} l^{0})(\forall \beta^{1}\leq_{1}1)(\alpha \approx_{1} \beta\di H(\beta,l)=0)   \tag{$\mathfrak{W}$}  \big].
\end{align}
The three previous versions of $\FAN_{c}$ are easily seen to imply standard extensionality as in \eqref{sExt} if we consider the formula stating the totality of type 2-functionals.  
\begin{thm}\label{muck4}
In $\RCAO$, we have $ \eqref{MUC}\asa \eqref{qffan2}\asa\eqref{qffan2}^{\st}\asa \eqref{nsfan2}\asa \eqref{nsfan3}$.
\end{thm}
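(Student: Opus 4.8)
The plan is to establish the cycle of implications by reducing everything back to the already-proved equivalences for \eqref{MUC} in Theorems~\ref{muck} and~\ref{muck3}. First I would prove $\eqref{MUC}\di\eqref{qffan2}$ and $\eqref{MUC}\di\eqref{qffan2}^{\st}$ directly: assuming the fan functional $\Omega$ exists, set $\Phi(g,H)(1):=\Omega(g)$ (or, more carefully, the least $k$ such that $\overline{\alpha}k$ determines $H(\alpha,g(\alpha))$ uniformly, which exists since \eqref{MUC} makes every type-2 functional uniformly continuous on Cantor space), and let $\Phi(g,H)(2)$ be the obvious functional returning, for given $\alpha$, the witness $n=g(\beta)$ for any $\beta$ with $\overline{\alpha}\Phi(g,H)(1)=\overline{\beta}\Phi(g,H)(1)$; one checks this is total and that $\Phi$ can be taken minimal as required by the `$\Phi\in\mathfrak{L}$' clause. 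Both the internal statement \eqref{qffan2} and (via \mbox{PF-TP$_{\forall}$}, exactly as in Remark~\ref{tokkiep} and the proof of Theorem~\ref{muck}) its starred version $\eqref{qffan2}^{\st}$ follow.

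Next, the routine down-steps: $\eqref{qffan2}^{\st}\di\eqref{nsfan2}$ and $\eqref{qffan2}^{\st}\di\eqref{nsfan3}$. Given standard $H^{2}$ with $(\forall^{\st}\alpha^{1}\leq_{1}1)(\exists^{\st}n)(H(\alpha,n)=0)$, apply $\QFACP$ (from \mbox{HAC$_{\textup{int}}$}) to obtain a standard $g^{2}$ with $(\forall^{\st}\alpha^{1}\leq_{1}1)(H(\alpha,g(\alpha))=0)$; since the antecedent of $\eqref{qffan2}^{\st}$ only needs the standard universal quantifier over $\alpha$ — as the excerpt notes, \eqref{nsfan2} is `self-transferring' — we may feed $(g,H)$ into $\eqref{qffan2}^{\st}$ and read off \eqref{nsfan2}. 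For \eqref{nsfan3} one additionally uses that $\alpha\approx_{1}\beta$ together with $\overline{\alpha}k=\overline{\beta}k$ for the \emph{standard} $k$ supplied by \eqref{nsfan2} is automatic (standardness of $k$), so nonstandard closeness is absorbed; conversely \eqref{nsfan3} gives back \eqref{nsfan2} because for standard $H$ and standard $\alpha$ the $\eps$-$\delta$ and nonstandard continuity notions coincide by the $\Omega$-invariance / overspill machinery already implicit in Theorem~\ref{muck}. So $\eqref{nsfan2}\asa\eqref{nsfan3}$, closing part of the loop.

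The substantive step — and the one I expect to be the main obstacle — is the return arrow, say $\eqref{nsfan2}\di\eqref{MUC}$ (equivalently starting from \eqref{nsfan3}). Here I would mimic the strategy of Theorem~\ref{muck3}: take a standard $\varphi^{2}$, and use standard extensionality on Cantor space, which by Remark~\ref{sextrem} is available since all three nonstandard versions imply \eqref{sExt} (applied to the totality-of-type-2-functionals formula), to rewrite the extensionality statement for $\varphi$ in the form $(\forall^{\st}\alpha^{1}\leq_{1}1)(\exists^{\st}n)(H(\alpha,n)=0)$ for a suitable standard $H^{2}$ built from $\varphi$ — intuitively $H$ codes `$n$ is a modulus at $\alpha$ for the pair $(\alpha,\cdot)$'. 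Feeding this $H$ into \eqref{nsfan2} yields a standard $k_{0}$ and standard local moduli $l$, and the built-in continuity conclusion $(\overline{\alpha}k_{0}=\overline{\beta}k_{0}\di H(\beta,l)=0)$ must then be massaged — using that $k_{0}$ is standard and that on Cantor space $\overline{\alpha}k_{0}=\overline{\beta}k_{0}$ holds for all $\beta$ in a standard-length neighbourhood — into the uniform statement \eqref{druk} or \eqref{cruxsks}, after which Theorem~\ref{muck} delivers \eqref{MUC}. The delicate point is that \eqref{nsfan2} only asserts a \emph{local} bound $l$ depending on $\alpha$, not a uniform one; converting local-to-uniform is precisely where one must exploit that the outer quantifier $(\exists^{\st}k^{0})$ in \eqref{nsfan2} \emph{is} uniform in $\alpha$ and that $k_{0}$-agreement forces $\varphi$-agreement, so a single bounded search over $\{0,1\}^{k_{0}}$ (as in $\Xi$ from \eqref{dagnoor}) produces the global modulus. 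I would present this as the one genuinely new argument in the proof, everything else being a transcription of Theorems~\ref{muck}--\ref{muck3}.
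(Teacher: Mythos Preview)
Your overall architecture is right, and the forward step $\eqref{MUC}\di\eqref{qffan2}$, $\eqref{qffan2}^{\st}$ is essentially what the paper does. However, two of your ``routine'' steps hide real problems.

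\medskip

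\textbf{The step $\eqref{qffan2}^{\st}\di\eqref{nsfan2}$ does not go through as you describe.} From $\eqref{qffan2}^{\st}$ applied to standard $g,H$ you only get the conclusion with \emph{standard} $\beta$, i.e.\ $(\forall^{\st}\alpha,\beta\leq_{1}1)[\overline{\alpha}k_{0}=\overline{\beta}k_{0}\di H(\beta,l_{\alpha})=0]$, whereas the consequent of \eqref{nsfan2} requires the inner $(\forall\beta)$ over \emph{all} binary sequences. You cannot ``read this off'': PF-TP$_{\forall}$ forbids parameters, and the remark that \eqref{nsfan2} is ``self-transferring'' refers to dropping `st' in the \emph{antecedent}, not the consequent. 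The paper avoids this by deriving \eqref{nsfan2} from \eqref{MUC} directly: after obtaining $\Phi$ as in \eqref{qffan2}, one applies the fan functional a \emph{second} time, to $h(\alpha\oplus\beta):=[H(\beta,\Phi(g,H)(2)(\alpha))=0]$, and takes $k_{0}:=\max(\Omega(h),\Phi(g,H)(1))$. This extra application of $\Omega$ is precisely what upgrades the standard $\beta$ to arbitrary $\beta$. Similarly, the paper never proves $\eqref{nsfan3}\di\eqref{nsfan2}$; it goes $\eqref{nsfan3}\di\eqref{druk}\di\eqref{MUC}$ instead.

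\medskip

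\textbf{In the key step $\eqref{nsfan2}\di\eqref{MUC}$, your choice of $H$ is not well-formed.} You say $H(\alpha,n)=0$ should code ``$n$ is a modulus at $\alpha$'', but that statement is $(\forall\beta\leq_{1}1)[\overline{\alpha}n=\overline{\beta}n\di\varphi(\alpha)=\varphi(\beta)]$, which contains a type-1 universal quantifier and so is not of the form $H(\alpha,n)=0$ for a type-2 functional $H$. The paper's trick is to feed \emph{pairs} into \eqref{nsfan2}: set $A_{0}(\alpha\oplus\beta,N)\equiv[\overline{\alpha}N=\overline{\beta}N\di g(\alpha)=g(\beta)]$, which \emph{is} quantifier-free. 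The antecedent of \eqref{nsfan2} then follows from standard extensionality. The point of the resulting uniform $k_{0}$ is not that ``$k_{0}$-agreement forces $\varphi$-agreement'' directly; rather, given any (possibly nonstandard) $\xi_{0}\approx_{1}\gamma_{0}$, the truncation $\alpha_{0}\oplus\beta_{0}:=\overline{\xi_{0}\oplus\gamma_{0}}k_{0}*00\dots$ is a \emph{standard} pair, so the consequent of \eqref{nsfan2} supplies a standard $N_{0}$ with $A_{0}(\xi_{0}\oplus\gamma_{0},N_{0})$, and since $N_{0}$ is standard and $\xi_{0}\approx_{1}\gamma_{0}$, this yields $g(\xi_{0})=g(\gamma_{0})$. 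That pair-coding plus truncation-to-standard manoeuvre is the genuinely new idea you are looking for; the ``bounded search over $\{0,1\}^{k_{0}}$'' picture does not capture it.
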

\begin{proof}
First of all, assume \eqref{MUC} and define $\Phi(g):=(\Omega(g), g(\overline{~\stackrel{~}{.}~}~\Omega(g)*00\dots))$.  
By the definition of the fan functional, we have for standard $\alpha^{1}, \beta^{1}\leq_{1}1, g^{2}$ that
\[
0=H(\beta,g(\beta))=H(\beta,g(\overline{\beta}\Omega(g)*00\dots))=H(\beta,g(\overline{\alpha}\Omega(g)*00\dots)),
\]
assuming the antecedent of $\eqref{qffan2}^{\st}$ and $\overline{\alpha}\Omega(g)=\overline{\beta}\Omega(g)$, i.e.\ $\eqref{qffan2}^{\st}$ follows;  The internal principle \eqref{qffan2} follows in the same way.    
To additionally obtain \eqref{nsfan2} from \eqref{MUC},
consider $h^{2}$ defined as: $h(\alpha\oplus\beta):= H(\beta,\Phi(g,H)(2)(\alpha))=0$, and consider $\Omega(h)$.     
By $\eqref{qffan2}^{\st}$, the number $k_{0}=\max(\Omega(h),\Phi(g,H)(1))$ is as in \eqref{nsfan2}.  

\medskip

Secondly, assume \eqref{nsfan2}, let $g^{2}$ be standard and consider the formula
\be\label{opli}
(\forall^{\st}\alpha^{1},\beta^{1}\leq_{1}1)(\exists^{\st}N^{0})[\overline{\alpha}N=\overline{\beta}N\di g(\alpha)=g(\beta)],    
\ee
immediate by standard extensionality.  Let $A_{0}(\alpha\oplus\beta,N)$ be the formula in square brackets in \eqref{opli}.   
By \eqref{nsfan2}, there is standard $k_{0}$ such that
\be\label{dorkuji}
(\forall^{\st}\alpha^{1},\beta^{1}\leq_{1}1)(\exists^{\st}N^{0})(\forall \xi^{1},\gamma^{1}\leq_{1}1)(\overline{\alpha\oplus\beta}k_{0}=\overline{\xi\oplus\gamma}k_{0}\di A_{0}(\xi\oplus\gamma,N)).    
\ee
Now consider $\xi^{1}_{0},\gamma^{1}_{0}\leq_{1}1$ such that $\xi_{0}\approx_{1}\gamma_{0}$ and define standard $\alpha^{1}_{0},\beta^{1}_{0}\leq_{1}1$ by 
$\alpha_{0}\oplus\beta_{0}:=\overline{\xi_{0}\oplus\gamma_{0}}k_{0}*00\dots$.  
Now apply \eqref{dorkuji} for $\alpha=\alpha_{0}$ and $\beta=\beta_{0}$ to obtain (standard) $N_{0}$ as in this formula.  By definition, we have $\overline{\alpha_{0}\oplus\beta_{0}}k_{0}=\overline{\xi_{0}\oplus\gamma_{0}}k_{0}$, implying that $A_{0}(\xi_{0}\oplus\gamma_{0},N_{0})$.  However, since $\xi_{0}\approx_{1} \gamma_{0}$, we obtain $g(\xi_{0})=g(\gamma_{0})$ from $A_{0}(\xi_{0}\oplus\gamma_{0},N_{0})$.  
Hence, $g$ is nonstandard continuous, \eqref{druk} follows, and we obtain \eqref{MUC} by Theorem \ref{muck}.  
Applying HAC$_{\textup{int}}$ also yields \eqref{MUC} `directly'.      

\medskip
  
Thirdly, assume $\eqref{qffan2}^{\st}$ or $\eqref{qffan}$ and apply the latter to $H(\alpha,n)=0$ defined as $g(\alpha)=n$.  Clearly, we have $(\forall \gamma\leq_{1}1)H(\gamma, g(\gamma))=0$ and let $\Phi$ be the functional assumed to exist.  
Then by definition, the number
\[\textstyle
\max_{|\alpha^{0}|=\Phi(g,H)(1)\wedge \alpha^{0}\leq_{0^{*}}1} \Phi(g,H)(2)(\alpha*00\dots,g) 
\]
is the supremum of $g$ and Theorem \ref{muck2} yields \eqref{MUC}.  
Finally, \eqref{nsfan2} trivially implies \eqref{nsfan3}, and to prove the remaining implication, proceed as for $ \eqref{nsfan2}\di \eqref{druk}$ in the previous part of the proof.  
\end{proof}

\section{The EMT for uniform boundedness principles}\label{UB41}
In this section, we establish the EMT for the so-called uniform boundedness principle \eqref{F} from \cite{kohlenbach3}*{Chapter~12}.  
The latter is defined as follows:
\be\label{F}
(\forall \Phi^{0\di 2}, y^{0\di 1})(\exists y_{0}\leq_{0\di 1}y)(\forall k^{0},z\leq _{1}y(k))\big[\Phi(k)(z)\leq_{0}\Phi(k)(y_{0}(k))  \big]. \tag{$F$}
\ee
This principle is called `non-standard' by Kohlenbach in \cite{kohlenbach3} as it is classically false, but we avoid this phrasing for obvious reasons. 
As to its provenance, the principle \eqref{F} finds applications in \emph{proof mining} (See e.g.\ \cites{kohlenbach4,kohlenbach5,kohlenbach6,kohlenbach7,kohlenbach8}) as a generalisation of the fan functional.  
In two words, the aim of proof mining is to extract upper bounds or similar witnessing information for existential quantifiers from (possibly non-constructive) proofs of mathematical theorems (See \cite{kohlenbach3} for an introduction).    

\medskip

The principle $\eqref{F}$ has the following nonstandard and uniform versions.
\begin{align}
\big(\exists \Theta^{((0\di 2)\times(0\di 1))\di (0\di 1)}&\big)(\forall \Phi^{0\di 2}, y^{0\di 1})(\forall k^{0})(\forall z\leq _{1}y(k))\notag\\
&\big[\Phi(k)(z)\leq_{0}\Phi(k)(\Theta(\Phi,y)(k))) \wedge \Theta(\Phi,y)\leq_{0\di 1}y  \big].  \label{UF}\tag{$\text{\emph{UF}}$}
\end{align}
\be\label{NSF}
(\forall^{\st} \Phi^{0\di 2}, y^{0\di 1})(\exists^{\st} y_{0}\leq_{0\di 1}y)(\forall^{\st} k^{0})(\forall z\leq _{1}y(k))\big[\Phi(k)(z)\leq\Phi(k)(y_{0}(k))  \big]. \tag{${F}^{*}$}
\ee
By the second conjunct in \eqref{UF}, we have $\Theta(\Phi,y)(k)\leq_{1}y(k)$ for fixed $k$, which implies that $\Phi(k)(\Theta(\Phi,y)(k))$ is a \emph{maximum} of $\Phi(k)(z)$ for $z\leq_{1}y(k)$.     
Thus, $\Theta$ is minimal in the sense of providing the least upper bound to $\Phi(k)(z)$ for $z\leq_{1}y(k)$.  

\medskip

The principle $\eqref{F}$ implies that all type $1\di 1$ objects are continuous on a bounded domain by \cite{kohlenbach3}*{Prop.\ 12.3 and Prop.\ 12.6, p.\ 226}.  
Thus, we consider the following:  
\begin{align}
\big(\exists\Psi^{((1\di 1)\times 1)\di 1})&\big(\forall \Lambda^{1\di 1}, y^{1},k^{0})(\forall z_{1},z_{2}\leq_{1}y)\notag\\
&\big[ \overline{z_{1}}\Psi(\Lambda,y)(k)=\overline{z_{1}}\Psi(\Lambda,y)(k)\di  \overline{\Lambda(z_{1})}k = \overline{\Lambda(z_{2})}k   \big].\label{UCO} \tag{UCO}
\end{align}
\begin{rem}\label{maarminnekes}\rm
Note that $\Psi(\Lambda,y)(k)$ in \eqref{UCO} can be assumed to be the \emph{least} such number for fixed $k, y, \Lambda$ (just like $\Omega(\varphi)$ from \eqref{MUC}).  
Indeed, a finite search bounded in terms of $\Psi(\Lambda,y)(k)$ and $\max_{i\leq\Psi(\Lambda,y)(k)}y(i)$ suffices to verify
whether $\Psi(\Lambda,y)(k)$ is the least number as in \eqref{UCO}.  
\end{rem}
The nonstandard versions of \eqref{UCO}$^{\st}$ are as follows:
\be
(\forall^{\st} \Lambda^{1\di 1}, y^{1})(\forall z_{1},z_{2}\leq_{1}y)\big[ {z_{1}}\approx_{1}{z_{1}}\di  {\Lambda(z_{1})}\approx_{1} {\Lambda(z_{2})}   \big].\label{UC} \tag{$\mathfrak{G}$}
\ee
\be
(\forall^{\st} \Lambda^{1\di 1}, y^{1})(\exists^{\st}\xi^{1})(\forall^{\st}  k)(\forall z,w\leq_{1}y)\big[ \overline{z}\xi(k)=\overline{w}\xi(k)\di  \overline{\Lambda(z)}k = \overline{\Lambda(w)}k   \big].\label{UD} \tag{$\mathfrak{H}$}
\ee 
Clearly, the three previous continuity statements imply standard extensionality for standard type $1\di1$-functionals as follows:
\be\label{Extmore}  
(\forall^{\st} \Lambda^{1\di 1},f^{1},g^{1})(f\approx_{1}g\di \Lambda(f)\approx_{1} \Lambda(g)),
\ee
which also follows from Theorem \ref{halleh} above.  

\medskip

By \cite{kohlenbach3}*{Prop.\ 12.7}, the seemingly weaker axiom $F^{-}$ can be derived from $F$ given QF-AC$^{1,0}$.  Hence, we could consider uniform and nonstandard versions of $F^{-}$, which would be equivalent to \eqref{UF} too.   
\begin{thm}\label{muck8}
In $\RCAO+\QFAC^{2,0}$, we have 
\[
 \eqref{UF}\asa\eqref{UF}^{\st}\asa  \eqref{NSF}\asa \eqref{UCO}^{\st}\asa \eqref{UCO}\asa \eqref{UC}\asa \eqref{UD}.
\]
The extra axiom of choice is only necessary for the third forward implication.  
\end{thm}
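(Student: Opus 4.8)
The overall strategy follows the template already used for Theorems \ref{muck}--\ref{muck4}: first relate the uniform version to its $\st$-variant via \textup{PF-TP}$_{\forall}$, then close a cycle through the $\st$-version of \eqref{F}, the continuity reformulations \eqref{UCO}$^{\st}$, \eqref{UCO}, \eqref{UC}, \eqref{UD}, using $\Omega$-CA and the canonical-approximation technique. I would organise it as the cycle
\[
\eqref{UF}^{\st}\di\eqref{NSF}\di\eqref{UCO}^{\st}\di\eqref{UD}\di\eqref{UC}\di\eqref{UCO}\di\eqref{UF}\di\eqref{UF}^{\st},
\]
and I would handle the first equivalence $\eqref{UF}\asa\eqref{UF}^{\st}$ exactly as for $\eqref{MUC}\asa\eqref{MUC}^{\st}$: by Remark \ref{maarminnekes} the functional $\Theta$ in \eqref{UF} is uniquely determined (it is the pointwise least upper bound witness), so the body of \eqref{UF} is internal and parameter-free except for $\Theta$, and the contraposition of \textup{PF-TP}$_{\forall}$ makes $\Theta$ standard; conversely, adding to $\RCAO$ the Skolem-constant axiom analogous to \eqref{durfall} for $\Theta$ lets us transfer $\eqref{UF}^{\st}$ to $\eqref{UF}$ by applying \textup{PF-TP}$_{\forall}$ to the internal body.

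\textbf{Key steps.} For $\eqref{UF}^{\st}\di\eqref{NSF}$, simply instantiate at standard $\Phi,y$ and note the standard witness $y_{0}:=\Theta(\Phi,y)$ works, dropping `st' on the innermost $(\forall z\leq_1 y(k))$ harmlessly since that quantifier is bounded. For $\eqref{NSF}\di\eqref{UCO}^{\st}$, given standard $\Lambda^{1\di1}$ and $y^1$, I would encode pointwise continuity of $\Lambda$ into the boundedness shape: define for each $k$ a functional $\Phi_k^{0\di2}$ whose value at $z$ is the least $N$ (if any, capped appropriately) such that $\overline{z_1}N=\overline{z_2}N\di\overline{\Lambda(z_1)}k=\overline{\Lambda(z_2)}k$ fails for some witness — more precisely, using standard extensionality (Theorem \ref{halleh}, hence the need for $\QFAC^{2,0}$) and $\QFAC^{2,0}$ relative to `st' coming from \textup{HAC}$_{\textup{int}}$ to get a standard modulus functional, then feeding the bound into \eqref{NSF} to extract a standard $\xi$. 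For $\eqref{UCO}^{\st}\di\eqref{UD}$ one reads off the nonstandard statement directly; $\eqref{UD}\di\eqref{UC}$ is the standard argument that a standard modulus of continuity at a point plus the fact that standard values of $\xi(k)$ are standard forces $z_1\approx_1 z_2\di\Lambda(z_1)\approx_1\Lambda(z_2)$; $\eqref{UC}\di\eqref{UCO}$ is the canonical-approximation step: define
\[
\Psi(\Lambda,y,k,M):=(\mu N\leq M)(\forall z_1,z_2\leq_1 y\text{ with }|z_i|= M)\big[\overline{z_1}N=\overline{z_2}N\di\overline{\Lambda(z_1)}k=\overline{\Lambda(z_2)}k\big],
\]
show using \eqref{UC} (first deriving the bounded-domain analogue of \eqref{cruxsks}) that $\Psi(\Lambda,y,\cdot,M)$ is $\Omega$-invariant for standard $\Lambda,y$, and apply $\Omega$-CA to obtain the standard $\Psi$ required by \eqref{UCO}$^{\st}$; by the uniqueness noted in Remark \ref{maarminnekes} plus \textup{PF-TP}$_{\forall}$ this upgrades to internal \eqref{UCO}. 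Finally $\eqref{UCO}\di\eqref{UF}$: given \eqref{UCO} with its least-modulus $\Psi$, a bounded search of length $\Psi(\Lambda,y)(k)$ over initial segments of sequences $\leq_1 y$ computes, for the functional $\Phi$ reconstructed from $\Lambda:=\lambda z.\lambda k.\Phi(k)(z)$, the pointwise maximum witness $\Theta$, yielding \eqref{UF} (as in the $\eqref{MUC}\di\eqref{SUP}$ step of Theorem \ref{muck2}).

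\textbf{Main obstacle.} The delicate point is the step into the boundedness format, $\eqref{NSF}\di\eqref{UCO}^{\st}$ (and symmetrically packaging \eqref{UCO} back into \eqref{F} for the last step). The type $0\di2$ versus $1\di1$ mismatch and the presence of the majorant $y$ mean one must be careful that the auxiliary functional fed to \eqref{NSF} is genuinely standard and that its bound $y$ can be taken standard — this is exactly where standard extensionality is unavailable as an axiom and must be routed through Theorem \ref{halleh}, which is why $\QFAC^{2,0}$ is needed (and, as the theorem asserts, only there: the $\Omega$-CA and \textup{PF-TP}$_{\forall}$ steps are clean in $\RCAO$ alone). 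A secondary subtlety is verifying $\Omega$-invariance of $\Psi$: one needs the bounded-domain strengthening that a standard $\Lambda$ which is pointwise $\approx_1$-continuous actually has, for each standard $k$, a single standard modulus $N$ valid for \emph{all} $z_1,z_2\leq_1 y$, which follows from \eqref{UC} by an overspill/underspill argument analogous to the passage from \eqref{druk} to \eqref{cruxsks} in the proof of Theorem \ref{muck}.
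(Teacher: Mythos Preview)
Your overall strategy matches the paper's: the same canonical-approximation/$\Omega$-CA step for the continuity block $\eqref{UC}\asa\eqref{UD}\asa\eqref{UCO}^{\st}\asa\eqref{UCO}$, the same PF-TP$_{\forall}$-plus-uniqueness argument for the $\st$-equivalences, and the same use of standard extensionality (Theorem~\ref{halleh}) together with \eqref{NSF} for the step that consumes $\QFAC^{2,0}$. Two imprecisions are worth flagging, however.

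First, in $\eqref{UF}^{\st}\di\eqref{NSF}$ you say the `st' on $(\forall z\leq_{1}y(k))$ can be dropped ``since that quantifier is bounded''. That is not a valid move: the bound $y(k)$ is a type-$1$ object, and type-$1$-bounded quantifiers do not in general commute with `st'. The correct route---which you have already set up---is to pass through the \emph{internal} principle \eqref{UF} (via the equivalence $\eqref{UF}\asa\eqref{UF}^{\st}$ you establish first), where the inner universal quantifier is genuinely unrestricted; this is exactly how the paper argues.

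Second, in $\eqref{UCO}\di\eqref{UF}$ you set $\Lambda:=\lambda z.\lambda k.\Phi(k)(z)$ and invoke $\Psi(\Lambda,y)$, but \eqref{UCO} takes a single bound $y^{1}$ while \eqref{UF} carries $y^{0\di 1}$ with the domain bound $y(k)$ varying in $k$; there is no single $y^{1}$ to feed in. The paper instead fixes standard $k$, puts $\Lambda_{k}(z):=(\Phi(k)(z),\Phi(k)(z),\dots)$ and $y_{1}:=y(k)$, applies \eqref{UCO} to this pair to obtain $\xi=\Psi(\Lambda_{k},y(k))$, and then defines $\Theta(\Phi,y)(k)$ by the bounded search over initial segments of length $\xi(1)$ bounded by $\overline{y(k)}\xi(1)$. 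Your sketch should be read in this per-$k$ fashion. (The paper in fact proves $\eqref{UCO}\di\eqref{NSF}$ and $\eqref{UCO}\di\eqref{UF}^{\st}$ in one stroke this way, and closes the loop with $\eqref{NSF}\di\eqref{UC}$ rather than your $\eqref{NSF}\di\eqref{UCO}^{\st}$, but the content is the same.)
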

\begin{proof}
The equivalences $\eqref{UC}\asa \eqref{UD}\asa\eqref{UCO}^{\st}\asa \eqref{UCO}$ are proved in the same way as $\eqref{MUC}^{\st}\asa \eqref{MUC}\asa \eqref{druk}\asa \eqref{MUC2}$ in Theorem \ref{muck} and Remark \ref{tokkiep}, hence we shall be brief.  
First of all, as noted in Remark \ref{maarminnekes}, we may assume $\Psi(\Lambda, y)(k)$ as in \eqref{UCO} is the least number as in the latter.  Using PF-TP$_{\forall}$, we easily obtain $\eqref{UCO} \asa \eqref{UCO}^{\st}$ and that the former implies \eqref{UC}.    
Furthermore, define the functional $\Theta(\Lambda,y,M)$ as follows: For any $k$, $\Theta(\Lambda,y,M)(k)$ is equal to:
\[
(\mu N\leq M)(\forall z^{0},w^{0}\leq_{0}\overline{y}M)\big[(|z|,|w|=M \wedge \overline{z}N=\overline{w}N)\di \overline{\Lambda(z)}k=\overline{\Lambda(w)}k\big].
\]
Now this functional is $\Omega$-invariant given \eqref{UC}, as the latter implies:
\be\label{cruxsks2}
(\forall^{\st} \Lambda^{1\di 1}, y^{1},k^{0})(\exists^{\st}N)(\forall z_{1},z_{2}\leq_{1}y)\big[ \overline{z_{1}}N=\overline{z_{1}}N\di  \overline{\Lambda(z_{1})}k= \overline{\Lambda(z_{2})}k   \big], \tag{$\mathfrak{K}$}
\ee
in the same way as \eqref{druk} implies \eqref{cruxsks}.  By $\Omega$-CA, \eqref{UCO}$^{\st}$ now follows.  Finally, \eqref{UD} clearly implies \eqref{UC}, while \eqref{cruxsks2} implies the latter by applying HAC$_{\textup{int}}$.         

\medskip
  
For the remaining equivalences, we first prove $\eqref{UCO}\di \eqref{NSF}$.  To this end, fix standard $y^{0\di 1}, \Phi^{0\di 2}, k^{0}$, define $\Lambda^{1\di 1}$ as $\Lambda(z):= (\Phi(k)(z),\Phi(k)(z),\dots)$ for $z^{1}$, and define $y_{1}:=y(k)$.  
Now let the (standard by PF-TP$_{\forall}$) functional $\Psi$ be as in $\eqref{UCO}$, i.e.\ for standard $\xi^{1}:=\Psi(\Lambda,y_{1})$ we have 
\[
(\forall^{\st}  l)(\forall z,w\leq_{1}y_{1})\big[ \overline{z}\xi(l)=\overline{w}\xi(l)\di  \overline{\Lambda(z)}l = \overline{\Lambda(w)}l  \big],
 \]
 implying by definition that
\be\label{onderbuik}
(\forall^{\st}  l)(\forall z,w\leq_{1}y(k))\big[ \overline{z}\xi(l)=\overline{w}\xi(l)\di  \Phi(k)(z)= \Phi(k)(w) \big].
\ee
Now we obtain the required $y_{0}^{0\di 1}$ by defining $y_{0}(k)$ as $z_{0}*00\dots$ where $|z_{0}|=\xi(1)\wedge z_{0}\leq_{0}\overline{y(k)}\xi(1)$ and $\Phi(k)(z_{0}*00)=\max_{|w|=\xi(1)\wedge w\leq_{0}\overline{y(k)}\xi(1)}\Phi(k)(w*00\dots)$, and \eqref{NSF} follows.   
Furthermore, the implication $\eqref{UCO}\di \eqref{UF}^{\st}$ follows by putting $\Theta(\Phi,y)(k):=y_{0}(k)$ as defined above.  

\medskip 
 
Next, to prove that $\eqref{UF}^{\st}\di \eqref{NSF}$, proceed as in the first part of the proof: Obtain $\eqref{UF} \asa \eqref{UF}^{\st}$ using PF-TP$_{\forall}$, and the former immediately implies \eqref{NSF}.  
Finally, assume \eqref{NSF} and consider for standard $\Lambda^{1\di 1}$ and $y^{1}$,
\be\label{kokje}
(\forall^{\st}z_{1},z_{2}\leq_{1}y)(\forall^{\st}k^{0})(\exists^{\st}N^{0})\big[\overline{z_{1}}N=\overline{z_{2}}N\di \overline{\Lambda(z_{1})}k = \overline{\Lambda(z_{2})}(k)    \big],
\ee
which follows easily from the standard extensionality of $\Lambda$.  Now let $Y^{(1\times 1\times 0)\di 0}$ be obtained from applying $\QFACP$ to a properly coded 
version of \eqref{kokje} and define $\Phi^{0\di 2}$ as $\Phi(k)(z\oplus w):=Y(z,w,k)$, and $w^{0\di 1}$ as $(y,y,\dots)$.  By  \eqref{NSF}, 
there is standard $y_{0}^{0\di 1}\leq_{0\di 1}w$ such that $(\forall^{\st} k^{0})(\forall z\leq _{1}w(k))\big[\Phi(k)(z)\leq_{0}\Phi(k)(y_{0}(k))  \big]$.  The latter implies by definition that 
$(\forall^{\st} k^{0})(\forall z_{1},z_{2}\leq _{1}y)\big[Y(z_{1},z_{2},k))\leq_{0} N_{0}(k)  \big]$, for $N_{0}(k):=\Phi(k)(y_{0}(k))$, which does not involve $z_{1},z_{2}$.  
Hence, \eqref{kokje} implies 
\[
(\forall z_{1},z_{2}\leq_{1}y)(\forall^{\st}k^{0})\big[\overline{z_{1}}N_{0}(k)=\overline{z_{2}}N_{0}(k)\di \overline{\Lambda(z_{1})}k = \overline{\Lambda(z_{2})}(k)    \big],
\]
immediately implying \eqref{UC} as $N_{0}(\cdot)$ is standard.  We could also apply HAC$_{\textup{int}}$ to obtain \eqref{UCO}$^{\st}$ directly.    
\end{proof}
\begin{rem}\label{tuni}\rm
Similar to \eqref{SUP}$^{\dagger}$ and \eqref{druk4}$^{\dagger}$ from Section \ref{fafi}, we could obtain `daggered' versions of \eqref{UF}$^{\st}$ and \eqref{NSF} by lowering the type of 
the objects claimed to exist by the latter;  This is possible in light of the definition of $\Theta$ below \eqref{onderbuik}.  
These versions would be equivalent without the use of standard extensionality.  Furthermore, using the functional $\Psi$ from \eqref{UCO}, it is easy to obtain the supremum of $\Lambda^{1\di 1}$ as in this principle.
Thus, we could consider a version of \eqref{SUP}$^{\st}$ for $\Lambda^{1\di 1}$ involving `$\leq_{1}$', and obtain results similar to Theorem~\ref{muck2}.     
\end{rem}
An alternative uniform boundedness principle used extensively in proof mining is $\Sigma^{0}_{1}$-UB (\cite{kohlenbach2}*{Def.\ 12.1}).  
The uniform version of $\Sigma_{1}^{0}$-UB is as follows:  
\begin{align}
(\exists \Psi& \in \mathfrak{L})(\forall y^{0\di 1}, H,g)\big[(\forall k^{0})(\forall x^{1}\leq_{1}y(k))[H(x,y,g(x,y,k),k)=0] \notag\\
 & \di(\forall k^{0})(\forall x^{1}\leq_{1}y(k))(\exists z^{0}\leq_{0}\Psi(y,g)(k))[H(x,y,z,k)=0]    \big].\label{USB} \tag{USB}
\end{align}
Again `$\Psi\in \mathfrak{L}$' means that $\Psi(y,g)$ is the least number with the property as in \eqref{USB}.  
The nonstandard version is as follows:
\begin{align}
(\forall^{\st}& y^{0\di 1}, H)\big[(\forall^{\st} k^{0})(\forall^{\st} x^{1}\leq_{1}y(k))(\exists^{\st} z^{0})[H(x,y,z,k)=0] \notag\\
 & \di(\exists^{\st}\xi^{1})(\forall^{\st} k^{0})(\forall x^{1}\leq_{1}y(k))(\exists z^{0}\leq_{0}\xi(k))[H(x,y,z,k)=0]    \big].\label{NSB} \tag{$\mathfrak{S}$}
\end{align}

\begin{thm}
In $\RCAO+\QFAC^{2,0}$, we have $\eqref{UF}\asa\eqref{NSB}\asa \eqref{USB}$.
\end{thm}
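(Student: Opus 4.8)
The plan is to establish the cycle $\eqref{UF}\di\eqref{USB}\di\eqref{UF}$ and $\eqref{NSB}\di\eqref{UF}\di\eqref{NSB}$, using the equivalences of Theorem~\ref{muck8} --- especially $\eqref{UF}\asa\eqref{UCO}\asa\eqref{UC}$ and $\eqref{UF}\asa\eqref{UF}^{\st}$ --- as a bridge throughout. For $\eqref{UF}\di\eqref{USB}$, given $y^{0\di1}$, $H$, $g$, define $\Phi^{0\di2}$ by letting $\Phi(k)(x)$ be $(\mu z\leq_{0}g(x,y,k))[H(x,y,z,k)=0]$ when such $z$ exists and $0$ otherwise, apply $\eqref{UF}$ to $\Phi$ and $y$ to obtain $\Theta(\Phi,y)\leq_{0\di1}y$ with $\Phi(k)(z)\leq_{0}\Phi(k)(\Theta(\Phi,y)(k))$ for all $k$ and all $z\leq_{1}y(k)$, and put $\Psi(y,g)(k):=\Phi(k)(\Theta(\Phi,y)(k))$. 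Under the antecedent of $\eqref{USB}$ this equals $\max_{x\leq_{1}y(k)}(\mu z)[H(x,y,z,k)=0]$, which is exactly the \emph{least} number as required in $\eqref{USB}$, so $\Psi\in\mathfrak{L}$ comes for free; this is a routine variant of the opening arguments of Theorems~\ref{muck} and \ref{muck8}.

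For $\eqref{USB}\di\eqref{UF}$ I would derive $\eqref{UCO}$ and then invoke Theorem~\ref{muck8}. Given $\Lambda^{1\di1}$ and $y^{1}$, code pairs of sequences by interleaving, set $\tilde y(k):=\langle y,y\rangle$ (constant in $k$), let $H(\langle z_{1},z_{2}\rangle,\tilde y,w,k)=0$ abbreviate the internal, decidable statement $\overline{z_{1}}w=\overline{z_{2}}w\di\overline{\Lambda(z_{1})}k=\overline{\Lambda(z_{2})}k$, and let $g(\langle z_{1},z_{2}\rangle,\tilde y,k):=(\mu w)[\overline{z_{1}}w\ne\overline{z_{2}}w\vee\overline{\Lambda(z_{1})}k=\overline{\Lambda(z_{2})}k]$. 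Then $g$ witnesses the antecedent of $\eqref{USB}$ \emph{unconditionally}, so $\eqref{USB}$ yields a bound $\xi(k)$ such that for all $z_{1},z_{2}\leq_{1}y$ there is $w\leq\xi(k)$ with $[\overline{z_{1}}w=\overline{z_{2}}w\di\overline{\Lambda(z_{1})}k=\overline{\Lambda(z_{2})}k]$; since $\overline{z_{1}}\xi(k)=\overline{z_{2}}\xi(k)$ then forces $\overline{z_{1}}w=\overline{z_{2}}w$, the value $\xi(k)$ is a modulus of continuity for $\Lambda$ on $\{z:z\leq_{1}y\}$, and a short comparison with the least such modulus shows $\xi$ is precisely the functional needed in $\eqref{UCO}$. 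Theorem~\ref{muck8} then gives $\eqref{UF}$, whence $\eqref{UF}\asa\eqref{USB}$.

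For $\eqref{NSB}\di\eqref{UF}$ I would similarly derive $\eqref{UC}$. Given standard $\Lambda^{1\di1}$ and $y^{1}$, apply $\eqref{NSB}$ to $\tilde y$ and the $H$ of the previous paragraph; its antecedent holds because $\Lambda$ is standard extensional (Theorem~\ref{halleh}, where $\QFAC^{2,0}$ enters): for standard $k$ and standard $z_{1},z_{2}\leq_{1}y$ there is a standard $w$ making the implication true --- take $w=0$ if $\overline{\Lambda(z_{1})}k=\overline{\Lambda(z_{2})}k$, and otherwise a standard $w$ with $\overline{z_{1}}w\ne\overline{z_{2}}w$, which exists since standard extensionality excludes $z_{1}\approx_{1}z_{2}$. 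Hence $\eqref{NSB}$ yields a standard $\xi^{1}$ with $\overline{z_{1}}\xi(k)=\overline{z_{2}}\xi(k)\di\overline{\Lambda(z_{1})}k=\overline{\Lambda(z_{2})}k$ for all standard $k$ and all $z_{1},z_{2}\leq_{1}y$; for $z_{1}\approx_{1}z_{2}\leq_{1}y$ and standard $k$ the hypothesis holds since $\xi(k)$ is standard, so $\Lambda(z_{1})\approx_{1}\Lambda(z_{2})$, which is $\eqref{UC}$, and Theorem~\ref{muck8} again gives $\eqref{UF}$.

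The remaining direction $\eqref{UF}\di\eqref{NSB}$ is the main obstacle. Given standard $y^{0\di1}$, $H$ satisfying the antecedent of $\eqref{NSB}$, $\QFACP$ (a consequence of $\textup{HAC}_{\textup{int}}$) provides a standard $g$ with $H(x,y,g(x,y,k),k)=0$ for all standard $k$ and all standard $x\leq_{1}y(k)$; by Theorem~\ref{muck8} we have $\eqref{UF}^{\st}$, and applying it to $\Phi(k)(x):=g(x,y,k)$ and $y$ produces a standard $\xi^{1}$, $\xi(k):=g(\Theta(\Phi,y)(k),y,k)$, bounding $g(x,y,k)$ over $x\leq_{1}y(k)$. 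The difficulty is that $g$ was shown correct only at \emph{standard} $x$, whereas $\eqref{NSB}$ demands a witness below $\xi(k)$ at \emph{every} $x\leq_{1}y(k)$ (for standard $k$). This gap is closed by continuity: by Theorem~\ref{muck8}, $\eqref{UF}$ implies $\eqref{UC}$, which in turn implies $\eqref{cruxsks2}$, so for standard $k$ the standard type $1\di1$-functional sending $x$ to the constant sequence with value $H(x,y,g(x,y,k),k)$ has a standard modulus $N_{k}$ on the fan $\{x:x\leq_{1}y(k)\}$. Since $y(k)$ is standard, every $x\leq_{1}y(k)$ has standard entries, so $s:=\overline{x}N_{k}*00\dots$ is a standard point of that fan, and by the modulus $N_{k}$ together with the property of $g$ at the standard point $s$ we obtain $H(x,y,g(x,y,k),k)=H(s,y,g(s,y,k),k)=0$. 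Thus $(\exists z\leq_{0}\xi(k))[H(x,y,z,k)=0]$ for all $x\leq_{1}y(k)$, and gathering over standard $k$ gives $\eqref{NSB}$. The crux of the whole argument is precisely this transfer of pointwise witnessing from standard fan-points to all fan-points: it is the (nonstandard) continuity content of the uniform boundedness principle, made available through Theorem~\ref{muck8}, that makes it work, whereas the other implications merely re-run the template of Theorems~\ref{muck} and \ref{muck8}.
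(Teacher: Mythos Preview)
Your argument is correct and follows the same overall strategy as the paper: reduce both \eqref{NSB} and \eqref{USB} to one of the continuity principles \eqref{UC}/\eqref{UD}/\eqref{UCO} and invoke Theorem~\ref{muck8}. The routes differ slightly. For $\eqref{UD}\di\eqref{NSB}$ the paper obtains $Y$ via $\QFACP$, applies \eqref{UD} to $x\mapsto Y(x,k_{0})$ to get a standard modulus, pulls this into a standard function via $\textup{HAC}_{\textup{int}}$, and defines $\xi(k)$ as the finite maximum of $Y$ over the resulting grid; your version instead applies \eqref{cruxsks2} to the \emph{composite} $x\mapsto H(x,y,g(x,y,k),k)$, which makes the ``transfer from standard $x$ to all $x$'' step fully explicit (indeed, the paper's sketch leaves this transfer tacit). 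For the \eqref{USB}-equivalence the paper only gestures at ``proceed as for \eqref{qffan}'' via a $\leq_{1}y(k)$-generalised \eqref{MUC}/\eqref{SUP}, whereas you give a direct derivation of \eqref{UCO} from \eqref{USB} and of \eqref{USB} from \eqref{UF}; both routes work.

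Two small points. First, in $\eqref{UF}\di\eqref{NSB}$ you invoke $\eqref{UF}^{\st}$ to bound $g(x,y,k)$; since you need the bound for \emph{all} $x\leq_{1}y(k)$, you should appeal to \eqref{UF} itself (the functional $\Theta$ is standard by PF-TP$_{\forall}$, so $\xi$ is standard while the bound applies to every $x$). Second, in $\eqref{UF}\di\eqref{USB}$ your $\Phi$---and hence your $\Psi(y,g)$---depends on $H$, whereas the statement of \eqref{USB} displays only $\Psi(y,g)$. This is harmless: either regard $H$ as an additional argument (in line with \eqref{qffan} and \eqref{qffan2}, where the functional takes $H$), or simply take $\Phi(k)(x):=g(x,y,k)$ and set $\Psi(y,g)(k):=\Phi(k)(\Theta(\Phi,y)(k))$, which gives an $H$-free bound that majorises the witness $g(x,y,k)$ under the antecedent.
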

\begin{proof}
To establish $\eqref{NSB}\di \eqref{UD}$, let $\Lambda^{1\di 1},y^{1}$ be as in the latter and derive from standard extensionality that 
\be\label{qoded}
(\forall^{\st} z_{1},z_{2}\leq_{1}y)(\forall^{\st} k^{0})(\exists^{\st}n^{0})\big[\overline{z_{1}}n=\overline{z_{2}}n\di \overline{\Lambda(z_{1})}k=\overline{\Lambda(z_{2})}k\big],
\ee
and apply \eqref{NSB} to (a properly coded version of) \eqref{qoded} to obtain \eqref{UD}.  

\medskip

To establish $\eqref{UD}\di \eqref{NSB}$, fix $H,y^{0\di 1}$ as in the latter and suppose 
$(\forall^{\st} k^{0})(\forall^{\st} x^{1}\leq_{1}y(k))(\exists^{\st} z^{0})[H(x,y,z,k)=0]$.  Apply $\QFACP$ to obtain $Y$ such that we have $(\forall^{\st} k^{0})(\forall^{\st} x^{1}\leq_{1}y(k))[H(x,y,Y(x,k),k)=0]$.
Fix standard $k_{0}$ and apply \eqref{UD} to $\Lambda^{1\di 1}:=(Y(x,k_{0}),Y(x,k_{0}),\dots)$ to obtain
\[
(\exists^{\st}\phi^{1})(\forall^{\st}  l)(\forall z,w\leq_{1}y(k_{0}))\big[ \overline{z}\phi(l)=\overline{w}\phi(l)\di  \overline{Y(z,k_{0})}l = \overline{Y(w,k_{0})}l   \big], 
\]
implying that (take $N_{0}=\phi(1)$)
\[
(\forall^{\st}k)(\exists^{\st}N_{0})(\forall z,w\leq_{1}y(k))\big[ \overline{z}N=\overline{w}N\di  {Y(z,k)} = {Y(w,k)}  \big]. 
\]
Now use HAC$_{\textup{int}}$ to obtain a standard function $g^{2}$ witnessing the existential 
quantifier in the previous formula, i.e.\  
\[
(\forall^{\st}k)(\forall z,w\leq_{1}y(k))\big[ \overline{z}g(k)=\overline{w}g(k)\di  {Y(z,k)} = {Y(w,k)}.
\]    
Finally, define $\xi(k)$ needed to establish \eqref{NSB} as the maximum of all $Y(z*00\dots,k)$ for $|z|=g(k) \wedge (\forall i\leq |z|)((z(i)\leq y(k)(i)))$.  

\medskip

To prove the equivalence with \eqref{USB}, one proceeds as for \eqref{qffan} in the proof of Theorem \ref{muck3}.  
In particular, one can prove versions of \eqref{MUC} and \eqref{SUP} for `$\leq_{1}y(k)$' instead of `$\leq_{1}1$' from \eqref{USB}.  
Similarly, \eqref{UCO} implies a generalization of \eqref{MUC} (as mentioned just now) and from this \eqref{USB} follows in the same way as \eqref{MUC} implies \eqref{qffan}.     
\end{proof}
One could consider the generalisations of $(F)$ and $\Sigma_{1}^{0}$-UB to higher types (See \cite{kohlenbach3}*{Def.\ 12.11}), and obtain similar results.  

\section{The EMT for weak and pointwise continuity principles}\label{SB}
In this section, we establish the EMT for principles which are weaker than the fan functional.  
Besides proving `more of the same' concerning EMT, this investigation will also yield Theorem \ref{allisone} in which we obtain a large number of equivalences.  
Furthermore, this study will also give rise to Remark \ref{loofer}, in which we show that a \emph{higher-order} principle is implicit in second-order RM, due to the definition of continuity used in the latter.  
Finally, a conceptual motivation for the results in the section was also provided in Remark~\ref{bridgestone}.  

\medskip

In Section \ref{main}, we considered the Reverse Mathematics of the fan functional.  As the latter deals with uniform continuity, a natural question is what happens if we limit ourselves to \emph{pointwise continuity}, 
i.e.\ a modulus-of-continuity-functional as in \eqref{POC}.  Another natural avenue of research is to consider the \emph{weak continuity for numbers} principle \eqref{WCN} as the latter is `that other' non-arithmetical principle of intuitionistic mathematics besides bar induction (\cite{atjeendaaltje}*{p.\ 329}).    
\begin{align}\label{WCN}\tag{WC-N}
(\forall \alpha^{1})(\exists n^{0})&A(\alpha, n)\di 
 (\forall \alpha^{1})(\exists m^{0},x^{0})(\forall \beta^{1})(\overline{\alpha}m=\overline{\beta}m\di A(\beta,x)).\notag
\end{align}
Before we can study these principles, we note that the existence of the fan functional \eqref{MUC} and its equivalent formulations all (classically) imply $\WKL$, which is a kind of compactness principle.  
In the absence of the latter, we shall need the following \emph{weak compactness property}, defined as:  For all 
internal quantifier-free $A_{0}$, we have
\be\label{WCP}\tag{WCP}
(\forall f^{1})(\exists^{\st}n^{0})A_{0}(f,n)\di (\exists^{\st} k^{0})(\forall f^{1})(\exists n^{0}\leq k)A_{0}(f,n).
\ee
Constructively, \eqref{WCP} follows from the so-called non-classical {realization principle} NCR (\cite{brie}*{p.\ 1971}), while classically \eqref{WCP} follows by contraposition  from the idealisation axiom~I of $\RCAO$

\subsection{Local pointwise continuity}\label{LOCO}
In this section, we study the following variants of the fan functional dealing with pointwise continuity.     
\be\label{POC}\tag{MPC}
(\exists \Delta^{3})(\forall \varphi^{2}) (\forall f^{1}, g^{1}\leq_{1}1 )[\overline{f}\Delta(\varphi,f)=_{0}\overline{g}\Delta(\varphi,f)\di \varphi(f)=_{0}\varphi(g)].
\ee
\be\label{POCS}\tag{PC$^{*}$}
(\forall^{\st} \varphi^{2},f^{1}\leq_{1}1)(\exists^{\st}k^{0})(\forall g^{1}\leq_{1}1 )[\overline{f}k=_{0}\overline{g}k\di \varphi(f)=_{0}\varphi(g)].
\ee
\be\label{POCSS}\tag{$\mathfrak{PC}$}
(\forall^{\st} \varphi^{2},f^{1}\leq_{1}1)(\forall g^{1}\leq_{1}1 )[f\approx_{1}g\di \varphi(f)=_{0}\varphi(g)].
\ee
Note that \eqref{POC} is related to C-N as the latter expresses the existence of a modulus-of-continuity functional (See \cite{troelstra1}*{p.\ 77}).  
Furthermore, since \eqref{POC} is false in ECF, a model of $\RCAo$, the former principle is not provable in the latter system (See \cite{troelstra1}*{2.6.7, p.\ 142} and \cite{kohlenbach2}*{Proof of Prop.\ 3.1}).

\begin{thm}\label{trikkeagain}
In $\RCAO$, we have $\eqref{POC}\asa \eqref{POCS} \asa\eqref{POCSS}\asa \eqref{POC}^{\st}$.    
\end{thm}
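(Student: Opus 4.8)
The plan is to establish the cycle of implications $\eqref{POCSS}\di\eqref{POC}^{\st}\di\eqref{POC}\di\eqref{POCS}\di\eqref{POCSS}$, mirroring closely the structure of the proof of Theorem~\ref{muck}, but with three crucial modifications forced by the switch from uniform to pointwise continuity. First, since the relevant functional now depends on the extra argument $f^{1}$, the canonical approximation must be built with $f$ as a parameter. Second, and this is the exceptional feature foreshadowed in the introduction, the weak compactness principle \eqref{WCP} (available in $\RCAO$ via the idealization axiom $\textup{I}$) will be needed to pass from the nonstandard continuity hypothesis \eqref{POCSS} to the existence of a \emph{single standard} modulus at a given point, i.e.\ from \eqref{POCSS} to \eqref{POCS}. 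Third, the final transfer step $\eqref{POC}^{\st}\di\eqref{POC}$ cannot use the uniqueness trick from Remark~\ref{tokkiep} (the modulus is not uniquely determined), so it will instead invoke the Skolem axiom \eqref{dorg} introduced precisely for this purpose in Remark~\ref{tokkier}.

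In more detail: for $\eqref{POC}\di\eqref{POCS}$ one simply restricts the type-$1\di 1$ functional $\Delta$ to standard inputs and notes $\Delta(\varphi,f)$ is standard whenever $\varphi,f$ are, using (the contraposition of) PF-TP$_{\forall}$ applied to the parameter-free internal matrix of \eqref{POC} to assume $\Delta$ is standard; alternatively one can arrange $\Delta(\varphi,f)$ to be the least such number and argue as in Remark~\ref{maarminnekes}. For $\eqref{POCS}\di\eqref{POCSS}$ one argues directly: given standard $\varphi,f\leq_1 1$ and $g\leq_1 1$ with $f\approx_1 g$, the standard $k$ from \eqref{POCS} satisfies $\overline{f}k=\overline{g}k$ (as $k$ is standard and $f\approx_1 g$), whence $\varphi(f)=\varphi(g)$. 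The implication $\eqref{POCSS}\di\eqref{POC}^{\st}$ is the substantive one: assuming \eqref{POCSS}, fix standard $\varphi^{2}$ and standard $f\leq_1 1$; one first wants a \emph{standard} bound $k_0$ with $(\forall g\leq_1 1)[\overline f k_0=\overline g k_0\di\varphi(f)=\varphi(g)]$, i.e.\ \eqref{POCS}. From \eqref{POCSS} we have $(\forall g\leq_1 1)(\exists^{\st}k)[\overline f k=\overline g k\di\varphi(f)=\varphi(g)]$ — indeed if the conclusion failed for all standard $k$ then one could build $g$ with $g\approx_1 f$ and $\varphi(g)\ne\varphi(f)$, contradicting \eqref{POCSS} — and now \eqref{WCP} applied to the internal quantifier-free matrix $A_0(g,k):\equiv[\overline f k=\overline g k\di\varphi(f)=\varphi(g)]$ yields a standard $k_0$ bounding $k$; taking the maximum (or noting monotonicity in $k$) gives \eqref{POCS}. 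Then one defines the canonical approximation
\[
\Xi(\varphi^{2},f^{1},M^{0}):=(\mu k\leq M)(\forall g^{0}\in\{0,1\}^{M})\big[\overline f k=_0\overline g k\di \varphi(f)=_0\varphi(g)\big],
\]
observes via \eqref{POCS} (exactly as \eqref{druk} gives \eqref{cruxsks} in Theorem~\ref{muck}) that $\Xi(\varphi,f,\cdot)$ is $\Omega$-invariant for standard $\varphi,f$, and applies $\Omega$-CA to extract a standard $\Delta(\varphi,f)$ witnessing \eqref{POC}$^{\st}$. Finally $\eqref{POC}^{\st}\di\eqref{POC}$: here one applies \eqref{dorg}, whose matrix $K^{\st}(\Delta)$ holds for the standard $\Delta$ just obtained, to produce the standard associate functional $\Xi$ and standard modulus $\Psi=\Lambda_0$ as in \eqref{dorlpppp}; feeding this through PF-TP$_{\forall}$ exactly as indicated at the end of Remark~\ref{tokkier} (where it is stated that \eqref{dorg} allows proving $\eqref{POC}^{\st}\di\eqref{POC}$) closes the cycle.

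The main obstacle I anticipate is the $\eqref{POCSS}\di\eqref{POCS}$ step — precisely the point where local continuity diverges from the uniform case. In the uniform setting (\eqref{druk}) a standard bound on the modulus over all of Cantor space follows by an underspill/overspill-style argument internal to the standard world, but for pointwise continuity at a fixed standard $f$ the set of "bad" $g$ is not finitely enumerable, so one genuinely needs the weak compactness principle \eqref{WCP}; verifying that \eqref{WCP}'s hypothesis $(\forall g^1)(\exists^{\st}k)A_0(g,k)$ actually holds (rather than merely $(\forall^{\st}g)(\exists^{\st}k)$) requires care, and is exactly where \eqref{POCSS} — with its \emph{unrestricted} quantifier over $g\leq_1 1$ and nonstandard $\approx_1$ — does the work that an $\varepsilon$-$\delta$ hypothesis over standard $g$ alone could not. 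The rest is bookkeeping that parallels Theorems~\ref{muck} and the observations of Remarks~\ref{tokkiep} and~\ref{tokkier}.
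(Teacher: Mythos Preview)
Your overall cycle and three of the four implications are correct and match the paper closely: the use of \eqref{WCP} for $\eqref{POCSS}\di\eqref{POCS}$ is exactly the paper's argument, and the use of \eqref{dorg} together with PF-TP$_{\forall}$ for $\eqref{POC}^{\st}\di\eqref{POC}$ is the intended one (the point being that $K^{\st}(\Lambda_{0})$ is a parameter-free universal formula, since $\Lambda_{0}$ is a constant symbol, so transfer applies).

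There is, however, a genuine gap in your step $\eqref{POCS}\di\eqref{POC}^{\st}$ via the canonical approximation and $\Omega$-CA. Your claim that $\Xi(\varphi,f,M)$ is $\Omega$-invariant ``exactly as \eqref{druk} gives \eqref{cruxsks}'' does not go through: in the proof of Theorem~\ref{muck} the $\Omega$-invariance of \eqref{dagnoor} relies on the fact that \eqref{druk} gives nonstandard continuity at \emph{every} point of Cantor space, so that an arbitrary witness $g^{1}\leq_{1}1$ to the failure of a candidate $k<k_{0}$ can be replaced by its truncation $\overline{g}M*00\dots$ without changing $\varphi(g)$. In the pointwise setting, \eqref{POCSS} (and hence \eqref{POCS}) only gives nonstandard continuity at the \emph{standard} point $f$; you have no control over $\varphi$ at a possibly nonstandard $g$, so you cannot conclude that $\varphi(\overline{g}M*00\dots)\ne\varphi(f)$ from $\varphi(g)\ne\varphi(f)$. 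Concretely, $\Xi(\varphi,f,M)$ is nondecreasing in $M$ and bounded by the true least modulus $k_{0}$, but nothing prevents it from stabilising only at some nonstandard $M$, in which case $\Omega$-invariance fails and $\Omega$-CA cannot be applied.

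The paper sidesteps this entirely: from \eqref{POCS} one applies $\textup{HAC}_{\textup{int}}$ directly (the matrix is internal) to obtain a standard $\Phi$ with $(\exists k\in\Phi(\varphi,f))$ as in \eqref{POCS}, and then $\Delta(\varphi,f):=\max_{i<|\Phi(\varphi,f)|}\Phi(\varphi,f)(i)$ gives \eqref{POC}$^{\st}$ in one line. This is both simpler and avoids the obstruction above.
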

\begin{proof}
First of all, the implication $\eqref{POC}\di \eqref{POCS}$ follows by applying PF-TP$_{\forall}$ to the former principle.      
Now assume \eqref{POCS} and apply HAC$_{\textup{int}}$ to obtain standard $\Phi$ such that $(\exists k\in \Phi(\varphi, f))$ as in \eqref{POCS}.  Define $\Delta(\varphi, f)$ 
as the maximum of $\Phi(\varphi, f)(i)$ for $i<|\Phi(\varphi, f)|$ and note that we obtain \eqref{POC}$^{\st}$.  
Hence, the antecedent of \eqref{dorg} in Remark~\ref{tokkier} holds and $\Lambda_{0}^{3}$ satisfies the consequent of the former formula.  However, by the definition of associate, $\Lambda_{0}(\varphi, \cdot)$ is also a modulus of continuity of standard $\varphi^{2}$, i.e.\ we have $K^{\st}(\Lambda_{0}(\cdot))$.  Since the latter universal formula does not involve any parameters, PF-TP$_{\forall}$ yields $K(\Lambda_{0}(\cdot))$, which is \eqref{POC}.  

\medskip

Secondly, while \eqref{POCS} trivially implies \eqref{POCSS}, the reverse implication requires \eqref{WCP}.  Thus, assume \eqref{POCSS}, and note that 
$(\forall g^{1}\leq_{1}1 )[f\approx_{1}g\di \varphi(f)=_{0}\varphi(g)$ for fixed standard $\varphi $ and $f$, implies $(\forall g^{1}\leq_{1}1 )(\exists^{\st}N)[\overline{f}N=\overline{g}N\di \varphi(f)=_{0}\varphi(g)$ by definition, and also $(\exists^{\st}k)(\forall g^{1}\leq_{1}1 )(\exists N\leq k)[\overline{f}N=\overline{g}N\di \varphi(f)=_{0}\varphi(g)$ by \eqref{WCP}, and \eqref{POCS} follows.  
\end{proof}
\begin{cor}\label{tochwelbel3a}
In $\RCAO$, \eqref{POC} implies
\be\label{CONTK}
(\forall^{\st}F:\R\di \R)(\forall^{\st} x^{1})(\forall y^{1})(x\approx y \di F(x)\approx F(y)).  \tag{$\mathfrak{E}$}
\ee
\end{cor}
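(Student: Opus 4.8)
The plan is to follow, almost verbatim, the part of the proof of Corollary~\ref{tochwelbela} that establishes \eqref{CONT}, with the uniform input \eqref{MUC} replaced by the pointwise input \eqref{POC}. First I would use Theorem~\ref{trikkeagain} to pass from \eqref{POC} to the equivalent form \eqref{POCSS}, which says that every \emph{standard} $\varphi^{2}$ is nonstandard continuous at every \emph{standard} point of Cantor space. Next I would reduce to the case $x,y\in[0,1]$: a standard real $x$ is bounded by some standard $N$, and $y\approx x$ forces $|y|\le N+1$, so replacing $F$ by the standard affine rescaling $\widetilde{F}(t):=F((2N+2)t-(N+1))$ on $[0,1]$ and $x,y$ by $\widetilde{x}:=\tfrac{x+N+1}{2N+2}$ and $\widetilde{y}:=\tfrac{y+N+1}{2N+2}$ in $[0,1]$ preserves $\widetilde{x}\approx\widetilde{y}$, keeps $\widetilde{x}$ standard, and satisfies $\widetilde{F}(\widetilde{x})=F(x)$, $\widetilde{F}(\widetilde{y})=F(y)$.

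So assume $F:\R\di\R$ standard, $x\in[0,1]$ standard, $y\approx x$, and, towards a contradiction, $F(x)\not\approx F(y)$; then there is a standard $k_{0}$ (enlarged if needed for the digit estimate below) with $|F(x)-F(y)|>\tfrac1{k_{0}}$. Exactly as in Corollary~\ref{tochwelbela}, let $\varphi(\alpha,k)$ be the $j$ with $\tfrac{j}{2^{k}}\le[F(\sum_{i=0}^{\infty}\tfrac{\alpha(i)}{2^{i}})](k)<\tfrac{j+1}{2^{k}}$; since $F$ is standard, $\varphi$ is a standard type-$2$ functional, and hence so is $\varphi(\cdot,k_{0})$ for the standard $k_{0}$. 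Now pick a standard binary expansion $\alpha\leq_{1}1$ of the standard real $x$, and — using $x\approx y$ — a binary expansion $\beta\leq_{1}1$ of $y$ with $\alpha\approx_{1}\beta$, which exists by \cite{polarhirst}*{p.~305}, just as in Corollary~\ref{tochwelbela}.

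Applying \eqref{POCSS} to the standard functional $\varphi(\cdot,k_{0})$ at the standard point $\alpha$, from $\beta\approx_{1}\alpha$ we obtain $\varphi(\alpha,k_{0})=_{0}\varphi(\beta,k_{0})$. On the other hand $\varphi(\alpha,k_{0})$ and $\varphi(\beta,k_{0})$ are the $k_{0}$-scale approximations of $F(x)$ and $F(y)$ respectively, and these differ because $|F(x)-F(y)|>\tfrac1{k_{0}}$ — the same elementary digit computation as in Corollary~\ref{tochwelbela}. This contradiction establishes \eqref{CONTK}. One could equally bypass \eqref{POCSS} and use \eqref{POC} directly: the modulus $\Delta(\varphi(\cdot,k_{0}),\alpha)$ is a standard natural number $N_{0}$ since $\Delta$, $\varphi(\cdot,k_{0})$ and $\alpha$ are standard, so $\alpha\approx_{1}\beta$ gives $\overline{\alpha}N_{0}=\overline{\beta}N_{0}$ and hence $\varphi(\alpha,k_{0})=\varphi(\beta,k_{0})$.

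The two points requiring care — both already present in Corollary~\ref{tochwelbela}, hence not new — are that $\varphi(\cdot,k_{0})$ is standard (which is why standardness of $F$ and of $k_{0}$ is used; cf.\ Remark~\ref{tokkiep}) and the bookkeeping with binary expansions (a standard expansion of a standard real, and an $\approx_{1}$-close expansion of an $\approx$-close real), handled as in \cite{polarhirst}. The only genuine conceptual content is the observation that, since \eqref{CONTK} keeps the base point $x$ standard while allowing the perturbation $y$ (and hence $\beta$) to be nonstandard, what is needed is continuity of $\varphi(\cdot,k_{0})$ \emph{at a standard argument} — precisely the content of \eqref{POCSS} — rather than the uniform continuity supplied by \eqref{MUC}.
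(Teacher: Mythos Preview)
Your proof is correct and takes essentially the same approach as the paper, which simply states ``Immediate from the proof of Corollary~\ref{tochwelbela}.'' You have in fact supplied more detail than the paper does: the affine rescaling to reduce to $[0,1]$ (needed because \eqref{CONTK} is not restricted to the unit interval, unlike \eqref{CONT}), and the explicit observation that standardness of $x$ is what allows \eqref{POCSS} to replace \eqref{druk} in the argument.
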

\begin{proof}
Immediate from the proof of Corollary \ref{tochwelbela}.  
\end{proof}
In light of the proofs in this section, it seems that the idealization axiom in the form \eqref{WCP} is essential to obtaining the associated theorems.  
The power of this axiom is that it can `push standard type 0-existential quantifiers through universal quantifiers', intuitively speaking.  However, this means we can also 
use \eqref{WCP} to obtain principles like $T^{*}$ from suitable nonstandard principles.  We now consider one example, and more are given in Section \ref{layola}.  
\begin{cor}\label{kloppip}
In  $\RCAO$, \eqref{MUC} is equivalent to
\be\label{donkio}
(\forall^{\st}\varphi^{2})(\forall f^{1}\leq_{1}1)(\exists^{\st}k^{0})(\forall^{\st}g^{1}\leq_{1}1)(\overline{f}k=\overline{g}k\di \varphi(f)=\varphi(g)), \tag{$\mathfrak{R}$}
\ee
i.e.\ standard pointwise continuity at \pmb{every} point of Cantor space.
\end{cor}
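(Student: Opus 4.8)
The plan is to prove the equivalence in two steps, using Theorem~\ref{muck} throughout to interchange \eqref{MUC} with its reformulations \eqref{druk} and \eqref{MUC2}. The forward implication will be immediate: by Theorem~\ref{muck}, \eqref{MUC} gives \eqref{MUC2}, and for standard $\varphi^{2}$ the standard witness $n^{0}$ supplied by \eqref{MUC2} serves as the number $k$ in \eqref{donkio} \emph{uniformly} over all $f^{1}\leq_{1}1$ (in fact with $\forall g$ in place of $\forall^{\st}g$). Hence the content of the corollary is the reverse implication \eqref{donkio}$\,\di\,$\eqref{MUC}, which by Theorem~\ref{muck} reduces to deriving \eqref{druk}, i.e.\ that every standard $\varphi^{2}$ is nonstandard continuous on Cantor space.

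So fix a standard $\varphi^{2}$ and $f^{1},g^{1}\leq_{1}1$ with $f\approx_{1}g$; the goal is $\varphi(f)=\varphi(g)$. Applying \eqref{donkio} to $f$ and, separately, to $g$ yields standard numbers $k_{1},k_{2}$ such that $\varphi(f)=\varphi(h)$ for every standard $h\leq_{1}1$ with $\overline{f}k_{1}=\overline{h}k_{1}$, and likewise for $g$ with $k_{2}$. Put $m:=\max(k_{1},k_{2})$ (standard) and let $h_{0}:=\overline{f}m*00\dots\leq_{1}1$, the sequence agreeing with $f$ strictly below $m$ and identically $0$ from $m$ onwards. The decisive observation is that $h_{0}$ is \emph{standard}: since $m$ is standard and the standard naturals form an initial segment, $\overline{f}m$ is a standard finite binary sequence and hence so is $h_{0}=\overline{f}m*00\dots$; for the same reason $f\approx_{1}g$ forces $\overline{f}m=\overline{g}m$. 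Then $\overline{h_{0}}m=\overline{f}m=\overline{g}m$, so (using $k_{1},k_{2}\leq m$) we get $\overline{h_{0}}k_{1}=\overline{f}k_{1}$ and $\overline{h_{0}}k_{2}=\overline{g}k_{2}$; feeding the standard point $h_{0}$ into the two instances of \eqref{donkio} gives $\varphi(f)=\varphi(h_{0})=\varphi(g)$, as desired. This establishes \eqref{druk}, and \eqref{MUC} follows by Theorem~\ref{muck}.

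The step I expect to be the main obstacle, if one insists on full rigour, is precisely the standardness of the auxiliary point $h_{0}$: it rests on the basic fact that in $\RCAO$ the standard naturals form an initial segment, so that $\overline{f}m$ is standard whenever $m$ is standard and $f\leq_{1}1$. Conceptually this is the familiar device of \emph{truncating} a possibly nonstandard point to a nearby standard one --- here a point on which the (pointwise, standard) continuity information that \eqref{donkio} provides at $f$ and at $g$ can simultaneously be read off --- which is the same idealisation-flavoured manoeuvre that \eqref{WCP} formalises in the proofs above (cf.\ Theorem~\ref{trikkeagain}); in the present argument the truncation alone suffices and no further appeal to the idealisation axiom is needed.
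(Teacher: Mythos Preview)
Your proof is correct, and it takes a genuinely different route from the paper's. The paper first derives from \eqref{donkio} the formula \eqref{proffff}, i.e.\ $(\forall^{\st}\varphi^{2})(\forall f^{1}\leq_{1}1)(\exists^{\st}k^{0})\big(\varphi(f)=\varphi(\overline{f}k*00\dots)\big)$, then applies the idealisation-consequence \eqref{WCP} to push the standard existential quantifier past $(\forall f)$, obtaining a uniform standard bound $n$; this yields \eqref{druk4}$^{\dagger}$ and the paper then appeals to (the proof of) Theorem~\ref{muck2} to reach \eqref{MUC}. Your argument instead proves \eqref{druk} directly: given $f\approx_{1}g$, you apply \eqref{donkio} separately at $f$ and at $g$, take the standard truncation point $h_{0}=\overline{f}m*00\dots$ with $m=\max(k_{1},k_{2})$, and read off $\varphi(f)=\varphi(h_{0})=\varphi(g)$.

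What each approach buys: your route is more elementary---it uses neither \eqref{WCP} nor the detour through the supremum principle, only the basic closure fact that a standard-length initial segment of a binary sequence codes a standard object. The paper's route, by contrast, is an instance of the general template developed in Section~\ref{layola}: the point there is precisely that \eqref{WCP} converts a ``pointwise at every point'' statement like \eqref{donkio} into a uniform one, and Corollary~\ref{kloppip} is presented as the first illustration of this mechanism. So your argument is shorter for this particular corollary, while the paper's argument exhibits the method that is then reused for \eqref{burka}, \eqref{burka2}, \eqref{burka3}, and \eqref{todo}. (Your final paragraph slightly overstates the analogy with \eqref{WCP}: the truncation you use is not an idealisation step at all, which is exactly why your proof is the more economical one here.)
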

\begin{proof}
Note that \eqref{donkio} implies standard extensionality on Cantor space.  
We only need to prove \eqref{MUC} from \eqref{donkio}.  Clearly, the latter implies that
\be\label{proffff}
(\forall^{\st}\varphi^{2})(\forall f^{1}\leq_{1}1)(\exists^{\st}k^{0})(\varphi(f)=\varphi(\overline{f}k*00\dots)),
\ee
since $\overline{h}k*00\dots$ is standard for standard $k$ and any $h^{1}\leq_{1}1$.  Apply \eqref{WCP} to \eqref{proffff} to obtain that
\[
(\forall^{\st}\varphi^{2})(\exists^{\st}n^{0})(\forall f^{1}\leq_{1}1)(\exists k \leq n)(\varphi(f)=\varphi(\overline{f}k*00\dots)),
\] 
which in turn yields (for the same $n^{0}$) that
\[
(\forall^{\st}\varphi^{2})(\exists^{\st}n^{0})(\forall f^{1}\leq_{1}1)(\varphi(f)\leq \max_{|\sigma|\leq n, \sigma{\leq_{0^{*}}1}} \varphi(\sigma*00\dots)).
\] 
The previous formula clearly implies \eqref{druk4}$^{\dagger}$, and using the proof of Theorem \ref{muck2}, we obtain \eqref{MUC}, finishing this proof.
\end{proof}
Note that we could define an equivalent uniform version of \eqref{donkio}, similar to \eqref{POC}$^{\st}$.  
Furthermore, the following principle is a version of \eqref{UD} similar to \eqref{donkio}.  
\[
(\forall^{\st} \Lambda^{1\di 1}, y^{1}, k^{0})(\forall z\leq_{1}y)(\exists^{\st}N)(\forall^{\st}w\leq_{1}y )\big[ \overline{z}N=\overline{w}N\di  \overline{\Lambda(z)}k = \overline{\Lambda(w)}k   \big].
\]
As in the corollary, the previous formula is equivalent to \eqref{UD}, and proving this seems to require a version of \eqref{SUP}$^{\st}$ for $\Lambda^{1\di 1}$, as described in~Remark \ref{tuni}.   
More examples are considered in Section \ref{layola}.    

\medskip

Finally, let $\eqref{POC}_{0}$ and \eqref{POCS}$_{0}$ be the `classical' versions of \eqref{POC} and \eqref{POCS}, i.e.\ the latter principles with $(\forall \varphi^{2})$ replaced by $(\forall \varphi^{2}\in C)$, where `$\varphi^{2}\in C$' is short for pointwise continuity on Cantor space, i.e. 
\[
(\forall f^{1}\leq_{1}1)(\exists N)(\forall g^{1}\leq_{1}1)(\overline{f}N=_{0}\overline{g}N\di \varphi(f)=_{0}\varphi(g)).
\]  
As in Theorem \ref{trikkeagain}, one proves that $\eqref{POC}_{0}\asa \eqref{POCS}_{0}$.  We now argue that the latter principle, and hence apparently the former, is actually implicit in second-order RM due to the RM-definition of continuity. 
This was first observed in \cite{samimplicit}.
\begin{rem}[Continuity in Reverse Mathematics]\label{loofer}\rm
Friedman-Simpson style Reverse Mathematics takes place in second-order arithmetic, i.e.\ only type $0$ and $1$ objects are available.  
As a result, one cannot define real-valued functions `directly' as the latter objects have type $1\di 1$.  
For this reason, a real-valued continuous function is represented in Reverse Mathematics by a \emph{code} as in \cite{simpson2}*{II.6.1}, a notion closely related 
the definition of an \emph{associate} as in \cite{kohlenbach4}*{Def.\ 4.3}. 

\medskip

By \cite{kohlenbach4}*{Prop.~4.4}, the Reverse Mathematics definition of continuity (for higher type objects) corresponds to pointwise continuity \emph{with a continuous modulus of continuity}, i.e.\ the definition of continuity used in Reverse Mathematics involves a slight constructive enrichment compared to the `epsilon-delta' definition.  However, by \cite{kohlenbach4}*{Prop.~4.10}, this enrichment does not affect the Reverse Mathematics of $\WKL_{0}$.  
We now show that codes also gives rise to a \emph{nonstandard} enrichment.  

\medskip 

Since the Reverse Mathematics definition of continuity implicitly involves a modulus, we shall make the latter explicit.  
Hence, we represent a continuous function $\phi$ on Cantor space via a pair of codes $(\alpha^{1}, \beta^{1})$, where $\alpha$ codes $\phi$ and $\beta$ codes a continuous modulus of pointwise continuity $\omega_{\phi}$ of $\phi$.      
In more technical detail, $\alpha$ and $\beta$ satisfy $(\forall \gamma^{1}\leq_{1}1)(\exists N^{0})\alpha(\overline{\gamma}N)>0$ and $(\forall \gamma^{1}\leq_{1}1)(\exists N^{0})\beta(\overline{\gamma}N)>0$;  
The values of $\omega_{\phi}$ and $\phi$ at $\gamma^{1}\leq_{1}1$, denoted $\omega_{\phi}(\gamma)$ and $\phi(\gamma)$, are $\beta(\overline{\gamma}k)-1$ and $\alpha(\overline{\gamma}k)-1$ for any $k^{0}$ such that the latter numbers are at least zero.  
Now the following formula makes sense and expresses that $\omega_{\phi}$ is the modulus of continuity of $\phi$:
\be\label{krif2}
(\forall \zeta^{1}, \gamma^{1}\leq_{1}1)(\overline{\zeta}\omega_{\phi}(\zeta)=\overline{\gamma}\omega_{\phi}(\zeta)\di \phi(\zeta)=\phi(\gamma)).
\ee  
However, to represent a \emph{standard} continuous function $\phi$ on Cantor space, we should require that $\phi$ and $\omega_{\phi}$ satisfy the basic axioms $\mathcal{T}_{\st}$ 
(See \cite{bennosam}*{\S2}) of $\RCAO$, in particular that $\phi(\gamma)$ and $\omega_{\phi}(\gamma)$ are standard for standard $\gamma^{1}\leq_{1}1$.  
To accomplish this, we require that $\alpha$ and $\beta$ are standard and that these codes additionally satisfy:
\begin{align}\label{kruks}
(\forall^{\st} \gamma^{1}\leq_{1}1)(\exists &N^{0})(\exists^{\st}K^{0})[K\geq \alpha(\overline{\gamma}N)>0] \\
&\wedge(\forall^{\st} \gamma^{1}\leq_{1}1)(\exists N^{0})(\exists^{\st}K^{0})[K\geq \beta(\overline{\gamma}N)>0].\notag
\end{align}
Obviously, there are other ways of guaranteeing that $\phi$ and $\omega_{\phi}$ map standard binary sequences to standard numbers.
Whichever way we guarantee that $\omega_{\phi}$ and $\phi$ are standard for standard input, \eqref{krif2} yields that 
\be\label{krif3}
(\forall^{\st} \zeta^{1}\leq_{1}1)(\exists^{\st}N)(\forall  \gamma^{1}\leq_{1}1)(\overline{\zeta}N=\overline{\gamma}N\di \phi(\zeta)=\phi(\gamma)),
\ee
since $\omega_{\phi}(\zeta)$ is assumed to be standard for standard binary $\zeta^{1}$.  Note that \eqref{krif3} implies that $\phi$ is also \emph{nonstandard} pointwise continuous, i.e.\ 
\[
(\forall^{\st} \zeta^{1}\leq_{1}1)(\forall  \gamma^{1}\leq_{1}1)({\zeta}\approx_{1}{\gamma}\di \phi(\zeta)=\phi(\gamma)),
\]
which is the `nonstandard enrichment' we hinted at previously.  
In conclusion, for standard and continuous $\phi$ on Cantor space, we have \eqref{krif3}, which is exactly \eqref{POCS}$_{0}$ for coded functions $\phi$ on Cantor space.    
Hence, we observe that the uniform principle \eqref{POC}$_{0}$ is implicit in second-order RM, due to the special nature of the RM-definition of continuity.  
\end{rem}
\subsection{Weak and global continuity}
Consider the following nonstandard and uniform versions of the weak continuity principle \eqref{WCN}.  
\begin{align}\label{UWC}\tag{UWC}
(\exists \Psi^{(1\times 2)\di (0\times 0)}&)(\forall H, \psi^{2})\big[(\forall f^{1})(H(f,\psi(f))=0)\di \\
&(\forall f^{1}, g^{1})(\overline{f}\Psi(f,\psi)(1)=\overline{g}\Psi(f,\psi)(1)\di H(g,\Psi(f,\psi)(2))=0)\big].\notag
\end{align}
Note that $\Psi(f,\psi)(1)$ can be assumed to be the least\footnote{In other words, there is a binary sequence $h^{0}$ such that $|h|=\Psi(f,\psi)_{1}$ and 
$\overline{f}\Psi(f,\psi)_{1}-1=\overline{h}\Psi(f,\psi)_{1}-1\wedge H(h*00\dots,\Psi(f,\psi)_{2})\ne0$} number as in \eqref{UWC}.  
\begin{align}\label{WCS}\tag{WC$^{*}$}
(\forall^{\st}H)\big[(\forall^{\st}f^{1}&)(\exists^{\st}n^{0})(H(f,n)=0)\di \\
&(\forall^{\st}f^{1})(\exists^{\st}m^{0},x^{0})(\forall g^{1})(\overline{f}m=\overline{g}m\di H(g,x)=0)\big].\notag
\end{align}
\begin{align}\label{WCSS}\tag{$\mathfrak{WC}$}
(\forall^{\st}H)\big[(\forall^{\st}f^{1}&)(\exists^{\st}n^{0})(H(f,n)=0)\di \\
&(\forall^{\st}f^{1} )(\exists^{\st}x^{0})(\forall g^{1})(f\approx_{1}g\di H(g,x)=0)\big].\notag
\end{align}
It is not difficult to show that the previous three principles \emph{limited to Cantor space} are equivalent to \eqref{POC}.  
In \cite{troelstra1}*{\S1.9.19, p.\ 77}, Troelstra also notes that \eqref{WCN} gives rise to certain continuity conditions for type $2$-functionals.  
Thus, we consider the following continuity principles:  
\be\label{CONT1337}\tag{CONT}
(\exists \Psi^{3})(\forall \varphi^{2}, f^{1}, g^{1})\big[\overline{f}\Psi(\varphi, f)=_{0}\overline{g}\Psi(\varphi, g)\di \varphi(f)=\varphi(g) \big].
\ee
\be\label{CONT1338}\tag{$\mathfrak{CO}$}
(\forall^{\st} \varphi^{2}, f^{1})(\forall g^{1})\big[f\approx_{1}g \di \varphi(f)=\varphi(g) \big].
\ee
\be\label{CONT1339}\tag{CO$^{*}$}
(\forall^{\st} \varphi^{2}, f^{1})(\exists^{\st}N^{0})(\forall g^{1})\big[\overline{f}N=_{0}\overline{g}N\di \varphi(f)=\varphi(g) \big].
\ee
Note that \eqref{CONT1337} is related to C-N as the latter expresses the existence of a modulus-of-continuity functional according to Troelstra (See \cite{troelstra1}*{p.\ 77}).  
In particular, a modulus of (pointwise) continuity can be uniformly converted into an associate (as in $K_{0}$ in \cite{troelstra1}*{p.\ 77}) by the proof of \cite{kohlenbach4}*{Prop.\ 4.4}.  
We assume a version of \eqref{dorg} corresponding to \eqref{CONT1337} has been added to $\RCAO$.  
\begin{thm}
In $\RCAO$, we have 
\[
\eqref{UWC}\asa \eqref{UWC}^{\st}\asa \eqref{WCS}\asa \eqref{WCSS}\asa \eqref{CONT1337}\asa \eqref{CONT1337}^{\st}\asa \eqref{CONT1338}\asa \eqref{CONT1339}.  
\]
\end{thm}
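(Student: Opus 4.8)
The plan is to prove the eight principles pairwise equivalent by running a single cycle of implications, organised exactly as for Theorems~\ref{muck}, \ref{trikkeagain} and \ref{muck8}. The principles fall into two blocks: the \emph{modulus-of-pointwise-continuity block} $\eqref{CONT1337}$, $\eqref{CONT1337}^{\st}$, $\eqref{CONT1338}$, $\eqref{CONT1339}$, which is the Baire-space analogue of the Cantor-space block $\eqref{POC}$, $\eqref{POC}^{\st}$, $\eqref{POCSS}$, $\eqref{POCS}$ of Theorem~\ref{trikkeagain}, and the \emph{weak-continuity block} $\eqref{UWC}$, $\eqref{UWC}^{\st}$, $\eqref{WCS}$, $\eqref{WCSS}$. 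Inside each block the equivalences use the now-familiar toolkit: PF-TP$_{\forall}$ to pass between an internal principle and its $\st$-version whenever the witnessing functional is parameter-free and uniquely determined — this applies to \eqref{UWC}, via its minimality clause (the footnote) together with Remark~\ref{tokkiep}, and to \eqref{CONT1337}, via the \eqref{dorg}-style axiom added to $\RCAO$ just before the theorem, which supplies a standard associate functional and thereby makes the slightly asymmetric condition ``$\overline{f}\Psi(\varphi,f)=\overline{g}\Psi(\varphi,g)$'' behave like a genuine modulus: an initial-segment modulus read off from an associate is automatically \emph{self-consistent}, in that $N=\Psi(\varphi,f)$ and $\overline{g}N=\overline{f}N$ force $\Psi(\varphi,g)=N$ and $\varphi(g)=\varphi(f)$, and this holds for \emph{all} $g$, not just standard ones. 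The remaining tools are HAC$_{\textup{int}}$, to collapse a $(\forall^{\st}\ldots)(\exists^{\st}\ldots)$ into a standard functional, and the weak compactness property \eqref{WCP}, available in $\RCAO$ via the idealisation axiom $\textup{I}$, to pass from ``$\approx_{1}$-continuity'' to ``$\eps$-$\delta$-continuity'', verbatim as in the step $\eqref{POCSS}\di\eqref{POCS}$.

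The two blocks are linked by a pair of direct translations. Given $\varphi^{2}$, let $H(f,n)$ be $0$ when $\varphi(f)=n$ and $1$ otherwise, and put $\psi:=\varphi$; then $(\forall f)(H(f,\psi(f))=0)$ trivially, so applying \eqref{UWC} returns $\Psi(f,\varphi)$ with $\overline{f}\Psi(f,\varphi)(1)=\overline{g}\Psi(f,\varphi)(1)\di\varphi(g)=\Psi(f,\varphi)(2)$; taking $g:=f$ forces $\Psi(f,\varphi)(2)=\varphi(f)$, so $\Psi(\cdot,\cdot)(1)$ is a modulus functional and $\eqref{UWC}\di\eqref{CONT1337}$. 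Conversely, from \eqref{CONT1337} one obtains via the \eqref{dorg}-style axiom an associate-based standard modulus functional; for $H,\psi$ with $(\forall f)(H(f,\psi(f))=0)$, applying this modulus functional to $\psi$ itself and outputting the pair consisting of a modulus of $\psi$ at $f$ and the value $\psi(f)$ witnesses \eqref{UWC}, since $\psi(g)=\psi(f)$ on the relevant neighbourhood gives $H(g,\psi(f))=H(g,\psi(g))=0$. On the weak-continuity side, $\eqref{WCSS}\di\eqref{CONT1338}$ uses the same $H$: for standard $\varphi$ one has $(\forall^{\st}f)(\exists^{\st}n)(H(f,n)=0)$ with witness $\varphi(f)$, so \eqref{WCSS} yields $(\forall^{\st}\varphi,f)(\exists^{\st}x)(\forall g)(f\approx_{1}g\di\varphi(g)=x)$, and $g:=f$ forces $x=\varphi(f)$. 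The remaining edges of the cycle are the trivial weakenings $\eqref{UWC}\di\eqref{UWC}^{\st}$, $\eqref{CONT1337}\di\eqref{CONT1337}^{\st}$, $\eqref{WCS}\di\eqref{WCSS}$ (using $f\approx_{1}g\di\overline{f}m=\overline{g}m$ for standard $m$) and $\eqref{CONT1339}\di\eqref{CONT1338}$, the implication $\eqref{CONT1337}\di\eqref{CONT1338}$ (from the internal all-$g$ version plus self-consistency, so \eqref{CONT1337}$^{\st}$ is lifted to \eqref{CONT1337} by PF-TP$_{\forall}$ first), and $\eqref{CONT1339}\di\eqref{CONT1337}^{\st}$, obtained exactly as $\eqref{POCS}\di\eqref{POC}^{\st}$ by feeding the $\st$-bounded modulus of \eqref{CONT1339} into HAC$_{\textup{int}}$ and then into the \eqref{dorg}-style axiom, which closes everything up.

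The step I expect to be the real obstacle is $\eqref{CONT1339}\di\eqref{WCS}$: one must produce the weak-continuity conclusion whose inner universal quantifier ``$(\forall g)$'' ranges over \emph{all} functions, while the nonstandard hypothesis $(\forall^{\st}f)(\exists^{\st}n)(H(f,n)=0)$ only controls standard $f$, and no Transfer is available in $\RCAO$ to bridge this. The resolution — and this is the conceptual point of the section — is not to attempt the transfer but to apply continuity to a \emph{composite}. Given standard $H$ with that hypothesis, HAC$_{\textup{int}}$ yields a standard choice function $\psi$ with $(\forall^{\st}f)(H(f,\psi(f))=0)$; now apply \eqref{CONT1339} both to $\psi$ and to the standard functional $\varphi^{*}(g):=H(g,\psi(g))$. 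For standard $f$, continuity of $\varphi^{*}$ gives a standard $N^{*}$ with $\overline{f}N^{*}=\overline{g}N^{*}\di\varphi^{*}(g)=\varphi^{*}(f)=0$ for every $g$, and continuity of $\psi$ gives a standard $N^{\psi}$ with $\overline{f}N^{\psi}=\overline{g}N^{\psi}\di\psi(g)=\psi(f)$ for every $g$; setting $m:=\max(N^{*},N^{\psi})$ and $x:=\psi(f)$ one reads off $(\forall g)(\overline{f}m=\overline{g}m\di H(g,x)=0)$, which is the conclusion of \eqref{WCS}. The same ``apply continuity to the composite'' device, combined with \eqref{WCP}, also handles $\eqref{WCSS}\di\eqref{UWC}^{\st}$ should a different routing of the cycle be preferred. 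Apart from this, the only care needed is the bookkeeping around the asymmetric modulus condition in \eqref{CONT1337}, which is precisely why the \eqref{dorg}-style axiom was added; and one checks, exactly as for Theorem~\ref{trikkeagain}, that no instance of QF-AC$^{2,0}$ — hence no appeal to Theorem~\ref{halleh} — is needed, the requisite standard extensionality being a free consequence of \eqref{CONT1337} once the latter is available.
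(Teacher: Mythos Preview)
Your proposal is correct and follows essentially the same approach as the paper, which is itself extremely terse: the paper simply declares that the block-internal equivalences are ``proved similarly to the proofs of the previous theorems'' and only spells out the bridge $\eqref{WCS}\asa\eqref{CONT1339}$, using the same $H(f,n)=0\Leftrightarrow\varphi(f)=n$ trick you employ. Your treatment of $\eqref{CONT1339}\to\eqref{WCS}$ is in fact more careful than the paper's one-line ``apply \eqref{CONT1339} to $H(\cdot,g(\cdot))$'': that application alone only yields $H(h,\psi(h))=0$ on a neighbourhood of $f$, not $H(h,x)=0$ for a \emph{fixed} standard $x$, so your additional application of \eqref{CONT1339} to $\psi$ itself is genuinely needed to close the argument; a marginally shorter alternative is, for each standard $f$, to apply \eqref{CONT1339} once to the standard functional $H(\cdot,\psi(f))$, which gives $(\forall g)(\overline{f}m=\overline{g}m\to H(g,\psi(f))=H(f,\psi(f))=0)$ directly.
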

\begin{proof}
First of all, the equivalence between \eqref{WCS} and \eqref{WCSS} (and \eqref{CONT1338} and \eqref{CONT1339}) is proved as for \eqref{POCS} and \eqref{POCSS} in the previous proof.   
In general, the first three and the last three equivalences in the theorem are proved similarly to the proofs of the previous theorems.  We shall only establish the remaining equivalence.  
To prove that $\eqref{WCS}\di \eqref{CONT1339}$, apply the former to $(\forall^{\st}f^{1})(\exists^{\st}n)(\varphi(f)=n)$ for standard $\varphi^{2}$.  
The reverse implication follows by applying the \eqref{CONT1339} to $H(\cdot,g(\cdot))$.  
\end{proof}
In light of the above results, nonstandard continuity may be qualified as `standard continuity with a modulus'.    
\section{Reverse Mathematics of Brouwer's continuity theorem}\label{strongEMT}
In this section, we use the above results to obtain the Reverse Mathematics classification of Brouwer's continuity theorem, assuming (weakenings of) \eqref{POC}.   
In light of \cite{kohlenbach4}*{Prop.\ 4.8-4.9}, this assumption seems to be rather weak.  As argued in Remark \ref{bridgestone}, the assumption \eqref{POC} seems essential to connect uniform and non-uniform intuitionistic principles.  We also obtain some natural splitting results for the fan functional in the next section.      
\subsection{The fan theorems}\label{fans}
In this section, we prove preliminary results involving the fan theorem as a step towards classifying Brouwer's continuity theorem.  
Certain results are interesting in their own right, as we obtain a `splitting' of the fan functional into various pairs of equally natural principles.  
As discussed in \cites{montahue, schirfeld}, such splitting results are sought after in Reverse Mathematics.  

\medskip

First of all, in \cite{rollander}*{Theorem 4.16} and \cite{brich}*{Theorem 5.3.2-3}, the equivalence between 
the uniform continuity principle \textbf{UC} and the fan theorem is proved, assuming that all type 2-objects are (pointwise) continuous as in \textbf{CC}.  Our version of this result is the following corollary to Theorem~\ref{trikkeagain}.
Recall the princple UFAN$_{2}$, i.e.\ the uniform version of the fan theorem from Section~\ref{qfffan}.  
\begin{thm}\label{seealso1}
In $\RCAo$, we have $[\UFAN_{2}+\eqref{POC}]\asa \eqref{MUC}$.  The same equivalence holds relative to `\st' in $\RCAO$.    
\end{thm}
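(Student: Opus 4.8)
The plan is to prove the two directions separately; essentially all the content is in the forward implication $[\UFAN_{2}+\eqref{POC}]\di\eqref{MUC}$. For the converse, that $\eqref{MUC}$ implies $\UFAN_{2}$ is recorded in the discussion of $\UFAN_{2}$ above (cf.\ \cite{firstHORM}*{\S5}), and $\eqref{MUC}\di\eqref{POC}$ is immediate, since a fan functional $\Omega$ furnishes a pointwise modulus via $\Delta(\varphi,f):=\Omega(\varphi)$. So I would concentrate entirely on the forward implication.

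For $[\UFAN_{2}+\eqref{POC}]\di\eqref{MUC}$, fix the modulus functional $\Delta^{3}$ from $\eqref{POC}$ and the bounding functional $\Phi^{3}$ from $\UFAN_{2}$, and let $\varphi^{2}$ be given. First I would pass from the pointwise modulus to an \emph{associate}: since $f\mapsto\Delta(\varphi,f)$ is a modulus of pointwise continuity of $\varphi$ on Cantor space, the functional $\Xi$ of Remark~\ref{tokkier} (the construction from the proof of \cite{kohlenbach4}*{Prop.\ 4.4}) yields, uniformly in $\varphi$, an associate $\gamma_{\varphi}:=\Xi(\varphi,\Delta(\varphi,\cdot))$ for $\varphi$ restricted to Cantor space: for every $\alpha\leq_{1}1$ there is $n$ with $\gamma_{\varphi}(\overline{\alpha}n)>0$, and $\gamma_{\varphi}(\overline{\alpha}n)=\varphi(\alpha)+1$ for the least such $n$. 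Next I would feed $\UFAN_{2}$ the tree $T_{\varphi}:=\{\sigma\in 2^{<\omega}:(\forall\tau\preceq\sigma)(\gamma_{\varphi}(\tau)=0)\}$ (so $T_{\varphi}\leq_{1}1$) together with the total functional $g_{\varphi}^{2}$ defined, using $\QFAC^{1,0}$ (available in $\RCAo$), by $g_{\varphi}(\alpha):=\mu n\,[\,\gamma_{\varphi}(\overline{\alpha}n)>0\ \vee\ \alpha(n)>1\,]$. By the associate property, for binary $\alpha$ the number $g_{\varphi}(\alpha)$ is precisely the level at which $\overline{\alpha}(\cdot)$ exits $T_{\varphi}$, so the antecedent $(\forall\alpha\leq_{1}1)(\overline{\alpha}g_{\varphi}(\alpha)\notin T_{\varphi})$ of $\UFAN_{2}$ holds; hence $N:=\Phi(g_{\varphi},T_{\varphi})$ satisfies $(\forall\alpha\leq_{1}1)(g_{\varphi}(\alpha)\leq N)$. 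Finally, if $f,g\leq_{1}1$ and $\overline{f}N=\overline{g}N$, then $g_{\varphi}(f)\leq N$ forces $\overline{f}g_{\varphi}(f)=\overline{g}g_{\varphi}(f)$ and $g_{\varphi}(g)=g_{\varphi}(f)$, whence $\varphi(f)+1=\gamma_{\varphi}(\overline{f}g_{\varphi}(f))=\gamma_{\varphi}(\overline{g}g_{\varphi}(f))=\varphi(g)+1$. Thus $\Omega(\varphi):=\Phi(g_{\varphi},T_{\varphi})$ is a fan functional, defined uniformly from $\Delta,\Xi,\Phi$, and $\eqref{MUC}$ follows. The step I expect to be the real obstacle is exactly this detour through associates: one cannot build $T_{\varphi}$ directly from $\Delta$, because a raw pointwise modulus functional may be arbitrarily non-monotone — the value $\Delta(\varphi,\overline{\alpha}n*00\dots)$ can grow faster than $n$ along a branch $\alpha$ — so the naive ``deciding-node'' tree need not be barred and $\UFAN_{2}$ would not apply; converting $\varphi$ to an associate is what regularises this, precisely the phenomenon flagged in Remark~\ref{bridgestone}. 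Everything else is routine bookkeeping over $\RCAo$.

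For the version relative to $\st$ in $\RCAO$, the implications $\eqref{MUC}^{\st}\di(\UFAN_{2})^{\st}$ and $\eqref{MUC}^{\st}\di\eqref{POC}^{\st}$ follow by relativising the internal implications above. For the forward implication I would rerun the construction with standard witnesses: by Theorem~\ref{trikkeagain}, $\eqref{POC}^{\st}$ yields a standard $\Delta$; the associate-producing functional $\Xi$ of Remark~\ref{tokkier} is standard, so $\gamma_{\varphi}$, $T_{\varphi}$ and $g_{\varphi}$ are standard for standard $\varphi$; the antecedent of $\UFAN_{2}$ is an internal statement holding outright, so $(\UFAN_{2})^{\st}$ applies and gives a standard bound $\Omega(\varphi)$. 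Since $\eqref{MUC}^{\st}$ only requires the conclusion for \emph{standard} $f,g$, the final computation goes through unchanged, producing a standard $\Omega$ witnessing $\eqref{MUC}^{\st}$; Theorem~\ref{muck} (giving $\eqref{MUC}^{\st}\asa\eqref{MUC}$) may be invoked to match up the internal and standard forms.
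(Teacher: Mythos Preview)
Your forward argument via associates is essentially the proof the paper gives for Theorem~\ref{seealso999} (the $\FAN$ version), not for this theorem. The paper's own proof here is more direct: it applies $\Delta$ to the functional $H:=\Delta(\varphi,\cdot)$ itself, setting $G(\beta):=\Delta(H,\beta)$, and observes that the tree $T_{0}:=\{\sigma\leq_{0^{*}}1:(\forall\tau\preceq\sigma)\,\Delta(\varphi,\tau*00\dots)>|\tau|\}$ is barred with the \emph{explicitly given} witness $\beta\mapsto\max(G(\beta),H(\beta))$. In other words, $\Delta$ witnesses the continuity of $\Delta(\varphi,\cdot)$, and this self-application manufactures the bar realiser without any search. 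Your claim that ``one cannot build $T_{\varphi}$ directly from $\Delta$'' is therefore too pessimistic: the naive deciding-node tree \emph{is} barred, and the bar realiser is a closed term in $\Delta,\varphi$.

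This difference is not merely cosmetic. Your definition $g_{\varphi}(\alpha):=\mu n[\gamma_{\varphi}(\overline{\alpha}n)>0\vee\alpha(n)>1]$ via $\QFAC^{1,0}$ produces, for each fixed $\varphi$, a type-$2$ functional $g_{\varphi}$; but $\QFAC^{1,0}$ does not give you the map $\varphi\mapsto g_{\varphi}$ as a type-$3$ object, which is what you need to define $\Omega(\varphi):=\Phi(g_{\varphi},T_{\varphi})$ uniformly. To get that uniformity by choice you would need $\QFAC^{2,0}$, which is exactly why the paper states Theorem~\ref{seealso999} over $\RCAo+\QFAC^{2,0}$ while the present theorem is over plain $\RCAo$. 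The paper's self-application trick avoids this entirely, since the bar realiser $\max(\Delta(\Delta(\varphi,\cdot),\beta),\Delta(\varphi,\beta))$ is already a term in $\Delta,\varphi,\beta$. Your argument can be repaired along the same lines (an explicit bound for your $\mu$ is available from the structure of Kohlenbach's associate construction, and it is essentially the same term), but as written the uniformity claim is a gap. The $\st$-relativised part and the reverse direction are fine.
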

\begin{proof}
The reverse direction is immediate by defining the functional $\Phi^{3}$ as $\Phi(g):=\max_{|\gamma|=\Omega(g)\wedge |\gamma|\leq_{0^{*}}1}g(\gamma*00\dots)$.  For the forward direction, fix $\varphi^{2}$ and consider the functional $\Delta$ from 
\eqref{POC}.  Then $H(\cdot):=\Delta(\varphi, \cdot)$ is also a type 2-object and consider $G(\alpha):=\Delta(H, \alpha)$.  In other words, $\Delta$ witnesses its own continuity.  
Now, in order to apply the uniform fan theorem, we have by \eqref{POC} that $(\forall \beta^{1}\leq_{1}1)(\exists n^{0})\big[\Delta(\varphi, \overline{\beta}n)\leq n\big]$ as in particular 
$(\forall \beta^{1}\leq_{1}1)\big[\Delta(\varphi, \overline{\beta}G(\beta))\leq G(\beta)\big]$.  By UFAN$_{2}$, we have $(\forall \beta^{1}\leq_{1}1)(\exists n \leq \Phi(G, T_{0}))\big[\Delta(\varphi, \overline{\beta}n)\leq n\big]$, where the tree $T_{0}$ has an obvious definition.   
Hence, if $\overline{\alpha}\Phi(G,T_{0})=\overline{\beta}\Phi(G,T_{0})$, then there is $n, m\leq \Phi(G, T_{0})$ such that $\Delta(\varphi, \overline{\beta}n)\leq n, \Delta(\varphi, \overline{\alpha}m)\leq m$.  
But then $\varphi(\alpha)=\varphi(\overline{\alpha}m*00)=\varphi(\overline{\beta}n*00)=\varphi(\beta)$ by \eqref{POC}. 
The above holds relative to `st'.  
\end{proof}
The previous theorem suggests that \eqref{POC} is the right assumption to connect the (classically acceptable by \cite{firstHORM}*{\S5}) uniform fan theorem and the (intuitionistic) fan functional.  
Perhaps surprisingly, the principle \eqref{POC} also yields equivalence between the `non-uniform' fan theorem and the uniform fan theorem as in Corollary~\ref{teks}.   
We first prove Theorem \ref{seealso999}, for which we need the following definition.
\bdefi[See \cite{kohlenbach4}*{Def.\ 4.3}]\label{kodef}
For a pointwise continuous functional $\Phi^{2}$, the sequence $\alpha^{1}$ is an \emph{associate} for $\Phi$, if they satisfy the following:
\begin{align}
(\forall f^{1})&(\exists n^{0})(\alpha(\overline{f}n)>0)\wedge \notag\\ 
&(\forall f^{1},n^{0})\big[\alpha(\overline{f}n)>0 \wedge (\forall k<n)(\alpha(\overline{f}k)=0)\di \alpha(\overline{f}n)=\Phi(f)+1 \big]. \label{myassociate}
\end{align}

\edefi

\begin{thm}\label{seealso999}
In $\RCAO$, we have $[\FAN^{\st}+\eqref{POC}]\asa \eqref{MUC}$.\\  
In $\RCAo+\QFAC^{2,0}$, we have $ [\FAN +\eqref{POC}]\asa \eqref{MUC}$. 
\end{thm}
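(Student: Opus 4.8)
The plan is to prove both implications of each equivalence; the substance lies in $[\FAN^{\st}+\eqref{POC}]\di\eqref{MUC}$ (and its internal analogue over $\RCAo+\QFAC^{2,0}$), while the converses are short and essentially already contained in Theorems~\ref{muck} and~\ref{seealso1}.

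For the forward direction over $\RCAO$, I would fix a standard $\varphi^{2}$ and, invoking Remark~\ref{tokkier} (which treats precisely the principle $\eqref{POC}$, together with \cite{kohlenbach4}*{Prop.\ 4.4}), extract from $\eqref{POC}$ a \emph{standard} modulus-of-pointwise-continuity functional $\Delta$ and the standard functional $\Xi$ of that proposition, so that $\alpha_{\varphi}:=\Xi(\varphi,\Delta(\varphi,\cdot))$ is a standard associate of $\varphi$ in the sense of Definition~\ref{kodef}; since $\eqref{POC}$ is internal, $\Delta(\varphi,\cdot)$ is a genuine modulus on all of Cantor space, so $\alpha_{\varphi}$ genuinely satisfies \eqref{myassociate} for every $f\leq_{1}1$. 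Next I would form the standard binary tree $T:=\{\sigma\in 2^{<\omega}:(\forall k\leq|\sigma|)(\alpha_{\varphi}(\overline{\sigma}k)=0)\}$; it is downward closed, and it has no path, since a path $f\leq_{1}1$ would give $(\forall k)(\alpha_{\varphi}(\overline{f}k)=0)$, contradicting $(\exists n)(\alpha_{\varphi}(\overline{f}n)>0)$. Hence $(\forall^{\st}f\leq_{1}1)(\exists^{\st}n)(\overline{f}n\notin T)$ (for standard $f$ and standard $T$ the least such $n$ is standard), so $\FAN^{\st}$ applied to $T$ yields a standard $N$ with no binary sequence of length $N$ in $T$.

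I would then verify that this $N$ is a modulus of uniform continuity of $\varphi$ on Cantor space: for any $f\leq_{1}1$ the least $k_{0}$ with $\alpha_{\varphi}(\overline{f}k_{0})>0$ satisfies $k_{0}\leq N$ (as $\overline{f}N\notin T$), so by \eqref{myassociate} $\varphi(f)+1=\alpha_{\varphi}(\overline{f}k_{0})$ depends only on $\overline{f}N$, whence $\overline{f}N=\overline{g}N$ implies $\varphi(f)=\varphi(g)$. As $\varphi$ was an arbitrary standard functional of type $2$ and $N$ is standard, this establishes $\eqref{MUC2}$, and $\eqref{MUC}$ follows by Theorem~\ref{muck}. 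For the $\RCAo+\QFAC^{2,0}$ variant I would run the identical construction internally, with $\FAN$ replacing $\FAN^{\st}$ and no standardness bookkeeping, obtaining $(\forall\varphi^{2})(\exists N)[\ldots]$; here $\QFAC^{2,0}$ is exactly what supplies the functional $\Omega$ demanded by $\eqref{MUC}$.

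For the converses, $\eqref{MUC}\di\eqref{POC}$ is immediate by taking $\Delta(\varphi,f):=\Omega(\varphi)$. To get $\FAN^{\st}$ from $\eqref{MUC}$: given a standard binary tree $T$ with $(\forall^{\st}f\leq_{1}1)(\exists^{\st}n)(\overline{f}n\notin T)$, I would apply $\QFAC^{1,0}$ relative to `$\st$' (available via $\textup{HAC}_{\textup{int}}$) to obtain a standard $g^{2}$ with $(\forall^{\st}f\leq_{1}1)(\overline{f}g(f)\notin T)$, use the standard uniform-continuity modulus $N$ of $g$ furnished by $\eqref{MUC2}$ to bound $g$ on Cantor space by the standard number $M:=\max_{|\sigma|=N\wedge\sigma\leq_{0^{*}}1}g(\sigma*00\dots)$, and conclude that every binary $\sigma$ with $|\sigma|=M$ has the initial segment $\overline{\sigma}\,g(\sigma*00\dots)\notin T$, hence $\sigma\notin T$; the internal version $\eqref{MUC}\di\FAN$ over $\RCAo+\QFAC^{2,0}$ is the same argument without `$\st$' (and is in any case the classical implication $\WKL\di\FAN$). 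The point requiring care — and the main obstacle — is the standardness bookkeeping: on the one hand making $\Delta$, $\Xi$, $\alpha_{\varphi}$ and $T$ standard so that $\FAN^{\st}$ is applicable, and on the other hand observing that in the converse one cannot define the required type-$2$ modulus by the unbounded search $\varphi(f):=\mu n\,[\overline{f}n\notin T]$ — it need not be total on nonstandard paths of $T$ — but must instead produce it as a genuine, total choice functional via $\textup{HAC}_{\textup{int}}$.
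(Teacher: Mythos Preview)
Your proof is correct and follows essentially the same approach as the paper's: both arguments extract a standard associate $\alpha_{\varphi}$ from $\eqref{POC}$ via \cite{kohlenbach4}*{Prop.\ 4.4} (the paper writes this as a functional $\Psi^{2\to 1}$), apply $\FAN^{\st}$ to the ``associate has not yet fired'' condition to obtain a standard uniform bound $N$, observe that this bound transfers from standard to all $\beta\leq_{1}1$ because $\overline{\beta}N$ is standard for standard $N$, and then read off a modulus of uniform continuity; for the internal version both use $\QFAC^{2,0}$ to supply the fan functional. The only cosmetic differences are that you package the bar condition as an explicit tree $T$ whereas the paper applies $\FAN^{\st}$ directly to the formula, and that you conclude via $\eqref{MUC2}$ and Theorem~\ref{muck} whereas the paper applies $\textup{HAC}_{\textup{int}}$ to \eqref{koel} to build the fan functional outright; your treatment of the converse direction is also more detailed than the paper's, which leaves it implicit.
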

\begin{proof}
For the first forward implication, by the proof of \cite{kohlenbach4}*{Prop.\ 4.4}, \eqref{POC} yields a (standard) functional $\Psi^{2\di 1}$ such that $\Psi(\varphi)$ is an associate of $\varphi^{2}$.  The first conjunct  of the definition of associate, namely \eqref{myassociate}, yields $(\forall^{\st} \varphi^{2})\big[(\forall^{\st} \beta^{1}\leq_{1}1)(\exists^{\st}k^{0})\Psi(\varphi)(\overline{\beta}k)>0\big]$ thanks to \eqref{POC}.    
Since the atomic formula in the former formula only takes $\overline{\beta}k$ as argument, we may apply FAN$^{\st}$ to obtain 
\[
(\forall^{\st} \varphi^{2})(\exists^{\st}n^{0})\big[(\forall^{\st} \beta^{1}\leq_{1}1)(\exists k^{0}\leq n)\Psi(\varphi)(\overline{\beta}k)>0\big], 
\]
which trivially implies 
\be\label{koel}
(\forall^{\st} \varphi^{2})(\exists^{\st}n^{0})\big[(\forall \beta^{1}\leq_{1}1)(\exists k^{0}\leq n)\Psi(\varphi)(\overline{\beta}k)>0\big].  
\ee
Now apply HAC$_{\textup{int}}$ to the previous formula to obtain standard $\Phi^{3}$ such that 
\[
(\forall^{\st} \varphi^{2})(\exists n^{0}\in \Phi(\varphi))\big[(\forall \beta^{1}\leq_{1}1)    (\exists k^{0}\leq n)\Psi(\varphi)(\overline{\beta}k)>0\big].  
\]
As usual, define $\Theta(\varphi)$ as $\max_{i<|\Phi(\varphi)|}\Phi(\varphi)(i)$;  By the second component of the definition of associate, $\Theta$ is exactly the fan functional (relative to `st').  
By Theorem~\ref{muck}, the first equivalence now follows.  
For the second forward implication, one obtains \eqref{koel} without `st' in much the same way.  This formula immediately implies:
\[
(\forall \varphi^{2})(\exists n^{0})\big[(\forall \beta^{0}\leq_{0^{*}}1)[~|\beta|=n\di (\exists k^{0}\leq n)\Psi(\varphi)(\overline{\beta}k)>0~]\big].  
\]
Now apply $\QFAC^{2,0}$ to again obtain the fan functional, and we are done.  
\end{proof}
\begin{cor}\label{teks}
In $\RCAO+\eqref{POC}$, we have $\FAN^{\st}\asa \UFAN_{2}^{\st}$.  The internal equivalence holds over $\RCAo+\QFAC^{2,0}+\eqref{POC}$.    
\end{cor}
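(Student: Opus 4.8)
The plan is to obtain Corollary~\ref{teks} as a direct concatenation of Theorems~\ref{seealso1} and~\ref{seealso999} through the fan functional~\eqref{MUC}, with no new argument needed. First I would record that, by Theorem~\ref{trikkeagain}, the principle~\eqref{POC} is equivalent over $\RCAO$ to its starred version \eqref{POC}$^{\st}$, so that working over $\RCAO+\eqref{POC}$ makes both forms freely available; the analogous internal manipulations are available over $\RCAo+\eqref{POC}$.

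Next, I would read off the required implications from the two cited theorems. Theorem~\ref{seealso999} gives $[\FAN^{\st}+\eqref{POC}]\asa\eqref{MUC}$ over $\RCAO$, which over the enlarged base theory $\RCAO+\eqref{POC}$ collapses to $\FAN^{\st}\asa\eqref{MUC}$. The `st'-relative form of Theorem~\ref{seealso1} gives $[\UFAN_{2}^{\st}+\eqref{POC}^{\st}]\asa\eqref{MUC}^{\st}$ over $\RCAO$, which — using the equivalence $\eqref{POC}\asa\eqref{POC}^{\st}$ from the first step — collapses over $\RCAO+\eqref{POC}$ to $\UFAN_{2}^{\st}\asa\eqref{MUC}^{\st}$. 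Finally, Theorem~\ref{muck} supplies $\eqref{MUC}\asa\eqref{MUC}^{\st}$. Chaining these three equivalences yields
\[
\FAN^{\st}\asa\eqref{MUC}\asa\eqref{MUC}^{\st}\asa\UFAN_{2}^{\st}
\]
over $\RCAO+\eqref{POC}$, which is the asserted equivalence.

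For the internal version I would run the same argument with the non-`st' halves of the two theorems: Theorem~\ref{seealso1} gives $[\UFAN_{2}+\eqref{POC}]\asa\eqref{MUC}$ over $\RCAo$, and Theorem~\ref{seealso999} gives $[\FAN+\eqref{POC}]\asa\eqref{MUC}$ over $\RCAo+\QFAC^{2,0}$; over $\RCAo+\QFAC^{2,0}+\eqref{POC}$ these become $\UFAN_{2}\asa\eqref{MUC}$ and $\FAN\asa\eqref{MUC}$, and chaining yields $\FAN\asa\UFAN_{2}$. I do not expect any genuine obstacle: the only point requiring care is the bookkeeping of starred versus internal forms of~\eqref{MUC} and~\eqref{POC} — handled by Theorems~\ref{muck} and~\ref{trikkeagain} — together with the observation that, once~\eqref{POC} is absorbed into the base theory, each cited biconditional collapses to a clean equivalence with~\eqref{MUC}, so that no circularity involving the continuity hypothesis can creep in.
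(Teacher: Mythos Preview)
Your proposal is correct and is essentially the same approach as the paper: the corollary is stated without proof immediately after Theorem~\ref{seealso999}, and is meant to follow by chaining Theorems~\ref{seealso1} and~\ref{seealso999} through~\eqref{MUC}, exactly as you do. The extra bookkeeping you supply via Theorems~\ref{muck} and~\ref{trikkeagain} (to align $\eqref{MUC}$ with $\eqref{MUC}^{\st}$ and $\eqref{POC}$ with $\eqref{POC}^{\st}$) is more detail than the paper bothers to spell out, but it is the right way to make the chain airtight.
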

The previous theorem expresses that the fan functional can be decomposed as the fan theorem and an intuitionistic uniform continuity principle.  
We now provide an alternative decomposition into the \emph{quantifier-free} fan theorem and a classical uniform continuity principle.  
\begin{cor}\label{seealso2}
In $\RCAo+\QFAC^{2,0}$, we have $\big[\textup{QF-FAN}+\eqref{POC}_{0}\big]\asa \eqref{MUC}$. 
\end{cor}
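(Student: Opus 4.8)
The plan is to deduce Corollary~\ref{seealso2} from Theorem~\ref{seealso999}, after using $\textup{QF-FAN}$ to upgrade the restricted modulus principle $\eqref{POC}_{0}$ to the unrestricted $\eqref{POC}$. The implication $\eqref{MUC}\di[\textup{QF-FAN}+\eqref{POC}_{0}]$ I would dispatch first, as it is immediate: setting $\Delta(\varphi,f):=\Omega(\varphi)$ for $\Omega$ as in $\eqref{MUC}$ yields $\eqref{POC}$, hence a fortiori $\eqref{POC}_{0}$; and $\eqref{MUC}$ trivially implies $\eqref{dikjui}$ (take $N:=\Omega(\varphi)$), which in turn gives $\textup{QF-FAN}$ by the internal equivalence of Corollary~\ref{tothecenter}.

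For the forward direction, the key observation is that $\textup{QF-FAN}$ on its own forces \emph{every} type-$2$ functional into the class $C$. This is exactly the content of the internal equivalence $\textup{QF-FAN}\asa\eqref{dikjui}$ from Corollary~\ref{tothecenter}: the principle $\eqref{dikjui}$ asserts that every $\varphi^{2}$ possesses a modulus of \emph{uniform} continuity on Cantor space, which in particular makes every $\varphi^{2}$ pointwise continuous on Cantor space, i.e.\ $(\forall\varphi^{2})(\varphi^{2}\in C)$. (If one wishes to avoid citing Corollary~\ref{tothecenter}, the same conclusion is obtained directly by applying $\textup{QF-FAN}$ to the quantifier-free formula $A_{0}(h,n):\equiv[\overline{(h)_{0}}n=_{0}\overline{(h)_{1}}n\di\varphi((h)_{0})=_{0}\varphi((h)_{1})]$, where $h^{1}\leq_{1}1$ codes a pair of binary sequences; its antecedent holds trivially — take $n$ past the first place where the two components differ, or $n=0$ in the diagonal case using extensionality, available in $\RCAo$ — and its consequent is precisely a uniform modulus of continuity for $\varphi^{2}$ on Cantor space.)

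Once $(\forall\varphi^{2})(\varphi^{2}\in C)$ is in hand, the restriction `$\varphi^{2}\in C$' on the leading quantifier of $\eqref{POC}_{0}$ becomes vacuous, so $\eqref{POC}_{0}$ is literally $\eqref{POC}$. Moreover $\textup{QF-FAN}$ implies the fan theorem $\FAN$, the latter being the instance of $\textup{QF-FAN}$ for the quantifier-free formula `$\overline{\alpha}n\notin T$'. Hence $[\FAN+\eqref{POC}]$ holds over $\RCAo+\QFAC^{2,0}$, and Theorem~\ref{seealso999} now delivers $\eqref{MUC}$, which closes the equivalence.

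I expect the main obstacle to be conceptual rather than computational: one must notice that in the higher-type setting the quantifier-free formulas admitted by $\textup{QF-FAN}$ may carry type-$2$ parameters, so that $\textup{QF-FAN}$ already continuises all of type $2$ — this is exactly what renders the restricted principle $\eqref{POC}_{0}$ interchangeable with $\eqref{POC}$ in the present argument. The rest is bookkeeping together with the appeal to Theorem~\ref{seealso999}, whose proof already supplies the $\QFAC^{2,0}$-based passage from $\eqref{POC}+\FAN$ to $\eqref{MUC}$ via Kohlenbach's associate functional and the fan theorem.
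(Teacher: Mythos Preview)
Your proof is correct and follows essentially the same route as the paper: use Corollary~\ref{tothecenter} (or your direct application of \textup{QF-FAN} to the extensionality instance) to see that \textup{QF-FAN} forces every $\varphi^{2}$ into $C$, so that $\eqref{POC}_{0}$ collapses to $\eqref{POC}$; then combine $\textup{QF-FAN}\di\FAN$ with Theorem~\ref{seealso999}. Your citation of Theorem~\ref{seealso999} for the final step is in fact the cleaner reference here.
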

\begin{proof}
We only need to prove the forward implication.  By Corollary \ref{tothecenter}, QF-FAN implies that every type 2-functional is continuous and \eqref{POC} follows from \eqref{POC}$_{0}$.  
As QF-FAN implies FAN, the theorem now follows from Corollary \ref{teks}.  
\end{proof}
A natural question is whether e.g.\ $\UFAN_{2}$ plus a non-uniform version of \eqref{POC} is also equivalent to the fan functional.  
We can interpret the previous corollary as yielding $\eqref{MUC}\asa \eqref{dikjui}$, assuming \eqref{POC}$_{0}$ and $\QFAC^{2,0}$ (See Corollary~\ref{tothecenter}).  In other words, thanks to the latter princples, we may freely replace the existential quantifier in \eqref{dikjui} by a functional, along the lines of the 
central feature of Explicit Mathematics, namely that a proof of existence of an object yields a procedure to compute said object.
The following corollary expresses these results.  
\begin{cor}\label{seealso3}
In $\RCAo+\eqref{POC}+\QFAC^{2,0}$, we have $\FAN \asa \UFAN_{2}\asa \textup{QF-FAN}\asa \eqref{qffan}\asa \eqref{MUC}$.
The same equivalences hold relative to `\st'.  
\end{cor}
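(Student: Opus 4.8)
The plan is to route all five principles through the single hub \eqref{MUC}, proving that each of $\FAN$, $\UFAN_{2}$, $\textup{QF-FAN}$ and \eqref{qffan} is equivalent to \eqref{MUC} over $\RCAo+\eqref{POC}+\QFAC^{2,0}$; the displayed chain then follows at once. The implications \emph{from} \eqref{MUC} are the cheap ones and need no extra hypotheses: $\eqref{MUC}$ proves $\WKL$ (see \cite{kohlenbach2}*{Prop.\ 3.13}), hence its classical contraposition $\FAN$; $\eqref{MUC}\di\UFAN_{2}$ is recalled in Section~\ref{qfffan}; the functional $\Phi(g):=\max_{|\alpha^{0}|=\Omega(g)\wedge\alpha^{0}\leq_{0^{*}}1}g(\alpha*00\dots)$ witnesses \eqref{qffan}, since $\Omega(g)$ is a modulus of uniform continuity for $g$ on Cantor space (this is precisely the opening move of the proof of Theorem~\ref{muck3}); and \eqref{qffan} trivially implies $\textup{QF-FAN}$.

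For the implications \emph{towards} \eqref{MUC} I would lean on the heavy lifting already in place. The implication $\FAN\di\eqref{MUC}$ is the second half of Theorem~\ref{seealso999}, and this is the one spot where $\eqref{POC}$ and $\QFAC^{2,0}$ are genuinely consumed; the implication $\textup{QF-FAN}\di\eqref{MUC}$ is Corollary~\ref{seealso2}, using that $\eqref{POC}_{0}$ follows trivially from $\eqref{POC}$. It then remains to note $\UFAN_{2}\di\FAN$ and $\eqref{qffan}\di\textup{QF-FAN}$: in each case one applies the $\QFAC^{1,0}$ already contained in $\RCAo$ to a witness of the relevant antecedent --- for \eqref{qffan} one first replaces the universally quantified function argument by its pointwise minimum with $1$, so that $\QFAC^{1,0}$ applies to a total type-$1$ object --- and then reads off the uniform bound supplied by the uniform principle. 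Alternatively, $\FAN\asa\UFAN_{2}$ is immediate from Corollary~\ref{teks}. Assembling these arrows yields $\FAN\asa\UFAN_{2}\asa\textup{QF-FAN}\asa\eqref{qffan}\asa\eqref{MUC}$.

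The `st'-relativised statement runs along the same skeleton, now invoking the first half of Theorem~\ref{seealso999} ($[\FAN^{\st}+\eqref{POC}]\asa\eqref{MUC}$), Corollary~\ref{teks} ($\FAN^{\st}\asa\UFAN_{2}^{\st}$), Theorem~\ref{muck3} ($\eqref{MUC}\asa\eqref{qffan}^{\st}\asa\eqref{qffan}$) and Theorem~\ref{muck} ($\eqref{MUC}^{\st}\asa\eqref{MUC}$), together with the cheap implications $\eqref{qffan}^{\st}\di\textup{QF-FAN}^{\st}$ (via the relativised $\QFACP$, which follows from $\textup{HAC}_{\textup{int}}$), $\textup{QF-FAN}^{\st}\di\FAN^{\st}$ (specialise the quantifier-free matrix to tree-membership), and the relativisation of $\eqref{MUC}\di\textup{QF-FAN}$; one may instead obtain $\textup{QF-FAN}^{\st}\di\eqref{MUC}$ from Corollary~\ref{tothecenter}, which identifies $\textup{QF-FAN}^{\st}$ with $\eqref{dikjui}^{\st}$, a reformulation of $\eqref{cruxsks}$, followed by Theorem~\ref{muck}.

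Since the corollary is in essence an assembly of results already established, I do not expect a genuine obstacle. The one place demanding care is carrying the minimality clauses `$\Phi\in\mathfrak{L}$' in $\UFAN_{2}$ and \eqref{qffan} through the `uniform implies non-uniform' steps, and checking that those steps really do go through with only the $\QFAC^{1,0}$ built into $\RCAo$, so that $\QFAC^{2,0}$ is consumed solely where Theorem~\ref{seealso999} needs it.
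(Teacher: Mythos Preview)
Your proposal is correct and is precisely the assembly of prior results that the paper intends: the corollary carries no proof in the paper because it follows immediately from Theorems~\ref{seealso1}, \ref{seealso999}, \ref{muck3} and Corollaries~\ref{teks}, \ref{seealso2}, which is exactly the route you take. Your care in noting that \eqref{POC} trivially yields \eqref{POC}$_{0}$ (for the QF-FAN leg via Corollary~\ref{seealso2}) and that only $\QFAC^{1,0}$ is needed for the `uniform $\Rightarrow$ non-uniform' steps is appropriate.
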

The previous corollary suggests that, over a weak (intuitionistic) base theory, any theorem classically equivalent to weak K\"onig's lemma is equivalent to the fan functional.  
The same seems to hold for the uniform version if the latter is \emph{constructively}\footnote{By `constructively', we mean: provable in Errett Bishop's \emph{Constructive Analysis} (\cite{bish1}).\label{bitchje}} equivalent to the fan theorem (See also the conjecture in \cite{firstHORM}*{\S3}).  
We discuss this in more detail in the next section.  

\medskip

Finally, as hinted at above, a natural question emerging from Reverse Mathematics is whether a natural mathematical theorem can be split into two natural ones, i.e.\ find natural theorems of ordinary mathematics $T, S, R$ such that $T\asa S+R$ over $\RCA_{0}$, but neither $S$ or $R$ separately implies $T$.  

\medskip

Montalb\'an discusses this question in \cite{montahue}*{p.~435} and an answer is provided in \cite{schirfeld}, 
though the former author qualifies the results regarding the splitting of Ramsey's theorem for pairs only as `somewhat natural'.  
In our opinion, the splitting results for the fan functional discussed in this section, involve truly natural principles.  

\subsection{The general case}\label{dilkooo}
In this section, we obtain the Reverse Mathematics classification of the Brouwer's continuity theorem.   

\medskip

To this end, let the \emph{Brouwer Continuity Theorem}, BCT for short, be the statement that every real function is 
uniformly continuous on $[0,1]$, i.e.\ BCT is the statement that for every $\R\di \R$-function $F$, we have 
\be\label{lakke}\textstyle
(\forall k^{0})(\exists N^{0})(\forall x^{1},y^{1}\in [0,1])(|x-y|<\frac{1}{N}\di |F(x)-F(y)|<\frac{1}{k}).
\ee  
Let UBCT be BCT with a functional $\Phi^{(1\di 1)\di 1}$ outputting the number $N$ in \eqref{lakke}.         
Furthermore, let $T$ be the statement $(b)$ from \cite{kohlenbach3}*{p.\ 293} that a (pointwise) continuous function has a supremum, i.e.\ 
\[\textstyle
(\forall F\in C[0,1])(\exists y^{1})\big[(\forall x\in [0,1])(F(x)\leq y) \wedge (\forall k^{0})(\exists z\in [0,1])(F(z)> y-\frac{1}{k})\big].   
\]
Let $UT$ be $T$ with the extra existence of a functional $\Psi^{(1\di1)\di1}$ such that $\Psi(F)$ is the supremum $y$ from $T$ if $F\in C[0,1]$.  
Finally, let $T^{*}$ be \eqref{CONT2} from Section \ref{fafi}.    
\begin{thm}\label{allisone}
In $\RCAO+\eqref{POC}+\QFAC^{2,0}$, we have 
\begin{align}\label{allisone2}
\eqref{MUC}^{\st}&\asa\eqref{dikjui}^{\st}\asa \eqref{POS}^{\st} \asa \eqref{CONT}\asa \textup{BCT}^{\st}\asa \textup{UBCT}^{\st} \asa \textup{UBT}^{*}  \notag\\
&\asa \FAN^{\st}\asa\UFAN_{2}^{\st}\asa T^{\st}\asa UT^{\st}\asa T^{*}.
\end{align}
The associated internal principles are equivalent over $\RCAo+\eqref{POC}+\QFAC^{2,0}$.  
\end{thm}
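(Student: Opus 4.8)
The plan is to prove every principle occurring in \eqref{allisone2} equivalent to $\eqref{MUC}$ itself, routing all implications through this single node; this is legitimate since $\eqref{MUC}^{\st}\asa\eqref{MUC}$ already by Theorem~\ref{muck}. It is convenient to split \eqref{allisone2} into a \emph{Cantor-space cluster}, namely $\eqref{MUC}^{\st},\eqref{dikjui}^{\st},\FAN^{\st},\UFAN_{2}^{\st}$, and a \emph{real-function cluster}, namely $\eqref{POS}^{\st},\eqref{CONT}$, the Brouwer-continuity principles $\textup{BCT}^{\st}$, $\textup{UBCT}^{\st}$, $\textup{UBT}^{*}$, and the supremum principles $T^{\st},UT^{\st},T^{*}$. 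Throughout, $\QFAC^{2,0}$ is used to make standard extensionality (Theorem~\ref{halleh}) available whenever type-$2$ functionals are compared, while $\eqref{POC}$ will be used only in the implications \emph{towards} $\eqref{MUC}$ coming from the real-function cluster (and from $\FAN^{\st},\UFAN_{2}^{\st}$), reflecting that without continuity a type-$2$ functional need not induce a well-behaved real function.

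For the Cantor-space cluster essentially all the work is already available: $\eqref{MUC}\di\eqref{dikjui}^{\st}$ by taking $N:=\Omega(\varphi)$, which is standard for standard $\varphi$, and $\eqref{dikjui}^{\st}\di\eqref{druk}$ is immediate since $\alpha\approx_{1}\beta$ forces $\overline{\alpha}N=\overline{\beta}N$ at the standard $N$ supplied, whence $\eqref{MUC}$ by Theorem~\ref{muck}; the equivalences $\FAN^{\st}\asa\eqref{MUC}$ and $\UFAN_{2}^{\st}\asa\eqref{MUC}$ are exactly Theorems~\ref{seealso999} and~\ref{seealso1} (with Corollaries~\ref{tothecenter} and~\ref{teks} as cross-checks). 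For the real-function cluster, the direction $\eqref{MUC}\di(\,\cdot\,)$ is handled by the corollaries proved earlier: Corollary~\ref{tochwelbela} gives $\eqref{CONT}$ and $\eqref{POS}^{\st}$, Corollary~\ref{tochwelbela2} gives $T^{*}=\eqref{CONT2}$, and from $\eqref{MUC}^{\st}$ one reads off a standard modulus of uniform continuity for any standard $F:[0,1]\di\R$ (apply $\Omega$ to the discretisation $\varphi(\cdot,k_{0})$ from the proof of Corollary~\ref{tochwelbela}), which yields $\textup{UBCT}^{\st}$, hence $\textup{BCT}^{\st}$, and — using that a uniformly continuous $F$ on $[0,1]$ with a modulus has a supremum computable from that modulus (\cite{kohlenbach2}*{p.\ 293}) — also $UT^{\st}$ and $T^{\st}$. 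Among the remaining links, $\textup{UBCT}^{\st}\di\textup{BCT}^{\st}$ and $UT^{\st}\di T^{\st}$ are trivial, while $\textup{BCT}^{\st}\asa\eqref{CONT}$ holds outright in $\RCAO$: from $\eqref{CONT}$, for standard $F,k$ every infinite $N$ satisfies $B(N):\equiv(\forall x,y\in[0,1])(|x-y|<1/N\di|F(x)-F(y)|\le 1/k)$, so the internal upward-closed set $\{N:B(N)\}$, defined with only $F,k$ as parameters, has a non-infinite least element, which is $\textup{BCT}^{\st}$; conversely a standard modulus for error $1/k$ applies at infinitely close arguments, giving $\eqref{CONT}$.

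The crux is to recover $\eqref{MUC}$ from each member of the real-function cluster, and here the plan is a Cantor embedding. Fix the standard middle-thirds map $e:2^{\N}\di[0,1]$, $e(\alpha)=\sum_{i}2\alpha(i)3^{-(i+1)}$: it is standard, injective, has closed image, and satisfies $\alpha\approx_{1}\beta\asa e(\alpha)\approx e(\beta)$. Given standard $\varphi^{2}$, invoke $\eqref{POC}$ (and $\eqref{POC}^{\st}=\eqref{POCS}$ via Theorem~\ref{trikkeagain}) to obtain a standard pointwise modulus $\Delta(\varphi,\cdot)$ for $\varphi$ on Cantor space; this makes $\varphi\circ e^{-1}$ a standardly continuous function on $e(2^{\N})$, which — extending it linearly across the gaps of $e(2^{\N})$, using $\Delta$ — produces a genuine standard $\R\di\R$ function $F_{\varphi}$ on $[0,1]$ with $F_{\varphi}(e(\alpha))=\varphi(\alpha)$ for $\alpha\leq_{1}1$ (compare Remark~\ref{loofer} and \cite{kohlenbach4}*{Prop.\ 4.4} for the passage from a modulus to an associate/code). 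Applying a cluster-member to $F_{\varphi}$ — or to the function built the same way from a bounded recoding such as $\varphi/(\varphi+1)$ or $1/(\varphi+1)$ when positivity or boundedness is at issue — then returns the corresponding Cantor-space statement: $\eqref{CONT}$ forces $\varphi$ nonstandard continuous on Cantor space, i.e.\ $\eqref{druk}$; $\textup{BCT}^{\st}$ forces a standard modulus, i.e.\ $\eqref{cruxsks}$; $\eqref{POS}^{\st}$ forces a standard bound on $\varphi$, hence $\eqref{todo}$; and $T^{\st},UT^{\st},T^{*}$ force $\varphi$ to be bounded on Cantor space with an (approximately attained, computable) supremum, hence $\eqref{druk4}$ or $\eqref{SUP}^{\st}$. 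In each case Theorem~\ref{muck} (respectively Theorems~\ref{muck2} and~\ref{dofff}) delivers $\eqref{MUC}$, and since $\eqref{POC}+\QFAC^{2,0}$ keeps $\eqref{MUC}\asa\eqref{MUC}^{\st}$ available, the chain closes. The internal statement over $\RCAo+\eqref{POC}+\QFAC^{2,0}$ is obtained by the identical arguments, with PF-TP$_{\forall}$ replaced throughout by the minimality (or uniqueness) of the witnessing functionals together with $\QFAC^{2,0}$, exactly as for the internal equivalences in Theorems~\ref{muck3} and~\ref{muck4} and Corollary~\ref{tothecenter}.

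I expect the main obstacle to be precisely the construction and verification of $F_{\varphi}$: one must check that the gap-interpolation of $\varphi\circ e^{-1}$ is (i) representable in $\RCAO$ at all — and this is exactly where $\eqref{POC}$ is indispensable, since for a discontinuous $\varphi$ the naive ``round $x$ onto $e(2^{\N})$'' map is not computable — (ii) standard, i.e.\ it maps standard reals to standard reals and respects real equality, and (iii) faithful, in that its analytic behaviour on $e(2^{\N})$ mirrors exactly the combinatorial behaviour of $\varphi$ on $2^{\N}$. The $\eps$-$\delta$-versus-nonstandard underspill step underlying $\textup{BCT}^{\st}\asa\eqref{CONT}$ is a comparatively minor point, already dealt with above.
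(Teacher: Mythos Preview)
Your Cantor-embedding route is a genuinely different strategy from the paper's. The paper simply invokes the classical Reverse Mathematics of $\WKL$ (\cite{simpson2}*{I.10.3, IV.2}) together with Theorem~\ref{seealso999}: each real-function principle, relativized to `st', yields $\WKL^{\st}=\FAN^{\st}$ by the usual second-order proof (which transfers to higher-type functions once $\eqref{POC}$ supplies associates, cf.\ Remark~\ref{loofer}), and then $\FAN^{\st}+\eqref{POC}\di\eqref{MUC}$ closes the loop. Your approach is more self-contained and avoids appeal to the external RM literature, which is a genuine advantage.

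However, there is a gap in your embedding argument for the principles carrying a full `$^{\st}$' superscript, namely $\eqref{POS}^{\st}$, $\textup{BCT}^{\st}$, $T^{\st}$, $UT^{\st}$. Since $A^{\st}$ relativizes \emph{all} quantifiers (Remark~\ref{notawin}), the conclusion of e.g.\ $\eqref{POS}^{\st}$ only bounds $F_{\varphi}$ at \emph{standard} $x\in[0,1]$, hence only $\varphi$ at standard $\alpha\leq_{1}1$; this is strictly weaker than $\eqref{todo}$ or $\eqref{druk4}$, which require control at \emph{all} $f\leq_{1}1$. The same problem afflicts your claimed $\textup{BCT}^{\st}\di\eqref{CONT}$: a modulus valid only for standard $x,y$ says nothing about nonstandard $x\approx y$. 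Under $\eqref{POC}$ alone one cannot upgrade ``bounded/continuous at standard inputs'' to ``bounded/continuous everywhere on Cantor space'' --- that upgrade is precisely the compactness content of $\WKL$/$\FAN$. Your route does succeed for $\eqref{CONT}$ and $T^{*}=\eqref{CONT2}$, whose conclusions retain an unrelativized quantifier $(\forall x\in[0,1])$; but for the fully relativized principles you are forced back onto the paper's path through $\FAN^{\st}$ and Theorem~\ref{seealso999}.
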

\begin{proof}
Immediate from the previous results, the Reverse Mathematics of WKL as in \cite{simpson2}*{I.10.3} and of \eqref{MUC} as in \cite{kohlenbach2}*{p.\ 293}.  
For instance, if $F$ is \emph{nonstandard} uniformly continuous as in \eqref{CONT}, it is uniformly continuous in the usual $\eps$-$\delta$-sense, yielding $\WKL$ by \cite{simpson2}*{I.10.3}.    
Now use Theorems \ref{muck} and \ref{seealso999}.  
\end{proof}
The equivalence of \eqref{MUC} and the `non-computable' principle $\WKL$ is a complement to Tait's result that the fan functional as in \cite{noortje}*{Def.\ 4.35} is `recursive(ly countable) but not computable' as proved in \cite{noortje}*{Theorems 4.36 and 4.40} and \cite{gandymahat}*{p.\ 416-417}.  
Again, by \cite{kohlenbach4}*{Cor.\ 4.9}, the assumption \eqref{POC} does not seem to be a strong one.  In other words, assuming the latter weak intuitionistic principle, a plethora of equivalences 
as in \eqref{allisone2} emerges.  

\medskip
 
As mentioned above, it seems possible to replace $T$ in \eqref{allisone2} by any theorem such that $\FAN\asa T$ constructively$^{\ref{bitchje}}$, e.g.\ concerning Riemann integration (\cite{simpson2}*{I.10.3.5}), 
polynomial approximation\footnote{In light of \cite{samzoo}*{\S3.2}, to obtain a uniform version of \cite{simpson2}*{IV.2.5} equivalent to $\WKL$, the functional should output \emph{a finite list} of polynomials, similar to HAC$_{\textup{int}}$.} (\cite{simpson2}*{IV.2.5}), and unique existence statements (\cite{ishberg}).  

\medskip

We consider the case for Riemann integration.  
Let $S$ be the statement that a continuous function is Riemann integrable on $[0,1]$, let $\US$ be $S$ with the existence of a functional $\Psi^{(1\di 1)\di 1}$ such that $\Psi(F)$ is the Riemann integral for $F\in C[0,1]$, and let $S^{*}$ be the statement that for every standard $F\in C[0,1]$, 
the Riemann sums are infinitely close for infinitesimal partitions, i.e.\ $S_{\pi}(F)\approx S_{\pi'}(F)$, for $\pi=(0,t_{1}, \dots, t_{M}, 1)$ with $\max_{i\leq M}|t_{i-1}-t_{i}|\approx 0$, and $\pi'$ similar.   
\begin{cor}\label{classicalcase}  
The equivalence \eqref{allisone2} can be extended by $\dots \asa S^{\st}\asa \US\,{^{\st}}\asa S^{*}$.  
\end{cor}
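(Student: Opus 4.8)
The plan is to run the argument of Theorem~\ref{allisone} one more time, with Riemann integrability in place of the supremum statement $T$. Concretely, we shall show that $\eqref{MUC}^{\st}$ (and its consequence $\eqref{CONT}$) implies each of $S^{\st}$, $\US\,{^{\st}}$ and $S^{*}$, and conversely that each of these implies $\FAN^{\st}$, which already occurs in the chain of Theorem~\ref{allisone}; the corollary then follows. The external ingredients are the classical Reverse Mathematics of Riemann integration over $\RCA_{0}$ --- namely that ``every continuous $F\colon[0,1]\to\R$ is bounded'' and ``every continuous $F\colon[0,1]\to\R$ is Riemann integrable'' are each equivalent to $\WKL_{0}$, see \cite{simpson2}*{I.10.3} and \cite{simpson2}*{I.10.3.5} --- together with the nonstandard continuity results of Sections~\ref{main}--\ref{strongEMT}.

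For the forward implications, assume $\eqref{MUC}^{\st}$. By Theorems~\ref{muck} and~\ref{allisone} we get $\textup{BCT}^{\st}$ together with a standard modulus of uniform continuity; a uniformly continuous function on $[0,1]$ with a modulus is Riemann integrable, and its integral is the limit of Riemann sums over a partition whose mesh is read off from the modulus (\cite{kohlenbach2}*{p.\ 293}). Performing this relative to `st' produces the functional $\Psi$ asked for by $\US$, hence $\US\,{^{\st}}$ and a fortiori $S^{\st}$. For $S^{*}$, recall from Corollary~\ref{tochwelbela} that $\eqref{MUC}$ implies $\eqref{CONT}$, so a standard $F\in C[0,1]$ is nonstandard uniformly continuous and finitely bounded; given infinitesimal partitions $\pi,\pi'$, pass to a common refinement $\pi''$, note that the oscillation of $F$ on each subinterval of $\pi''$ is infinitesimal while the lengths of these subintervals sum to $1$, and conclude $S_{\pi}(F)\approx S_{\pi''}(F)\approx S_{\pi'}(F)$.

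For the reverse implications, the crucial point is that $S^{\st}$ (respectively $S^{*}$) forces every standard $F\in C[0,1]$ to be finitely bounded. For $S^{\st}$ this is immediate: an internally unbounded standard $F$ would have unbounded, hence non-Cauchy, Riemann sums, contradicting integrability. Since boundedness of continuous functions on $[0,1]$ is equivalent to $\WKL_{0}$ over $\RCA_{0}$, and this equivalence relativises to the standard world (this is how $\FAN^{\st}$ enters Theorem~\ref{allisone}), we obtain $\FAN^{\st}$, and thence $\eqref{MUC}^{\st}$ via Theorem~\ref{seealso999}. The internal equivalences follow by the same reasoning over $\RCAo+\eqref{POC}+\QFAC^{2,0}$, using the internal halves of Theorems~\ref{muck}, \ref{seealso999} and~\ref{allisone}. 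The step I expect to be the main obstacle is the implication $S^{*}\di$``every standard $F\in C[0,1]$ is finitely bounded''. A standard function can be unbounded only ``at nonstandard points'', so one must first observe that the blow-up happens on a set of \emph{infinitesimal} diameter --- any standard-radius neighbourhood of a point where $F$ is infinite would contain the standard part of that point, at which $F$ is necessarily finite --- and then rig two infinitesimal partitions with the same division points but with tags chosen to hit, respectively to avoid, this high-value region, with the mesh tuned so that the two Riemann sums differ by a non-infinitesimal amount. This is a routine overspill-style argument, but it is the only genuinely new work beyond Theorem~\ref{allisone}.
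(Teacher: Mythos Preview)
Your forward direction is correct and essentially matches the paper: the fan functional yields a modulus of uniform continuity, from which the Riemann integral is computed constructively (Bishop), giving $\US\,{^{\st}}$ and $S^{\st}$; and nonstandard uniform continuity \eqref{CONT} immediately gives $S^{*}$ via the common-refinement argument you describe.

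The reverse direction, however, takes an unnecessary detour, and the part you flag as the ``main obstacle'' is both avoidable and, as sketched, problematic. The paper's route is much shorter: one observes that $S^{*}\di S^{\st}$ (nonstandard Riemann integrability implies the standard $\eps$--$\delta$ version, by the usual combination of $\Omega$-CA to produce the standard integral as the standard part of $S_{\pi_{M}}(F)$, and idealization~I to verify the Cauchy criterion), and then simply cites \cite{simpson2}*{IV.2.7}, where Riemann integrability of continuous functions is \emph{directly} shown equivalent to $\WKL$ over $\RCA_{0}$. There is no need to pass through boundedness and \cite{simpson2}*{IV.2.3} at all.

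Your partition-rigging argument for $S^{*}\di$ boundedness is more than a detour: as written it risks circularity. You invoke ``the standard part of that point'' where $F$ is infinite, but the existence of standard parts for arbitrary reals in $[0,1]$ is \eqref{STP}, which (as noted in Section~\ref{pdef}) already yields a conservative extension of $\WKL_{0}$ --- precisely what you are trying to derive. Even if you avoid standard parts, controlling $F$ at the \emph{nonstandard} division point $a$ adjacent to $x_{0}$ is delicate: $F(a)$ may itself be infinite, so the single-term difference $(F(x_{0})-F(a))\cdot(b-a)$ need not be non-infinitesimal for any obvious choice of mesh. None of this is needed once you go via $S^{*}\di S^{\st}\di \WKL^{\st}$.
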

\begin{proof}
First note that the Riemann integral of a uniformly continuous function \emph{with a modulus} even exists constructively by \cite{bish1}*{p.\ 47}.  
The same holds for e.g.\ the supremum and the polynomial approximation by \cite{bish1}*{p.\ 35 and p.\ 100}.  Hence, as the fan functional \eqref{MUC} provides a modulus of uniform continuity (See \cite{kohlenbach2}*{p.~293}), it is straightforward to obtain the functional outputting the Riemann integral.  Similarly, by Corollary~\ref{tochwelbela}, we may assume nonstandard continuity, immediately yielding that such a function is nonstandard Riemann integrable as in $S^{*}$.  Clearly, both the latter and $US^{\st}$ imply $S^{\st}$, which yields $\WKL^{\st}$ by \cite{simpson2}*{IV.2.7}.  
\end{proof}
We could also study the \emph{continuous uniform boundedness principle} CUB from \cite{gako}*{\S6} in this context.   
Since $\Sigma_{0}^{0}$-CUB is equivalent to WKL and in light of its syntactic structure, it is clear that the uniform version of $\Sigma_{0}^{0}$-CUB implies $\WKL$ and follows from UFAN$_{2}$.  
Hence, it also behaves as in \eqref{allisone2}.    

\subsection{An alternative nonstandard version}\label{layola}
In this section, we suggest a slight extension of the EMT, as follows:  We formulate a nonstandard version $T^{**}$ equivalent to $UT$, for certain theorems $T$.  
The template $T^{**}$ expresses that a weak property (like pointwise continuity) holds at \emph{every} point of the space at hand, in contrast to a strong property (like uniform continuity in case of \eqref{MUC}$^{\st}$) holding at every standard point.  A first example was \eqref{donkio} in Corollary \ref{kloppip}.     
\begin{cor}  
The equivalence \eqref{allisone2} can be extended by $\dots\asa \eqref{burka}$, the latter expressing pointwise continuity at \emph{every} point of the unit interval, i.e.\  
\begin{align}\label{burka}\tag{$\mathfrak{Z}$}\textstyle
(\forall^{\st}F:[0,1]\di \R)&(\forall^{\st}k^{0})(\forall x^{1}\in [0,1])(\exists^{\st}N^{0}) \\
&\textstyle(\forall^{\st}y^{1}\in [0,1])(|x-y|<\frac{1}{N}\di |F(x)-F(y)|<\frac{1}{k}).\notag
\end{align}
\end{cor}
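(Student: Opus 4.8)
The plan is to show that \eqref{burka} occupies the same slot in the equivalence chain \eqref{allisone2} as \eqref{donkio} does for Cantor space in Corollary \ref{kloppip}. Concretely, I would prove two implications: first that \eqref{CONT} (nonstandard uniform continuity on $[0,1]$ for all standard $F$) implies \eqref{burka}, and second that \eqref{burka} implies one of the already-listed principles, for which the natural target is \eqref{POS}$^{\st}$ (finite positive lower bound for standard positive $F$ on $[0,1]$), or equivalently BCT$^{\st}$. Given that \eqref{CONT} is already in \eqref{allisone2} and so is \eqref{POS}$^{\st}$, both implications together splice \eqref{burka} into the chain.

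The forward implication \eqref{CONT}$\di$\eqref{burka} is the routine direction: assuming \eqref{CONT}, fix standard $F:[0,1]\di\R$, standard $k^{0}$, and an \emph{arbitrary} $x^{1}\in[0,1]$. By \eqref{CONT} applied with $x$ as the standard argument is not directly available since $x$ need not be standard; instead I would argue that \eqref{CONT} yields, for any standard $F$, that $F$ is $\eps$-$\delta$-uniformly continuous (as noted in the proof of Theorem \ref{allisone}, since nonstandard uniform continuity implies the usual $\eps$-$\delta$ version), hence there is a standard modulus, and then pointwise continuity with a standard bound $N$ at every (including nonstandard) point $x$ follows by instantiating the standard uniform modulus at the given $k$. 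Relativising the quantifiers on $y$ and $N$ to `st' is then immediate, giving \eqref{burka}. The only subtlety is to make sure the modulus obtained is genuinely standard, which follows from PF-TP$_{\forall}$ exactly as in Remark \ref{tokkier}.

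The reverse implication \eqref{burka}$\di$\eqref{POS}$^{\st}$ is where the real work — and the main obstacle — lies, and here I would mimic the proof of Corollary \ref{kloppip} closely, using the weak compactness property \eqref{WCP} (available in $\RCAO$ by the idealisation axiom I, as recalled just before Section \ref{LOCO}). From \eqref{burka}, for standard $F$ and standard $k$ we get $(\forall x\in[0,1])(\exists^{\st}N)(\forall^{\st}y\in[0,1])(|x-y|<\frac1N\di |F(x)-F(y)|<\frac1k)$; replacing $y$ by a suitable standard rational truncation of $x$ (the analogue of $\overline{f}k*00\dots$ being standard for standard $k$) turns this into a statement of the form $(\forall x\in[0,1])(\exists^{\st}N)\,\Theta(F,k,x,N)$ with $\Theta$ internal, to which \eqref{WCP} applies, yielding a single standard bound $(\exists^{\st}n)(\forall x\in[0,1])(\exists N\le n)\,\Theta$. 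Iterating over a standard mesh and using that $[0,1]$ is covered by finitely many such neighbourhoods, one extracts a uniform $\eps$-$\delta$ modulus, i.e.\ BCT$^{\st}$, which then yields \eqref{POS}$^{\st}$ exactly as in the proof of Corollary \ref{tochwelbela} (the $N_{0}$ argument there). The main obstacle is handling the real-number (rather than binary-sequence) bookkeeping when passing from `$x$' to a standard approximant and back: one must choose representatives of reals in $[0,1]$ so that the relevant approximations are standard for standard indices, which can be arranged as in Notation \ref{keepintireal} and the binary-expansion trick from \cite{polarhirst}*{p.\ 305} used in Corollary \ref{tochwelbela}. With that in hand, the equivalence $\eqref{POS}^{\st}\asa\eqref{burka}$ closes, and by Theorem \ref{allisone} the full chain \eqref{allisone2} extends to include \eqref{burka}.
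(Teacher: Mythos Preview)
Your approach is essentially the paper's: both directions mirror Corollary~\ref{kloppip}, using \eqref{WCP} for the hard direction and a continuity principle from the chain for the easy one. The paper's proof is terser but structurally identical.

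The one meaningful difference is the landing point in the reverse implication. You aim for BCT$^{\st}$ (full uniform continuity), whereas the paper targets the weaker statement that every standard $F$ is \emph{bounded} on $[0,1]$, and then invokes \cite{simpson2}*{IV.2.3} to obtain $\WKL^{\st}$. Boundedness drops out immediately from the \eqref{WCP} step you describe: once you have $(\exists^{\st}n)(\forall x\in[0,1])(\exists N\le n)\big[|F(x)-F(q_{N}(x))|<\tfrac{1}{k}\big]$ for a standard rational approximant $q_{N}(x)$, you get $|F(x)|\le \max_{j\le 2n,\,N\le n}|F(j/(2N))|+\tfrac{1}{k}$, a standard bound. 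Extracting a genuine uniform modulus from this (your ``iterating over a standard mesh'') is extra work that you do not spell out and do not need; since \eqref{POC} is in the base, $F$ is already continuous, so boundedness alone triggers the RM equivalence with $\WKL$. Your route is not wrong, just longer, and the sketch of that extra step is the only vague part of your proposal. For the forward direction, your argument via the standard uniform modulus from \eqref{CONT} is correct and arguably cleaner than the paper's phrasing (``standard pointwise continuity of $F$ together with \eqref{CONT}''), which amounts to the same thing once unpacked.
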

\begin{proof}
Similar to the proof of Corollary \ref{kloppip}, we can derive that every standard $F^{1\di 1}$ is bounded on $[0,1]$.  By \cite{simpson2}*{IV.2.3}, we obtain $\WKL^{\st}$ from \eqref{burka}.  Furthermore, the latter easily follows from the (standard) pointwise continuity of $F$ together with the nonstandard continuity as in \eqref{CONT}.    
\end{proof}
As suggested by Corollary \ref{classicalcase}, results from Friedman-Simpson Reverse Mathematics can be used to obtain equivalences as in Theorem \ref{allisone}.   
The Heine-Borel lemma constitutes another example as it is constructively equivalent to the fan theorem.  
It is straightforward to obtain the EMT and results similar to \eqref{allisone2} for the former (See e.g.\ \cite{firstHORM}*{\S5}).  
However, the Heine-Borel lemma also has an interesting formulation akin to \eqref{burka}, as in the following.     
Note that $I_{n}^{0\di (1\times 1)}$ is an open cover in that $I_{n}=(c_{n},d_{n})$ for sequences of reals $c_{n},d_{n}$ such that $(x\in I_{n})\equiv (c_{n}<x<d_{n})$.  
\begin{cor}
The equivalence \eqref{allisone2} can be extended by $\dots\asa \eqref{burka2}$, the latter stating that standardly covering $[0,1]$ implies fully covering $[0,1]$, i.e.\
\be\label{burka2}\tag{$\mathfrak{B}$}
(\forall^{\st}I_{n}^{0\di (1\times 1)})\big[(\forall^{\st}x\in [0,1])(\exists^{\st}n^{0})(x\in I_{n})\di (\forall x\in [0,1])(\exists^{\st}n^{0})(x\in I_{n}) \big]
\ee
\end{cor}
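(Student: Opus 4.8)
The plan is to establish that \eqref{burka2} sits inside the equivalence cycle \eqref{allisone2} by proving two implications: first that \eqref{burka2} yields (over $\RCAO+\eqref{POC}+\QFAC^{2,0}$) some principle already known to be equivalent to \eqref{MUC}$^{\st}$, and second that \eqref{MUC} (equivalently, nonstandard uniform continuity \eqref{CONT}) yields \eqref{burka2}. For the forward direction, I would argue that \eqref{burka2} classically implies the Heine–Borel lemma for $[0,1]$ in the $\st$-world, hence $\WKL^{\st}$; indeed, fixing a standard open cover $(I_n)$ of $[0,1]$ (so the antecedent of \eqref{burka2} holds since \emph{every} point — in particular every standard point — is covered), \eqref{burka2} gives that every point, standard or not, lies in some $I_n$ with standard index $n$. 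One then applies $\Omega$-CA (or, more directly, an overspill/idealisation argument via \eqref{WCP}) to extract a \emph{standard} finite $N$ such that $[0,1]\subseteq \bigcup_{n\le N}I_n$: if no standard $N$ worked, then for each standard $N$ there is a standard point outside $\bigcup_{n\le N}I_n$, and by the usual bisection/compactness reasoning available once we have the covering property at all points one produces a point covered only by infinite indices, contradicting \eqref{burka2}. With $\WKL^{\st}$ in hand — and since \eqref{burka2} plainly forces every standard $F:[0,1]\to\R$ to be bounded by a standard bound (cover $[0,1]$ by the standard sets $\{x:|F(x)|<k\}$, note the hypothesis of \eqref{burka2} holds by totality of $F$, and conclude via the same finite-subcover extraction) — one invokes \cite{simpson2}*{IV.2.3} exactly as in the proof of the \eqref{burka} corollary to land inside the cycle.

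For the reverse direction, I would derive \eqref{burka2} from \eqref{CONT} (which follows from \eqref{MUC} by Corollary \ref{tochwelbela}) together with the ambient assumption \eqref{POC}: assume the antecedent of \eqref{burka2}, so $(\forall^{\st}x\in[0,1])(\exists^{\st}n)(x\in I_n)$, for a standard sequence $(I_n)=(c_n,d_n)$. Fix an arbitrary $x\in[0,1]$; by the Standard Part principle ($\Omega$-CA / STP) there is a standard $x_0\in[0,1]$ with $x\approx x_0$ (the standard part of $x$). By hypothesis $x_0\in I_{n_0}=(c_{n_0},d_{n_0})$ for some standard $n_0$, so $c_{n_0}<x_0<d_{n_0}$ with all of $c_{n_0},d_{n_0},x_0$ standard; hence $c_{n_0}<x_0$ and $x_0<d_{n_0}$ are \emph{non-infinitesimal} strict inequalities ($x_0\gg c_{n_0}$ and $d_{n_0}\gg x_0$ in the sense of Notation \ref{keepintireal}), and since $x\approx x_0$ we get $c_{n_0}<x<d_{n_0}$, i.e.\ $x\in I_{n_0}$ with $n_0$ standard. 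This gives $(\forall x\in[0,1])(\exists^{\st}n)(x\in I_n)$, which is exactly the consequent of \eqref{burka2}. (Here I have only used STP to form $x_0$ and the elementary fact that standard reals satisfying a standard strict inequality satisfy it `robustly'; \eqref{CONT} is not even strictly needed for this half, but I would mention it to match the proof pattern of the preceding corollary and because the pointwise-continuity flavour of \eqref{burka2} is what makes it a genuine `$T^{**}$' in the sense of Section \ref{layola}.)

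The main obstacle I anticipate is the extraction, in the forward direction, of a \emph{standard} finite subcover from the nonstandard covering property — i.e.\ justifying cleanly that \eqref{burka2}'s consequent (`every point is covered by a standard index') upgrades to `some standard $N$ covers everything'. The honest route is: suppose not; then $(\forall^{\st}N)(\exists x\in[0,1])(x\notin\bigcup_{n\le N}I_n)$, and one wants a single $x$ escaping $\bigcup_{n\le N}I_n$ for all standard $N$ simultaneously, which is precisely what \eqref{WCP} (the weak compactness property, available in $\RCAO$) delivers when applied to the internal quantifier-free matrix `$x\notin\bigcup_{n\le N}I_n$' suitably coded against a binary search through $[0,1]$ — yielding an infinite $K$ and a point covered only by infinite indices, contradicting \eqref{burka2}'s consequent. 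Making the coding precise (open intervals with real endpoints require care to phrase `$x\in I_n$' as an internal arithmetical condition and to run the bisection) is the fiddly part; but this is the same machinery already used for \eqref{proffff}$\to$\eqref{druk4}$^\dagger$ in Corollary \ref{kloppip}, so I would keep the write-up terse and cite \cite{simpson2}*{IV.2.3} for the classical Heine–Borel/boundedness reduction once $\WKL^{\st}$ is secured, exactly as in the adjacent corollaries.
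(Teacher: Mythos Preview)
Your forward direction is correct in spirit but overcomplicated compared to the paper. The paper applies \eqref{WCP} \emph{directly} to the consequent $(\forall x\in[0,1])(\exists^{\st}n)(x\in I_n)$ of \eqref{burka2}, obtaining $(\exists^{\st}k)(\forall x\in[0,1])(\exists n\le k)(x\in I_n)$, i.e.\ a standard finite subcover, whence Heine--Borel$^{\st}$ and $\WKL^{\st}$ by \cite{simpson2}*{I.10.3}. You do mention this route, but then wrap it in an unnecessary contradiction/bisection argument; no coding against a binary search is needed, since the consequent of \eqref{burka2} is already literally of the form $(\forall f)(\exists^{\st}n)A_0$ to which \eqref{WCP} applies. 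Your alternative via the sets $\{x:|F(x)|<k\}$ does not fit the format of \eqref{burka2}, as these need not be single intervals $(c_n,d_n)$.

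Your reverse direction has a genuine gap. You write ``by the Standard Part principle ($\Omega$-CA / STP) there is a standard $x_0\approx x$'', but $\Omega$-CA is \emph{not} \eqref{STP} and does not furnish standard parts of arbitrary reals: it applies only to $\Omega$-invariant standard functionals (Definition~\ref{homega}), and an arbitrary $x\in[0,1]$ gives you no such thing. The principle \eqref{STP} is identified in Section~\ref{pdef} as living at the level of $\WKL$, not in $\RCAO$; even granting that the cycle contains $\WKL^{\st}$, deriving \eqref{STP} from it is a separate step you neither mention nor carry out. The paper avoids this entirely by a different route: from the cycle, apply Heine--Borel$^{\st}$ to the \emph{antecedent} of \eqref{burka2} to extract a standard finite subcover $I_{n_1},\dots,I_{n_k}$ of the standard points of $[0,1]$; that this finite union of standard open intervals then covers every (possibly nonstandard) $x\in[0,1]$ is a combinatorial fact about finitely many standard endpoints $c_{n_j},d_{n_j}$, requiring no standard-part map at all.
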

\begin{proof}
Apply \eqref{WCP} to the consequent of \eqref{burka2} to obtain a finite cover of $[0,1]$.  By \cite{simpson2}*{I.10.3}, WKL$^{\st}$ follows from \eqref{burka2}.  To obtain the latter, use the Heine-Borel lemma to obtain a finite cover for $I_{n}$ as in the antecedent of \eqref{burka2}.  It is easy to verify that this finite cover also covers the nonstandard points in $[0,1]$.  
\end{proof}
Another principle akin to \eqref{burka} and \eqref{burka2} is the following, based on \cite{simpson2}*{IV.2.3.3}. 
\begin{cor}
The equivalence \eqref{allisone2} can be extended by $\dots\asa \eqref{burka3}$, the latter expressing that a standard function $F:[0,1]\di \R$ is finite \emph{everywhere} in $[0,1]$, i.e.\
\be\label{burka3}\tag{$\mathfrak{J}$}
(\forall^{\st}F:[0,1]\di \R)(\forall x\in [0,1])(\exists^{\st}N^{0})(|F(x)|<N).
\ee
\end{cor}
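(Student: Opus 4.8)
The plan is to splice \eqref{burka3} into the chain \eqref{allisone2} exactly as in the two preceding corollaries (on \eqref{burka} and \eqref{burka2}): one direction produces $\WKL^{\st}$ and invokes Friedman--Simpson RM, and the other recovers \eqref{burka3} from a principle already occurring in \eqref{allisone2}, after which Theorem \ref{allisone} closes the loop.

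For the forward implication, the idea is to upgrade the \emph{pointwise} finiteness expressed by \eqref{burka3} to uniform boundedness by a \emph{standard} number, via the idealisation axiom in the form \eqref{WCP}. In detail: code the reals in $[0,1]$ by elements of Cantor space in the usual way, write $x_{f}$ for the real coded by $f^{1}\leq_{1}1$, and unpack the $\Sigma_{1}^{0}$-inequality $|F(x_{f})|<N$ into the shape $(\exists m^{0})A_{0}(F,f,N,m)$ with $A_{0}$ internal and quantifier-free; merging the pair $(N,m)$ into a single numerical witness, \eqref{burka3} becomes $(\forall f^{1}\leq_{1}1)(\exists^{\st}j^{0})A_{0}'(F,f,j)$, a statement of exactly the form to which \eqref{WCP} applies. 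The conclusion of \eqref{WCP} is then $(\exists^{\st}k^{0})(\forall f^{1}\leq_{1}1)(|F(x_{f})|<k)$, i.e.\ every standard $F:[0,1]\di\R$ is bounded on $[0,1]$ by a standard natural number; in particular every standard continuous $F\in C[0,1]$ is bounded, so $\WKL^{\st}$ follows by \cite{simpson2}*{IV.2.3}, and hence all of \eqref{allisone2} relative to `st' by Theorem \ref{allisone}.

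For the reverse implication it suffices to derive \eqref{burka3} from the principle $T^{*}$, i.e.\ \eqref{CONT2}, which already occurs in \eqref{allisone2}. Applying \eqref{CONT2} to a standard $F:[0,1]\di\R$ yields a standard $y^{1}$ with $(\forall x\in[0,1])(F(x)\lessapprox y)$; applying it also to the standard function $-F$ yields a standard lower bound for $F$ on $[0,1]$. Together these readily give a standard $N^{0}$ with $(\forall x\in[0,1])(|F(x)|<N)$, so in particular $(\forall x\in[0,1])(\exists^{\st}N^{0})(|F(x)|<N)$, which is \eqref{burka3}.

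The only delicate point is the coding in the forward direction: one has to make sure that ``$F(x)$ is finite with a standard bound'' genuinely has the syntactic form $(\forall f^{1})(\exists^{\st}n^{0})A_{0}(f,n)$ required by \eqref{WCP}, which is why the real-number inequality is unpacked and the two existential number quantifiers merged. This is routine, as is the relativisation of \cite{simpson2}*{IV.2.3} to `st' (the unbounded continuous function built from a path-free tree is standard whenever the tree is), mirroring the treatment of \eqref{burka} and \eqref{burka2}.
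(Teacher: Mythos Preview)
Your proposal is correct and follows essentially the same route as the paper: apply \eqref{WCP} to \eqref{burka3} to obtain a uniform standard bound on $F$, invoke \cite{simpson2}*{IV.2.3} for $\WKL^{\st}$, and for the converse derive \eqref{burka3} from a principle already in \eqref{allisone2}. The only (minor) difference is that the paper uses \eqref{CONT} for the converse---any $x\in[0,1]$ is infinitely close to a standard $q$, whence $F(x)\approx F(q)$ is finite---while you use $T^{*}=\eqref{CONT2}$ applied to $F$ and $-F$; both principles occur in \eqref{allisone2}, so either argument closes the loop.
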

\begin{proof}
Applying \eqref{WCP} to \eqref{burka3}, clearly $F$ is bounded for all standard $x\in [0,1]$, and \cite{simpson2}*{IV.2.3} yields $\WKL^{\st}$ from \eqref{burka3}.  To obtain the latter, use \eqref{CONT}.    
\end{proof}
Finally, we prove the equivalence between \eqref{SUP} and \eqref{todo} from Section \ref{fafi}.  
\begin{thm}\label{dofff}
In $\RCAO+\QFAC^{2,0}$, \eqref{SUP} is equivalent to \eqref{todo}. 
\end{thm}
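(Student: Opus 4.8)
The plan is to derive both implications from the equivalences already established, using \eqref{MUC2} as the pivot for the harder direction. For the forward implication $\eqref{SUP}\di\eqref{todo}$ I would observe that the matrix of \eqref{SUP} is internal and has no free variable other than the functional $G$, so the contraposition of PF-TP$_{\forall}$ — exactly as for \eqref{MUC} in Remark~\ref{tokkiep} — upgrades \eqref{SUP} to \eqref{SUP}$^{\st}$ (this is also immediate from Theorem~\ref{muck2}). Then for standard $\varphi^{2}$ the value $G(\varphi)$ is standard and satisfies $(\forall f^{1}\leq_{1}1)(\varphi(f)\leq_{0}G(\varphi))$, so $n:=G(\varphi)$ witnesses the consequent of \eqref{todo}.

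For the reverse implication the plan is to produce \eqref{MUC2} and then invoke Theorems~\ref{muck} and~\ref{muck2}. First I would reconstruct, as in the proof of Theorem~\ref{halleh}, a standard ``modulus of extensionality'': the axiom \eqref{EXT} applied to type $1\di 0$ objects gives $(\forall\varphi^{2},f^{1},g^{1})(\exists N^{0})[\overline{f}N=_{0}\overline{g}N\di\varphi(f)=_{0}\varphi(g)]$, and applying $\QFAC^{2,0}$ and then the contraposition of PF-TP$_{\forall}$ (the resulting formula is internal with only $\Gamma$ as a parameter) yields a \emph{standard} $\Gamma^{(2\times 1\times 1)\di 0}$ with
\[
(\forall\varphi^{2},f^{1},g^{1}\leq_{1}1)\big[\overline{f}\Gamma(\varphi,f,g)=_{0}\overline{g}\Gamma(\varphi,f,g)\di\varphi(f)=_{0}\varphi(g)\big].
\]
Next, fixing a standard $\varphi^{2}$, I would apply \eqref{todo} to the standard functional $\psi^{2}$ defined by $\psi(h):=\Gamma(\varphi,h_{0},h_{1})$, where $h_{0},h_{1}\leq_{1}1$ are the even- and odd-indexed subsequences of $h$; this gives $(\forall f^{1},g^{1}\leq_{1}1)(\exists^{\st}n^{0})(\Gamma(\varphi,f,g)\leq_{0}n)$. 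Since this matrix is internal and quantifier-free with only the standard parameters $\Gamma,\varphi$, the weak compactness principle \eqref{WCP} — available in $\RCAO$, as recalled in Section~\ref{SB} — yields a standard $k$ with $(\forall f^{1},g^{1}\leq_{1}1)(\Gamma(\varphi,f,g)\leq_{0}k)$. Then, whenever $\overline{f}k=_{0}\overline{g}k$ for $f,g\leq_{1}1$, one has $\overline{f}\Gamma(\varphi,f,g)=_{0}\overline{g}\Gamma(\varphi,f,g)$ and hence $\varphi(f)=_{0}\varphi(g)$ by the displayed property of $\Gamma$; as $\varphi$ was an arbitrary standard functional and $k$ is standard, this is exactly \eqref{MUC2}. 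Finally Theorem~\ref{muck} gives $\eqref{MUC2}\di\eqref{MUC}$ and Theorem~\ref{muck2} gives $\eqref{MUC}\di\eqref{SUP}$.

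I expect the step $\eqref{todo}\di\eqref{MUC2}$ to be the main obstacle: one must convert ``finite-valued at every point of Cantor space'' into the \emph{standard} uniform modulus of \eqref{MUC2}. The naive attempt — letting $\psi(f\oplus g)$ record the first coordinate where $f$ and $g$ differ (and $0$ when $\varphi(f)=\varphi(g)$) and feeding $\psi$ to \eqref{todo} — fails on both definability and standardness grounds, since for nonstandard $f\approx_{1}g$ that first disagreement coordinate is typically nonstandard. Extracting a single standard functional $\Gamma$ first (as in Theorem~\ref{halleh}) and only then invoking \eqref{todo} together with the idealisation-consequence \eqref{WCP} is what circumvents this; the remaining passages are routine manipulations with the equivalences of Theorems~\ref{muck} and~\ref{muck2}.
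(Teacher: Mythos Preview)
Your proof is correct and follows essentially the same route as the paper's one-line argument ``Apply \eqref{WCP} to \eqref{todo}; Use Theorem~\ref{halleh} and the proof of Theorem~\ref{muck2}.''  The only cosmetic difference is the order of operations: the paper applies \eqref{WCP} to \eqref{todo} first (obtaining the first conjunct of \eqref{druk4}) and then invokes the Theorem~\ref{muck2} machinery, whereas you first build the standard extensionality modulus $\Gamma$ via Theorem~\ref{halleh}, feed $\Gamma(\varphi,\cdot\oplus\cdot)$ into \eqref{todo}, and only then apply \eqref{WCP} --- arriving at \eqref{MUC2} directly rather than via the $\Omega$-invariance detour in the second half of the proof of Theorem~\ref{muck2}.
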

\begin{proof}
Apply \eqref{WCP} to \eqref{todo}; Use Theorem \ref{halleh} and the proof of Theorem \ref{muck2}.  
\end{proof}
If Reverse Mathematics were to be about `obtaining as many equivalences as possible', \eqref{POC} would surely be a fruitful principle.  

\begin{bibdiv}
\begin{biblist}
\bib{atjeendaaltje}{article}{
  author={van Atten, Mark},
  author={van Dalen, Dirk},
  title={Arguments for the Continuity Principle},
  year={2002},
  journal={Bulletin of Symbolic Logic},
  volume={8},
  pages={329--347},
}

\bib{avi3}{article}{
  author={Avigad, Jeremy},
  title={Weak theories of nonstandard arithmetic and analysis},
  note={pp.\ 19-46 in \cite {simpson1}},
}

\bib{avi2}{article}{
  author={Avigad, Jeremy},
  author={Feferman, Solomon},
  title={G\"odel's functional \(``Dialectica''\) interpretation},
  conference={ title={Handbook of proof theory}, },
  book={ series={Stud. Logic Found. Math.}, volume={137}, },
  date={1998},
  pages={337--405},
}

\bib{farwise}{article}{
  author={Feferman, Solomon},
  title={Theories of Finite Type Related to Mathematical Practice},
  conference={ title={Handbook of mathematical logic}},
  note={Studies in Logic and the Foundations of Mathematics, Vol. 90},
  date={1977},
}

\bib{fefja1}{article}{
  author={Feferman, Solomon},
  author={J{\"a}ger, Gerhard},
  title={Systems of explicit mathematics with non-constructive $\mu $-operator. II},
  journal={Ann. Pure Appl. Logic},
  volume={79},
  date={1996},
  number={1},
  pages={37--52},
}

\bib{fefja2}{article}{
  author={Feferman, Solomon},
  author={J{\"a}ger, Gerhard},
  title={Systems of explicit mathematics with nonconstructive $\mu $-operator. I},
  journal={Ann. Pure Appl. Logic},
  volume={65},
  date={1993},
  number={3},
  pages={243--263},
}

\bib{brie}{article}{
  author={van den Berg, Benno},
  author={Briseid, Eyvind},
  author={Safarik, Pavol},
  title={A functional interpretation for nonstandard arithmetic},
  journal={Ann. Pure Appl. Logic},
  volume={163},
  date={2012},
  number={12},
  pages={1962--1994},
}

\bib{bennosam}{article}{
  author={van den Berg, Benno},
  author={Sanders, Sam},
  title={Transfer equals Comprehension},
  journal={Submitted},
  volume={},
  date={2014},
  number={},
  note={Available on arXiv: \url {http://arxiv.org/abs/1409.6881}},
  pages={},
}

\bib{briebenno}{article}{
  author={van den Berg, Benno},
  author={Briseid, Eyvind},
  title={Weak systems for nonstandard arithmetic},
  journal={In preparation},
}

\bib{ishberg}{article}{
  author={Berger, Josef},
  author={Ishihara, Hajime},
  title={Brouwer's fan theorem and unique existence in constructive analysis},
  journal={MLQ Math. Log. Q.},
  volume={51},
  date={2005},
  number={4},
  pages={360--364},
}

\bib{bergolijf2}{article}{
  author={Berger, Ulrich},
  author={Oliva, Paulo},
  title={Modified bar recursion and classical dependent choice},
  conference={ title={Logic Colloquium '01}, },
  book={ series={Lect. Notes Log.}, volume={20}, },
  date={2005},
  pages={89--107},
}

\bib{bergolijf}{article}{
  author={Berger, Ulrich},
  author={Oliva, Paulo},
  title={Modified bar recursion},
  journal={Math. Structures Comput. Sci.},
  volume={16},
  date={2006},
  number={2},
  pages={163--183},
}

\bib{bish1}{book}{
  author={Bishop, Errett},
  title={Foundations of constructive analysis},
  publisher={McGraw-Hill Book Co.},
  place={New York},
  date={1967},
  pages={xiii+370},
}

\bib{brich}{book}{
  author={Bridges, Douglas},
  author={Richman, Fred},
  title={Varieties of constructive mathematics},
  series={London Mathematical Society Lecture Note Series},
  volume={97},
  publisher={Cambridge University Press},
  place={Cambridge},
  date={1987},
  pages={x+149},
}

\bib{brouw}{book}{
  author={Brouwer, L. E. J.},
  title={Collected works. Vol. 1},
  note={Philosophy and foundations of mathematics; Edited by A. Heyting},
  publisher={North-Holland Publishing Co.},
  place={Amsterdam},
  date={1975},
  pages={xv+628},
}

\bib{feferman2}{article}{
  author={Feferman, Solomon},
  title={A language and axioms for explicit mathematics},
  conference={ title={Algebra and logic}, },
  book={ publisher={Springer}, },
  date={1975},
  pages={87--139. LNM 450},
}

\bib{feferman3}{article}{
  author={Feferman, Solomon},
  title={{\it Lieber Herr Bernays! Lieber Herr G\"odel!} G\"odel on finitism, constructivity, and Hilbert's program},
  conference={ title={Kurt G\"odel and the foundations of mathematics}, },
  book={ publisher={CUP}, },
  date={2011},
  pages={111--133},
}

\bib{fefermaninf}{book}{
  author={Feferman, Solomon},
  author={J\"ager, Gerhard},
  author={Strahm, Thomas},
  title={Foundations of Explicit Mathematics},
  note={In progress},
}

\bib{feferman4}{article}{
  author={Feferman, Solomon},
  title={Constructive theories of functions and classes},
  conference={ title={Logic Colloquium '78}, },
  book={ series={Stud. Logic Foundations Math.}, volume={97}, publisher={North-Holland}, place={Amsterdam}, },
  date={1979},
  pages={159--224},
}

\bib{fried}{article}{
  author={Friedman, Harvey},
  title={Some systems of second order arithmetic and their use},
  conference={ title={Proceedings of the International Congress of Mathematicians (Vancouver, B.\ C., 1974), Vol.\ 1}, },
  book={ },
  date={1975},
  pages={235--242},
}

\bib{fried2}{article}{
  author={Friedman, Harvey},
  title={ Systems of second order arithmetic with restricted induction, I \& II (Abstracts) },
  journal={Journal of Symbolic Logic},
  volume={41},
  date={1976},
  pages={557--559},
}

\bib{fuji1}{article}{
   author={Fujiwara, Makoto},
   author={Kohlenbach, Ulrich},
   title={Classical provability of uniform versions and intuitionistic provability},
   journal={Mathematical Logic Quarterly},
   date={2015},
   pages={To appear},
}

\bib{fuji2}{article}{
   author={Fujiwara, Makoto},
   title={Intuitionistic and uniform provability in reverse mathematics},
   journal={PhD thesis, Mathematical Institute, Tohoku University, Sendai},
   date={2015},
   pages={To appear},
}

\bib{gandymahat}{article}{
  author={Gandy, Robin},
  author={Hyland, Martin},
  title={Computable and recursively countable functions of higher type},
  conference={ },
  book={ publisher={North-Holland}, },
  date={1977},
  pages={407--438. Studies in Logic and Found. Math 87},
}

\bib{gako}{article}{
  author={Gaspar, Jaime},
  author={Kohlenbach, Ulrich},
  title={On Tao's ``finitary'' infinite pigeonhole principle},
  journal={J. Symbolic Logic},
  volume={75},
  date={2010},
  number={1},
  pages={355--371},
}

\bib{schirfeld}{article}{
  author={Hirschfeldt, Denis R.},
  author={Shore, Richard A.},
  title={Combinatorial principles weaker than Ramsey's theorem for pairs},
  journal={J. Symbolic Logic},
  volume={72},
  date={2007},
  number={1},
  pages={171--206},
}

\bib{polarhirst}{article}{
  author={Hirst, Jeffry L.},
  title={Representations of reals in reverse mathematics},
  journal={Bull. Pol. Acad. Sci. Math.},
  volume={55},
  date={2007},
  number={4},
  pages={303--316},
}

\bib{aveirohrbacek}{article}{
  author={Hrbacek, Karel},
  title={Stratified analysis?},
  conference={ title={The strength of nonstandard analysis}, },
  book={ publisher={Springer}, },
  date={2007},
  pages={47--63},
}

\bib{hrbacek3}{article}{
  author={Hrbacek, Karel},
  title={Relative Set Theory: Internal View},
  journal={J. Log. Anal.},
  volume={1},
  date={2009},
  pages={Paper 8, pp.\ 108},
  issn={1759-9008},
}

\bib{hrbacek4}{article}{
  author={Hrbacek, Karel},
  author={Lessmann, Olivier},
  author={O'Donovan, Richard},
  title={Analysis with ultrasmall numbers},
  journal={Amer. Math. Monthly},
  volume={117},
  date={2010},
  number={9},
  pages={801--816},
}

\bib{hrbacek5}{article}{
  author={Hrbacek, Karel},
  title={Relative Set Theory: Some external issues},
  journal={J. Log. Anal.},
  volume={2},
  date={2010},
  pages={pp.\ 37},
}

\bib{hunterphd}{book}{
   author={Hunter, James},
   title={Higher-order reverse topology},
   note={Thesis (Ph.D.)--The University of Wisconsin - Madison},
   publisher={ProQuest LLC, Ann Arbor, MI},
   date={2008},
   pages={97},
}

\bib{kaye}{book}{
  author={Kaye, Richard},
  title={Models of Peano arithmetic},
  series={Oxford Logic Guides},
  volume={15},
  publisher={The Clarendon Press},
  date={1991},
  pages={x+292},
}

\bib{keisler1}{article}{
  author={Keisler, H. Jerome},
  title={Nonstandard arithmetic and reverse mathematics},
  journal={Bull. Symb.\ Logic},
  volume={12},
  date={2006},
  pages={100--125},
}

\bib{kohlenbach3}{book}{
  author={Kohlenbach, Ulrich},
  title={Applied proof theory: proof interpretations and their use in mathematics},
  series={Springer Monographs in Mathematics},
  publisher={Springer-Verlag},
  place={Berlin},
  date={2008},
  pages={xx+532},
}

\bib{kohlenbach2}{article}{
  author={Kohlenbach, Ulrich},
  title={Higher order reverse mathematics},
  conference={ title={Reverse mathematics 2001}, },
  book={ series={Lect. Notes Log.}, volume={21}, publisher={ASL}, },
  date={2005},
  pages={281--295},
}

\bib{kohlenbach4}{article}{
  author={Kohlenbach, Ulrich},
  title={Foundational and mathematical uses of higher types},
  conference={ title={Reflections on the foundations of mathematics (Stanford, CA, 1998)}, },
  book={ series={Lect. Notes Log.}, volume={15}, publisher={ASL}, },
  date={2002},
  pages={92--116},
}

\bib{kohlenbach8}{article}{
  author={Kohlenbach, Ulrich},
  title={Mathematically strong subsystems of analysis with low rate of growth of provably recursive functionals},
  journal={Arch. Math. Logic},
  volume={36},
  date={1996},
  number={1},
  pages={31--71},
}

\bib{kohlenbach7}{article}{
  author={Kohlenbach, Ulrich},
  title={A logical uniform boundedness principle for abstract metric and hyperbolic spaces},
  conference={ title={Proceedings of WoLLIC 2006}, },
  book={ series={ETCS}, volume={165}, },
  date={2006},
  pages={81--93},
}

\bib{kohlenbach6}{article}{
  author={Kohlenbach, Ulrich},
  title={The use of a logical principle of uniform boundedness in analysis},
  conference={ title={Logic and foundations of mathematics}, address={Florence}, date={1995}, },
  book={ series={Synthese Lib.}, volume={280}, publisher={Kluwer}, },
  date={1999},
  pages={93--106},
}

\bib{kohlenbach5}{article}{
  author={Kohlenbach, Ulrich},
  title={On the arithmetical content of restricted forms of comprehension, choice and general uniform boundedness},
  journal={Ann. Pure Appl. Logic},
  volume={95},
  date={1998},
  number={1-3},
  pages={257--285},
}

\bib{kohlenbachearly}{article}{
  author={Kohlenbach, Ulrich},
  title={Analysing proofs in analysis},
  conference={ title={Logic: from foundations to applications}, address={Staffordshire}, date={1993}, },
  book={ series={Oxford Sci. Publ.}, publisher={Oxford Univ. Press}, place={New York}, },
  date={1996},
  pages={225--260},
}

\bib{montahue}{article}{
  author={Montalb{\'a}n, Antonio},
  title={Open questions in reverse mathematics},
  journal={Bull. Symbolic Logic},
  volume={17},
  date={2011},
  number={3},
  pages={431--454},
}

\bib{noortje}{book}{
  author={Normann, Dag},
  title={Recursion on the countable functionals},
  series={LNM 811},
  volume={811},
  publisher={Springer},
  date={1980},
  pages={viii+191},
}

\bib{jadagjan}{article}{
  author={Normann, Dag},
  title={Characterizing the continuous functionals},
  journal={J. Symbolic Logic},
  volume={48},
  date={1983},
  number={4},
  pages={965--969 (1984)},
}

\bib{wownelly}{article}{
  author={Nelson, Edward},
  title={Internal set theory: a new approach to nonstandard analysis},
  journal={Bull. Amer. Math. Soc.},
  volume={83},
  date={1977},
  number={6},
  pages={1165--1198},
}

\bib{peraire}{article}{
  author={P{\'e}raire, Yves},
  title={Th\'eorie relative des ensembles internes},
  language={French},
  journal={Osaka J. Math.},
  volume={29},
  date={1992},
  number={2},
  pages={267--297},
}

\bib{rollander}{article}{
  author={Rathjen, Michael},
  title={Constructive set theory and Brouwerian principles},
  journal={J.UCS},
  volume={11},
  date={2005},
}

\bib{robinson1}{book}{
  author={Robinson, Abraham},
  title={Non-standard analysis},
  publisher={North-Holland},
  place={Amsterdam},
  date={1966},
  pages={xi+293},
}

\bib{aloneatlast3}{article}{
  author={Sanders, Sam},
  title={\textup {ERNA} and {F}riedman's {R}everse {M}athematics},
  year={2011},
  journal={J.\ of Symb.\ Logic},
  volume={76},
  pages={637-664},
}

\bib{sayo}{article}{
  author={Sanders, Sam},
  author={Yokoyama, Keita},
  title={The {D}irac delta function in two settings of {R}everse {M}athematics},
  year={2012},
  journal={Archive for Mathematical Logic},
  volume={51},
  number={1},
  pages={99-121},
}

\bib{tale}{article}{
  author={Sanders, Sam},
  title={A tale of three Reverse Mathematics},
  year={2014},
  number={},
  journal={Submitted, Available in arXiv: \url {X}.},
  volume={},
  pages={},
}

\bib{samzoo}{article}{
  author={Sanders, Sam},
  title={Taming the Reverse Mathematics zoo},
  year={2015},
  journal={Submitted, Available from arXiv: \url {http://arxiv.org/abs/1412.2022}},
}

\bib{firstHORM}{article}{
  author={Sanders, Sam},
  title={Uniform and nonstandard existence in Reverse Mathematics},
  year={2015},
  journal={Submitted, Available from arXiv: \url {X}},
}

\bib{samimplicit}{article}{
  author={Sanders, Sam},
  title={More than bargained for in Reverse Mathematics},
  year={2015},
  journal={Submitted, Available from arXiv: \url {X}},
}

\bib{sambar}{article}{
  author={Sanders, Sam},
  title={On a hitherto unexplored nonstandard extension of the finitist viewpoint},
  year={2014},
  journal={Submitted, Available from arXiv: \url {X}},
}

\bib{simpson1}{collection}{
  title={Reverse mathematics 2001},
  series={LNL},
  volume={21},
  editor={Simpson, Stephen G.},
  publisher={ASL},
  date={2005},
  pages={x+401},
}

\bib{simpson2}{book}{
  author={Simpson, Stephen G.},
  title={Subsystems of second order arithmetic},
  series={Perspectives in Logic},
  edition={2},
  publisher={CUP},
  date={2009},
  pages={xvi+444},
}

\bib{stroyan}{book}{
  author={Stroyan, Keith D.},
  author={Luxemburg, W. A.J.},
  title={Introduction to the theory of infinitesimals},
  year={1976},
  publisher={Academic Press},
}

\bib{tanaka1}{article}{
  author={Tanaka, Kazuyuki},
  title={The self-embedding theorem of $\WKL _{0}$ and a non-standard method},
  year={1997},
  journal={Annals of Pure and Applied Logic},
  volume={84},
  pages={41-49},
}

\bib{tahaar}{article}{
  author={Tanaka, Kazuyuki},
  author={Yamazaki, Takeshi},
  title={A non-standard construction of Haar measure and weak K\"onig's lemma},
  journal={J. Symbolic Logic},
  volume={65},
  date={2000},
  number={1},
  pages={173--186},
}

\bib{troelstra1}{book}{
  author={Troelstra, Anne Sjerp},
  title={Metamathematical investigation of intuitionistic arithmetic and analysis},
  note={Lecture Notes in Mathematics, Vol.\ 344},
  publisher={Springer Berlin},
  date={1973},
  pages={xv+485},
}

\bib{troelstra2}{article}{
  author={Troelstra, Anne Sjerp},
  title={Note on the fan theorem},
  journal={J. Symbolic Logic},
  volume={39},
  date={1974},
  pages={584--596},
}

\bib{troelstra3}{article}{
  author={Troelstra, Anne Sjerp},
  title={Some models for intuitionistic finite type arithmetic with fan functional},
  journal={J. Symbolic Logic},
  volume={42},
  date={1977},
  pages={194--202},
}

\bib{vajuju}{book}{
  author={van Heijenoort, Jean},
  title={From Frege to G\"odel. A source book in mathematical logic, 1879--1931},
  publisher={Harvard University Press},
  place={Cambridge, Mass.},
  date={1967},
  pages={xi+660 pp. (1 plate)},
}

\bib{yo1}{article}{
  author={Yokoyama, Keita},
  title={Formalizing non-standard arguments in second-order arithmetic},
  journal={J. Symbolic Logic},
  volume={75},
  date={2010},
  number={4},
  pages={1199--1210},
}

\bib{yokoyama2}{article}{
  author={Yokoyama, Keita},
  title={Non-standard analysis in ${\rm ACA}_0$ and Riemann mapping theorem},
  journal={Math. Log. Q.},
  volume={53},
  date={2007},
  number={2},
  pages={132--146},
}

\bib{yokoyama3}{book}{
  author={Yokoyama, Keita},
  title={Standard and non-standard analysis in second order arithmetic},
  series={Tohoku Mathematical Publications},
  volume={34},
  note={PhD Thesis, Tohoku University, 2007},
  place={Sendai},
  date={2009},
  pages={iv+130},
  url={http://www.math.tohoku.ac.jp/tmj/PDFofTMP/tmp34.pdf},
}

\end{biblist}
\end{bibdiv}
\bye

\bye